\documentclass[a4paper]{article}
\usepackage{amsmath,amsthm,amssymb,mathtools}
\usepackage{enumitem}
\usepackage{changepage}
\usepackage{xcolor}
\usepackage{thmtools, thm-restate}
\usepackage{cleveref}


\newcommand{\suchthat}{\mid}

\newcommand{\from}{\colon}
\newcommand{\boundary}{\partial}
\newcommand{\union}{\cup}
\newcommand{\intersection}{\cap}
\newcommand{\bigunion}{\bigcup}
\newcommand{\bigintersection}{\bigcap}
\newcommand{\composed}{\circ}

\newcommand{\naturals}{\mathbb{N}}

\newcommand{\gromov}[3]{\ensuremath{\left(#2\cdot#3\right)_{#1}}}

\newcommand{\set}[1]{\left\{#1\right\}}
\newcommand{\restricted}[1]{\vert_{#1}}
\newcommand{\abs}[1]{\left|#1\right|}

\DeclarePairedDelimiter\floor{\lfloor}{\rfloor}
\newcommand{\isom}{\cong}
\newcommand{\disjointunion}{\amalg}

\DeclareMathOperator{\Diam}{Diam}
\DeclareMathOperator{\image}{Image}
\DeclareMathOperator{\Skel}{Sk}
\DeclareMathOperator{\sd}{sd}
\DeclareMathOperator{\dist}{d}
\DeclareMathOperator{\newdist}{\widehat{d}}
\DeclareMathOperator{\vdistance}{\rho}

\newcounter{dummy}\numberwithin{dummy}{section}

\theoremstyle{plain}
\newtheorem{lemma}[dummy]{Lemma}
\newtheorem{corollary}[dummy]{Corollary}
\newtheorem{theorem}[dummy]{Theorem}
\newtheorem{proposition}[dummy]{Proposition}

\newtheorem{question}[dummy]{Question}

\theoremstyle{remark}

\newtheorem{remark}[dummy]{Remark}

\theoremstyle{definition}
\newtheorem{definition}[dummy]{Definition}

\bibliographystyle{plain}

\title{Local simple connectedness of boundaries of hyperbolic groups}
\author{Benjamin Barrett}

\begin{document}
\maketitle

\begin{abstract}
  In this paper we prove a theorem describing the local topology of the
  boundary of a hyperbolic group in terms of its global topology: the boundary
  is locally simply connected if and only if the complement of any point in the
  boundary is simply connected. This generalises a theorem of Bestvina and
  Mess~\cite{bestvinamess91}.
\end{abstract}

\section{Introduction}

If a group acts geometrically on a proper hyperbolic metric space then,
following Gromov~\cite{gromov87}, we say that the group is hyperbolic, and
quasi-isometry invariants of the space can be thought of as invariants of the
group. One particularly powerful quasi-isometry invariant that can be used in
this way is the Gromov boundary of the space. This a topological space defined
as the set of equivalence classes of geodesic rays in the space emanating from
a fixed base point, where rays are defined to be equivalent if they remain a
bounded distance apart. In this way one can define the boundary $\boundary G$
of a hyperbolic group $G$.

The boundary of a hyperbolic group is useful as a quasi-isometry invariant of
the group: if groups $G_1$ and $G_2$ are quasi-isometric then $\boundary G_1
\isom \boundary G_2$. Furthermore, certain topological properties of the
boundary translate directly into algebraic properties of the group. By a
theorem of Stallings~\cite{stallings68}, $\boundary G$ is disconnected if and
only if $G$ admits a splitting as an amalgamated product or HNN extension over
a finite subgroup. A theorem of Bowditch further characterises splittings over
virtually cyclic subgroups: if $\boundary G$ is connected then $G$ splits over
a virtually cyclic subgroup if and only if $\boundary G$ can be separated by
the removal of a pair of points. Sometimes the topology of $\boundary G$
determines $G$ even more precisely: if $\boundary G$ is homeomorphic to a
circle then the Convergence Group Theorem of Tukia, Casson, Jungreis and
Gabai~\cite{tukia88,cassonjungreis94,gabai92} tells us that $G$ contains a
surface group at finite index.

In~\cite{gromov87}, Gromov showed that the boundary of a hyperbolic group is a
compact metric space with finite topological dimension. Beyond these basic
properties, and excepting the case when $G$ is the fundamental group of a
negatively curved smooth closed manifold, in which case $\boundary G$ is a
sphere, boundaries of hyperbolic groups tend to be wild, fractal-like objects.
Concrete examples include the Menger curve~\cite{benakli92}, which is the
generic case~\cite{champetier95}, Sierpinski compacta, which arise as
boundaries of fundamental groups of compact hyperbolic manifolds with non-empty
totally geodesic boundary, the universal Menger compacta of dimensions
2~\cite{dranishnikov99} and 3~\cite{dymaraosajda07}, and Pontryagin
surfaces~\cite{dranishnikov99}. See also~\cite{davisjanuszkiewicz91} for a
flexible method for constructing interesting high-dimensional boundaries.

In~\cite{kapovichkleiner00}, Kapovich and Kleiner ask the following question.

\begin{question}\cite[Question A]{kapovichkleiner00}
  Which topological spaces are boundaries of hyperbolic groups?
\end{question}

On the one hand, finding concrete examples of spaces arising as boundaries has
proved difficult, but on the other, relatively few restrictions on the topology
of the boundary are known. One restriction of particular importance is a
theorem of Bestvina and Mess~\cite{bestvinamess91}, which states that if
$\boundary G$ is connected and cannot be disconnected by the removal of any
single point then $\boundary G$ is locally path connected. Combined with a
result of Bowditch~\cite{bowditch98b} and Swarup~\cite{swarup96}, this implies
that $\boundary G$ is locally path connected whenever it is connected.

The goal of this paper is to prove the following theorem, which restricts the
local complexity of the boundary of a hyperbolic group in terms of its global
complexity in a similar way to the theorem of Bestvina and Mess described
above.

\begin{theorem}\label{thm:main_theorem}
  Let $G$ be a one-ended hyperbolic group. Then $\boundary G$ is locally simply
  connected if and only if, for every point $\xi \in \boundary G$,  $\boundary
  G - \set\xi$ is simply connected.
\end{theorem}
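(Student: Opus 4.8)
The plan is to work throughout with the action of $G$ on $\partial G$ by homeomorphisms, together with a visual metric, and — for the harder implication — a Rips complex $P$ on which $G$ acts geometrically and its compactification $P \cup \partial G$ in the sense of Bestvina--Mess. Note first that since $G$ is one-ended, $\partial G$ is connected, and by Bestvina--Mess together with Bowditch and Swarup it is locally path connected; I will use local path connectivity freely. The two implications have very different character, so I treat them separately.

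For the implication \emph{locally simply connected $\Rightarrow$ every $\partial G - \set\xi$ simply connected} I argue by contraposition and exploit the dynamics of the action. Suppose some $\partial G - \set\xi$ is not simply connected, witnessed by a loop $\alpha$ essential in $\partial G - \set\xi$. Fix a geodesic ray $\gamma$ from the basepoint to $\xi$ and elements $g_N \in G$ tracking $\gamma(N)$. Since $\alpha$ is compact and avoids $\xi$, it lies at visual distance at least $\tau > 0$ from $\xi$; the one analytic input is the estimate that, for points on the far side of $\xi$, the Gromov product based at $\gamma(N)$ exceeds that based at the basepoint by approximately $N$, so that $g_N^{-1}$ contracts the region at distance $\geq \tau$ from $\xi$ by a factor comparable to $e^{-N}$. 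Hence $g_N^{-1}\alpha$ has diameter $O(e^{-N})$ and, after passing to a subsequence, accumulates at a point $\eta_\infty$. Because $g_N^{-1}$ is a homeomorphism it preserves essentiality, so $g_N^{-1}\alpha$ is essential in $\partial G - \set{g_N^{-1}\xi}$; as $g_N^{-1}\xi$ stays a definite distance from $\eta_\infty$, any small neighbourhood of $\eta_\infty$ avoids $g_N^{-1}\xi$, and a null-homotopy of $g_N^{-1}\alpha$ there would be a null-homotopy in $\partial G - \set{g_N^{-1}\xi}$. Thus arbitrarily small essential loops accumulate at $\eta_\infty$, contradicting local simple connectivity. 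No filling is constructed, which is why this is the soft direction.

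The reverse implication is the heart of the matter. Arguing contrapositively, a failure of local simple connectivity yields, by the same zooming run in the opposite direction, macroscopic loops $\beta_N = g_N^{-1}\alpha_N$ together with shrinking forbidden sets $S_N \to \set{\eta_\infty}$ such that each $\beta_N$ is essential in $\partial G - S_N$. One is tempted to take a uniform limit $\beta_N \to \beta_\infty$ and declare $\beta_\infty$ essential in $\partial G - \set{\eta_\infty}$, and identifying why this fails is the crux: essentiality is \emph{not} stable under uniform limits, and passing from ``$\beta_N$ close to $\beta_\infty$'' to ``$\beta_N$ homotopic to $\beta_\infty$'' already presupposes the local simple connectivity we are trying to prove. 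The limiting must therefore be performed not on maps into $\partial G$ but on combinatorial fillings in $P$, where a bound on the size of a filling disk is a closed condition and limits can be extracted by a K\"onig's lemma argument.

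To carry this out I would first set up a dictionary, in the spirit of Bestvina--Mess but at the level of $\pi_1$, between loops in $\partial G$ and combinatorial loops lying far out in $P$: using local path connectivity, a loop in $\partial G$ is coherently approximated by loops in the large spheres of $P$, and a null-homotopy in $\partial G$ corresponds to a coherent family of simplicial disk-fillings whose radius stays bounded below. Under this dictionary, simple connectivity of $\partial G - \set\xi$ becomes simple connectivity at infinity of the complement of a horoball about $\xi$, while the coarse form of local simple connectivity becomes the statement that small loops at radius $\geq s$ bound disks of bounded size at radius $\geq r$, with $s$ depending on $r$ alone. The $G$-action interchanges ``deep in the direction of $\xi$'' with ``near the basepoint'', converting the horoball-complement fillings supplied by the hypothesis into the ball-complement fillings demanded by local simple connectivity; cocompactness reduces the construction to finitely many local patterns. \textbf{The main obstacle} is then to extract a single function $s(r)$ uniform over all directions $\xi$ simultaneously, which I expect to require a compactness (K\"onig) argument over the locally finite complex, organised if necessary by a spectral sequence computing the relevant coarse cohomology; this uniform diameter control of fillings should be the principal technical difficulty of the whole proof.
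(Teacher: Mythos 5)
Your overall architecture is the same as the paper's: the ``soft'' direction is exactly the paper's argument (conical-limit-point dynamics: translates $g_n$ shrink a compact set missing $\xi$ towards one point while pushing $\xi$ towards another, so essentiality can be transported between $\boundary G - \set\xi$ and small neighbourhoods), and for the hard direction you correctly identify both the central obstruction (essentiality of loops is not closed under uniform limits, so the limiting argument must be run on combinatorial fillings in the Rips complex rather than on maps into $\boundary G$) and the right coarse objects (fillings in sphere complexes far out in $K$, horoballs about $\xi$, translation by the group action, compactness from local finiteness). The paper's condition $\S_M$ is precisely your ``small loops at radius $\geq s$ bound disks of bounded size,'' and the paper obtains it from simple connectedness of point complements by the translation-and-subsequence argument you sketch, using projection maps from $\boundary G - \set\xi$ onto a horoball subcomplex.

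The genuine gap is in the half of your ``dictionary'' that you assert rather than construct: the passage \emph{from} a coherent family of bounded simplicial disk-fillings at every radius \emph{back to} an actual null-homotopy in $\boundary G$. This is not routine and is where most of the paper's work lies. One must build approximate sections $i^n_\infty\from\Skel_2 K_n\to\boundary G$ of the projections $p^\infty_n$ by composing the filling maps $K_n\to K_{n+1}$ infinitely often, prove via Gromov-product estimates that the composites converge to a continuous map whose restriction to each simplex has visual diameter $O(a^{-n})$, and then assemble an infinite concatenation of homotopies (with geometrically decaying tracks) joining a given small loop $\gamma$ to $i^n_\infty\composed f$ for a simplicial approximation $f$ of $p^\infty_n\composed\gamma$, and thence to a constant. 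Without this, ``failure of local simple connectivity implies failure of the bounded-filling condition'' -- the step your contrapositive needs -- is unproved: a small essential loop might still admit bounded combinatorial fillings at every scale if those fillings cannot be coherently realised in $\boundary G$. Relatedly, you have mislocated the principal difficulty: the uniformity of $s(r)$ over all directions $\xi$ comes cheaply from local finiteness and translation (and no coarse-cohomology spectral sequence is needed), whereas the diameter-controlled realisation of fillings as homotopies in $\boundary G$, together with the one-dimensional input $\ddag'_M$ needed to even define the maps on 1-skeleta coherently, is the real content.
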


The paper is structured as follows. In Section~\ref{sec:spheres} we use the
Rips complex of the group $G$ to build a sequence $(K_n)_{n \geq 0}$ of
simplicial complexes that give successively better approximations to the
boundary $\boundary G$. These simplicial complexes are spheres in the Rips
complex. There exist natural simplicial maps $p^n_m\from K_n\to K_m$ for $n \gg
m$, and there exist natural continuous maps $p^\infty_n\from\boundary G \to
K_n$ for all $n$.  The remainder of Section~\ref{sec:spheres} will be spent
proving quantitative statements about these maps.

In Section~\ref{sec:horoball} we prove a compactness result: given a sequence
of vertices $v_n \in K_n$, there exist group elements $g_n$ and a subcomplex
$H$ of the Rips complex such that translation by $g_n$ maps a large
neighbourhood in $K_n$ of $v_n$ onto a large neighbourhood in $H$ of $g_n\cdot
v_n$.  We may further assume that $(g_n)$ converges to a point $\xi \in
\boundary G$, and then we think of $H$ as a horoball in $G$ asymptotic to
$\xi$. We then spend the remainder of Section~\ref{sec:horoball} describing
projection maps $\boundary G - \set\xi \to H$. This relates the topology of
$\boundary G - \set\xi$ to that of $H$, and hence to the topology of large
balls in $K_n$ for $n$ sufficiently large.

In Section~\ref{sec:double_dagger} we begin by recalling the so-called
double-dagger condition $\ddag_M$ of Bestvina and Mess. This condition is
guaranteed to hold if $\boundary G$ does not contain a cut point, and implies
that $\boundary G$ is locally path connected. We then modify the condition to
allow for easier generalisation, and introduce a new condition $\S_M$, which we
will see to be related the local simple connectedness of $\boundary G$. Using
the compactness results from Section~\ref{sec:horoball} we show that this
condition is satisfied if $\boundary G - \set\xi$ is simply connected for any
$\xi$ in $\boundary G$.

Then, in Section~\ref{sec:local_simple_connectedness} we show that our new
condition $\S_M$ implies the local simple connectedness of $\boundary G$. We
begin by using $\S_M$ to construct a sequence of maps $(i^n_\infty)$ from the
2-skeleton of $K_n$ to $\boundary G$. These maps can be thought of as
approximate sections of $p^\infty_n$, in the following sense. For each $n$, let
$r_n\from K_n \to \Skel_2 K_n$ be a (not necessarily continuous) map sending
points to nearby points in the 2-skeleton. Then $i^n_\infty\composed r_n
\composed p^\infty_n$ converges uniformly to the identity map $\boundary G \to
\boundary G$ as $n$ tends to $\infty$. By controlling the diameter of the image
under $i^n_\infty$ of a simplex in $K_n$ we are able to prove that $\boundary
G$ is locally simply connected.

Finally, in Section~\ref{sec:conclusion} we put the results of the previous
sections together to prove Theorem~\ref{thm:main_theorem}, and conclude the
paper with some questions about strengthenings and generalisations.

\subsection*{Acknowledgements}

I am grateful to Henry Wilton for pointing out the proof of
Proposition~\ref{prop:converse_theorem}, and for other helpful comments.  I am
also grateful to John Mackay and Panos Papazoglou for helpful discussions on
this topic.

\section{The Rips complex and the Gromov boundary}\label{sec:spheres}

\subsection{Hyperbolic groups and the Rips complex}

Let $G$ be a hyperbolic group and let $S$ be a symmetric generating set for
$G$. Let $e$ be the identity element of $G$, which we use as a base point. Let
$\dist(\cdot, \cdot)$ be the length metric on the Cayley graph of $G$ with
respect to $S$ obtained by setting all edges to have length $1$; on $G$ this
metric agrees with the word metric. 

We call an isometric embedding of an interval into the Cayley graph a
\emph{geodesic}. If the interval is $(-\infty, \infty)$ we call the geodesic a
\emph{bi-infinite geodesic}; if the interval is $[0, \infty)$ we call it a
\emph{geodesic ray.} A bi-infinite geodesic has two ideal end points in
$\boundary G$, while a geodesic ray has one end point in $G$ and one ideal end
point in $\boundary G$.  Given points $x$ and $y$ in $G \union \boundary G$ we
will often denote by $[x,y]$ a geodesic from $x$ to $y$, thought of as a subset
of the Cayley graph together with its boundary. Geodesics in hyperbolic spaces
are not necessarily unique, so this notation will stand for a choice of
geodesic, but the choice will not matter.

Let $\delta \geq 0$ be large enough to ensure that the Cayley graph of $G$ with
respect to $S$ is $\delta$-hyperbolic, meaning that all geodesic triangles in
the Cayley graph of $G$ are $\delta$-slim. (This means that each edge of each
triangle is contained in the $\delta$-neighbourhood of the union of the other
two edges.) Let $\boundary G$ be the boundary of the group, which can be
identified with the Gromov boundary of the Cayley graph of $G$. To reduce the
growth of constants later on we in fact assume that all \emph{ideal} triangles
(with vertices in $G \union \boundary G$) are $\delta$-hyperbolic; this can be
guaranteed by multiplying $\delta$ by $4$.  

Throughout this and subsequent sections we use~\cite{bridsonhaefliger99} as a
reference for fundamental properties of hyperbolic spaces. The reader is
referred to~\cite{benaklikapovich00} for a useful survey on boundaries of
hyperbolic groups.

\begin{definition}
  Let $D = 10^6\delta + 10^6$.  We denote by $K$ the \emph{Rips complex} of
  $G$; this is the simplicial complex with vertex set $G$ such that a finite
  set of elements of $G$ spans a simplex if and only if that set has diameter
  at most $D$ with respect to the word metric $d$.
\end{definition}

We will sometimes want to refer to distances between points in $K$. The
precise means by which we extend the metric on $G$ to $K$ will not matter
very much, so we make the following simple definition.

\begin{definition}
  For $x$ and $y$ in $K$, choose vertices $x^\star$ and $y^\star$ of the
  minimal simplices containing $x$ and $y$ respectively. Then define
  $\newdist(x,y) = \dist(x^\star, y^\star)$.  
\end{definition}

\begin{remark}\label{rem:dist_vs_newdist}
  The extended ``metric'' $\newdist$ is not a metric, since it only satisfies
  the triangle inequality up to a bounded additive error, but this will not
  cause a problem.

  Notice that $\newdist$ agrees with $\dist$ on $G$, and also that the choice
  of vertices $x^\star$ and $y\star$ in the definition does not matter very
  much: for any $x$ and $y$ in $K$, if $x^\star$ and $y^\star$ are an
  alternative choice of vertices of simplices containing $x$ and $y$
  respectively, then
  \begin{equation*}
    \abs{\newdist(x,y) - \dist(x^\star, y^\star)} \leq 2D
  \end{equation*}
\end{remark}

We denote by $\gromov{a}{b}{c}$ the Gromov product of $b$ and $c$ in $G \union
\boundary G$ with respect to base point $a \in G$. Using $\newdist$ we extend
the Gromov product to $K$:

\begin{definition}
  Let $x \in K$ and let $y$ and $z$ be points in $K \union \boundary G$. Let
  $x^\star$ be a vertex of the minimal simplex of $K$ containing $x$. If $y$
  (respectively $z$) is in $\boundary G$ let $y^\star = y$ (respectively
  $z^\star = z$); otherwise let $y^\star$ (respectively $z^\star$) be a vertex
  of the minimal simplex of $K$ containing $y$ (respectively $z$.)
  Then define the \emph{Gromov product $\gromov{x}{y}{z}$ of $x$, $y$ and $z$}
  as $\gromov{x}{y}{z} = \gromov{x^\star}{y^\star}{z^\star}$.
\end{definition}

We now equip $K \union \boundary G$ with a topology as follows.

\begin{definition}
  We define a topology on $K \union \boundary G$ by describing a neighbourhood
  basis of each point.
  \begin{enumerate}
    \item For $x \in K$, the neighbourhoods of $x$ are the neighbourhoods of
      $x$ in $K$, treating $K$ with the usual topology of a simplicial complex.
    \item For $\xi \in \boundary G$, the set
      \begin{equation*}
        \set{x \in K \union \boundary G 
              \suchthat \gromov{e}{x}{\xi} \geq N}_{N\in\naturals}
      \end{equation*}
      is a fundamental system of neighbourhoods for the point $x$.
  \end{enumerate}
\end{definition}

Finally, we endow $\boundary G$ with a visual metric in the usual way.

\begin{definition}
  Let $\rho$ be a visual metric on $\boundary G$, so that there exist constants
  $k_1 \in (0,1]$, $k_2 \geq 1$ and $a > 1$ such that for any $\xi_1$ and
  $\xi_2$ in $\boundary G$,
  \begin{equation*}
    \rho(\xi_1, \xi_2) \in \left[k_1 a^{-\gromov{e}{\xi_1}{\xi_2}}, k_2
          a^{-\gromov{e}{\xi_1}{\xi_2}}\right]
  \end{equation*}
\end{definition}

\subsection{An inverse system of complexes}

We now define a sequence of subcomplexes in $K$. We will use the topology of
these complexes to approximate the topology of $\boundary G$. 

\begin{definition}
  For $n \in \naturals$ let $S_n$ be the sphere in $G$ of radius $n$, i.e.\ the 
  set $\set{g \in G \suchthat \dist(e, g) = n}$ of elements of $G$ whose 
  distance from the identity element is $n$. 

  Let $K_n$ be the full subcomplex of the Rips complex $K$ with vertex set
  $S_n$.
\end{definition}

\begin{definition}
  Fix an ordering on $S$. Let $S^\star$ be the free monoid on 
  $S$; order $S^\star$ by the lex-least ordering: this ordering is defined by 
  requiring that shorter words precede longer and that words of equal length are 
  ordered lexicographically. 

  For $n \geq m$ define a map $p^n_m \from S_n \to S_m$ as follows.
  Given an element $g$ of $S_n$ let $w_g \in S^\star$ be the lex-least
  representative of $g$; note that this word in $S^\star$ necessarily has
  length $n$.  Then define $p^n_m(g)$ to be the element of $G$ given by
  truncating $w_g$ to a word of length $m$.
\end{definition}

\begin{lemma}\label{lem:psimplicial}
  For $n - m > D + \delta$, the map $p^n_m$ defines a simplicial map from
  $K_n \to K_m$.
\end{lemma}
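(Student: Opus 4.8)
The plan is to reduce the claim to a single bound on pairwise distances and then read off that bound from the slimness of an appropriate geodesic triangle. Recall that a finite subset of $S_m$ spans a simplex of $K_m$ precisely when its diameter is at most $D$, and that the diameter of a set is the maximum of its pairwise distances. So to see that the vertex map $p^n_m$ extends to a simplicial map it suffices to show that it does not expand pairwise distances beyond $D$; concretely, I would prove that if $g,h\in S_n$ satisfy $\dist(g,h)\le D$ then $\dist(p^n_m(g),p^n_m(h))\le 2\delta$. Granting this, if $\sigma$ is a simplex of $K_n$ with vertex set $\set{g_0,\dots,g_k}\subseteq S_n$, then $\Diam\set{g_0,\dots,g_k}\le D$, every pair $p^n_m(g_i),p^n_m(g_j)$ is at distance at most $2\delta\le D$, and hence $\set{p^n_m(g_0),\dots,p^n_m(g_k)}$ spans a simplex of $K_m$, as required.

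To establish the pairwise bound, fix $g,h\in S_n$ with $\dist(g,h)\le D$ and write $g'=p^n_m(g)$, $h'=p^n_m(h)$. By construction $g'$ is the point at distance $m$ from $e$ along the geodesic traced out by the lex-least word $w_g$ (a prefix of a geodesic word being again geodesic, so indeed $\dist(e,g')=m$ and $g'\in S_m$), and similarly for $h'$; in particular $\dist(g,g')=n-m$. I would then form the geodesic triangle on the vertices $e,g,h$ whose sides $[e,g]$ and $[e,h]$ are the geodesics determined by $w_g$ and $w_h$, with $[g,h]$ any geodesic. This triangle is $\delta$-slim, so $g'$ lies in the $\delta$-neighbourhood of $[e,h]\cup[g,h]$.

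The decisive step, and the one where the hypothesis $n-m>D+\delta$ is used, is to rule out the possibility that $g'$ is $\delta$-close to the short side $[g,h]$. Every point of $[g,h]$ lies within $\dist(g,h)\le D$ of $g$, so if $g'$ were within $\delta$ of such a point we would get $\dist(g,g')\le D+\delta$, contradicting $\dist(g,g')=n-m>D+\delta$. Hence $g'$ is within $\delta$ of some point $q$ on $[e,h]$. From $\abs{\dist(e,q)-\dist(e,g')}\le\dist(q,g')\le\delta$ the point $q$ lies at distance within $\delta$ of $m$ from $e$ along $[e,h]$, while $h'$ lies at distance exactly $m$ from $e$ along the same geodesic; since $q$ and $h'$ lie on a common geodesic issuing from $e$, their distance is the difference of their distances from $e$, so $\dist(q,h')\le\delta$. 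Combining the two estimates gives $\dist(g',h')\le\dist(g',q)+\dist(q,h')\le 2\delta$, completing the argument.

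The only genuine obstacle is this side-selection: one must guarantee that the nearest side to the deep point $g'$ is $[e,h]$ rather than the far side $[g,h]$, and that is exactly what the gap $n-m>D+\delta$ ensures in the presence of the diameter bound $\dist(g,h)\le D$. Everything else is bookkeeping with the triangle inequality and the observation that distances between points on a single geodesic from $e$ are differences of their distances from $e$.
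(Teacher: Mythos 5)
Your proof is correct and follows essentially the same route as the paper's: reduce to a pairwise distance bound, use $\dist(g,p^n_m(g))=n-m>D+\delta$ to rule out the truncated point being $\delta$-close to the short side $[g,h]$ of the triangle $e,g,h$, and then compare the two truncations along $[e,h]$ using the fact that both lie at distance $m$ from $e$, yielding the bound $2\delta\leq D$. The only difference is cosmetic (you fix the sides $[e,g]$, $[e,h]$ to be the lex-least geodesics at the outset, whereas the paper invokes slimness for an arbitrary triangle and then chooses $[e,g_j]$ through $p^n_m(g_j)$), so no further comment is needed.
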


\begin{proof}
  Let $\set{g_1, \dotsc, g_k}$ span a simplex in $K_n$.  Then for $i$ and
  $j$ in $\set{1, \dotsc, k}$, the distance from $p^n_m(g_i)$ to any
  geodesic from $g_i$ to $g_j$ is at least $n - m - D$, which is greater than
  $\delta$, so by hyperbolicity of a triangle with vertices $e$, $g_i$ and $g_j$,
  the distance from $p^n_m(g_i)$ to any geodesic from $e$ to $g_j$ is at most
  $\delta$. We may choose this geodesic to contain $p^n_m(g_j)$, and $\dist(e,
  p^n_m(g_i)) = \dist(e, p^n_m(g_j))$, so $\dist(p^n_m(g_i), p^n_m(g_j)) \leq
  2\delta \leq D$.  It follows that the diameter of $\set{p^n_m(g_1), \dotsc,
  p^n_m(g_k)}$ is at most $D$ and therefore the set spans a simplex in $K_m$.
\end{proof}

\begin{remark}
  Note that $p^m_l \composed p^n_m = p^n_l$ whenever these maps are defined, and 
  therefore $(\set{K_n}_n, \{p^n_m\}_{n - m > D + \delta})$ is an inverse system 
  of simplicial complexes.
\end{remark}

\subsection{Projecting from $\boundary G$}

In order to define projections from $\boundary G$ into our system $(K_n)$, we
first describe a sequence of covers of $\boundary G$.

\begin{definition}
  For $x \in S_n$ let $U_n(x)$ be the set of limit points in $\boundary G$ of 
  geodesic rays $\gamma$ with $\gamma(0) = e$ and $\dist(\gamma(n), x) \leq 
  2\delta+1$.  
  
  By~\cite[III.H.3.6]{bridsonhaefliger99} $U_n(x)$ is a (not necessarily open)
  neighbourhood in $\boundary G$ of the set of limit points in $\boundary G$ of
  geodesic rays $\gamma$ with $\gamma(0) = e$ and $\gamma(n) = x$.
\end{definition}

\begin{lemma}\label{lem:L_Ksimplicial}
  The nerve of $\{U_n(x)\}_{x \in S_n}$ is a subcomplex of $K_n$.
\end{lemma}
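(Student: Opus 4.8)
The plan is to verify the two conditions defining a subcomplex: that every vertex of the nerve is a vertex of $K_n$, and that every simplex of the nerve is a simplex of $K_n$. The first is immediate once we identify the vertex of the nerve corresponding to a nonempty set $U_n(x)$ with the vertex $x \in S_n$ of $K_n$. For the second, recall that a finite collection $\set{U_n(x_1), \dotsc, U_n(x_k)}$ spans a simplex of the nerve exactly when $\bigcap_i U_n(x_i)$ is nonempty; so, by the definition of the Rips complex, it suffices to show that whenever this intersection contains a point $\xi$, the set $\set{x_1, \dotsc, x_k}$ has diameter at most $D$.

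To establish this, I would fix $\xi \in \bigcap_i U_n(x_i)$ and, for each $i$, use the definition of $U_n(x_i)$ to choose a geodesic ray $\gamma_i$ with $\gamma_i(0) = e$, limit point $\xi$, and $\dist(\gamma_i(n), x_i) \leq 2\delta + 1$. The geometric input I would invoke is that two geodesic rays issuing from a common point and converging to the same boundary point stay uniformly close in a $\delta$-hyperbolic space; concretely $\dist(\gamma_i(n), \gamma_j(n)) \leq 2\delta$ for all $i,j$ (see~\cite{bridsonhaefliger99}). The triangle inequality then gives
\[
  \dist(x_i, x_j) \leq \dist(x_i, \gamma_i(n)) + \dist(\gamma_i(n), \gamma_j(n)) + \dist(\gamma_j(n), x_j) \leq (2\delta+1) + 2\delta + (2\delta+1) = 6\delta + 2,
\]
which is well under $D = 10^6\delta + 10^6$. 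Hence $\set{x_1, \dotsc, x_k}$ has diameter at most $D$ and spans a simplex of $K_n$, as needed.

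The only step with any content is the closeness estimate for $\gamma_i$ and $\gamma_j$, so that is where I would be careful. If a self-contained argument is wanted in place of citing the standard asymptotic-rays fact, I would argue via the Gromov product: since $\gamma_i$ runs geodesically from $e$ to $\xi$, the product $\gromov{e}{\gamma_i(n)}{\xi}$ lies within a bounded multiple of $\delta$ of $n$, so the hyperbolic inequality gives $\gromov{e}{\gamma_i(n)}{\gamma_j(n)} \geq \min\!\bigl(\gromov{e}{\gamma_i(n)}{\xi}, \gromov{e}{\xi}{\gamma_j(n)}\bigr) - \delta \geq n - O(\delta)$; combining this with $\dist(e,\gamma_i(n)) = \dist(e,\gamma_j(n)) = n$ and the formula $\gromov{e}{\gamma_i(n)}{\gamma_j(n)} = n - \tfrac12\dist(\gamma_i(n),\gamma_j(n))$ bounds $\dist(\gamma_i(n),\gamma_j(n))$ by $O(\delta)$. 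Either route yields a constant far smaller than $D$, so the estimate has enormous slack and the exact constant from the rays is immaterial; no optimisation is required.
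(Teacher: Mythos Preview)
Your proposal is correct and follows essentially the same argument as the paper: pick a common boundary point $\xi$, choose geodesic rays $\gamma_i$ from $e$ to $\xi$ with $\dist(\gamma_i(n),x_i)\leq 2\delta+1$, use the $2\delta$-closeness of asymptotic rays to bound $\dist(\gamma_i(n),\gamma_j(n))$, and conclude $\Diam\{x_1,\dots,x_k\}\leq 6\delta+2\leq D$. The only difference is that you spell out the vertex identification and offer an alternative Gromov-product justification for the ray estimate, whereas the paper simply asserts it.
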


\begin{proof}
  Let $g_1, \dotsc, g_k \in S_n$ span a simplex in the nerve of
  $\{U_n(x)\}_{x \in S_n}$, so there exists a point $\xi \in
  \bigintersection_{i=1}^k U_n(g_i)$.  Let $\gamma_1, \dotsc, \gamma_n$ be
  geodesic rays with $\gamma_i(0) = e$, $\gamma_i(\infty) = \xi$ and
  $\dist(\gamma_i(n), g_i) \leq 2\delta+1$.  Then $\Diam\set{\gamma_i(n)
  \suchthat i=1, \dotsc, n} \leq 2\delta$, and it follows that
  \begin{equation*}
    \Diam\{g_1, \dotsc, g_k\} \leq 6\delta+ 2 \leq D.\qedhere
  \end{equation*}
\end{proof}

\begin{definition}
  For each $n$, fix a partition of unity subordinate to the covering
  $\set{U_n(x)}_{x \in S_n}$ of $\boundary G$. Equivalently, this is a
  continuous map from $\boundary G$ to the nerve of the covering. Denote by
  $p^\infty_n\from \boundary G \to K_n$ the composition of this map with the
  inclusion of the nerve into $K_n$, as given by Lemma~\ref{lem:L_Ksimplicial}.
\end{definition}

\begin{lemma}\label{lem:close_projections}
  Let $\xi \in \boundary G$. Then $\newdist(p^\infty_n(\xi),
  p^\infty_{n+1}(\xi)) \leq 6\delta+3 + 2D$.
\end{lemma}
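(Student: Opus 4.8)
The plan is to bound $\newdist(p^\infty_n(\xi), p^\infty_{n+1}(\xi))$ by exhibiting vertices of the minimal simplices containing each point and bounding the word-distance between them. By definition of the partition of unity, $p^\infty_n(\xi)$ lies in the simplex of $K_n$ spanned by those $x \in S_n$ with $\xi \in U_n(x)$; choose one such vertex $x_n \in S_n$, and similarly a vertex $x_{n+1} \in S_{n+1}$ with $\xi \in U_{n+1}(x_{n+1})$. By Remark~\ref{rem:dist_vs_newdist}, it suffices to bound $\dist(x_n, x_{n+1})$ up to the additive error $2D$, since $x_n$ and $x_{n+1}$ are \emph{some} vertices of the minimal simplices containing $p^\infty_n(\xi)$ and $p^\infty_{n+1}(\xi)$ respectively.

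**Next I would** unpack the membership conditions $\xi \in U_n(x_n)$ and $\xi \in U_{n+1}(x_{n+1})$. By definition there exist geodesic rays $\gamma, \gamma'$ from $e$ to $\xi$ with $\dist(\gamma(n), x_n) \leq 2\delta + 1$ and $\dist(\gamma'(n+1), x_{n+1}) \leq 2\delta + 1$. The key geometric input is that two geodesic rays from $e$ to the same ideal point $\xi$ remain uniformly close: by $\delta$-hyperbolicity (applied to the ideal triangle with vertices $e$, $\xi$ and the appropriate comparison), $\dist(\gamma(t), \gamma'(t)) \leq 2\delta$ for all $t$ where both are defined. **The main step** is then to compare $\gamma(n)$ and $\gamma(n+1)$ along a single ray, which differ by exactly one step, so $\dist(\gamma(n), \gamma(n+1)) = 1$. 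Combining these gives $\dist(x_n, x_{n+1})$ bounded by a sum: at most $(2\delta + 1) + 2\delta + 1 + (2\delta + 1) = 6\delta + 3$, tracing through $x_n \to \gamma(n) \to \gamma'(n) \to \gamma'(n+1) \to x_{n+1}$.

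**The point I expect to require the most care** is justifying the $2\delta$ bound on the distance between two rays asymptotic to the same point. This is where the paper's convention that \emph{ideal} triangles are $\delta$-slim (guaranteed by multiplying $\delta$ by $4$) is used: one applies slimness to the ideal triangle $e, \xi, \xi$ — degenerate in the sense that two vertices coincide at $\xi$ — so that each ray lies in the $\delta$-neighbourhood of the other near any given radius, and a short synchronisation argument using that both rays start at $e$ and are parametrised by arc length upgrades this to the synchronous bound $\dist(\gamma(t), \gamma'(t)) \leq 2\delta$. **Finally I would** feed $\dist(x_n, x_{n+1}) \leq 6\delta + 3$ into Remark~\ref{rem:dist_vs_newdist}, which asserts that replacing one admissible choice of vertices by another changes the estimate by at most $2D$, yielding $\newdist(p^\infty_n(\xi), p^\infty_{n+1}(\xi)) \leq 6\delta + 3 + 2D$, as claimed.
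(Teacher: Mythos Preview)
Your proposal is correct and follows essentially the same argument as the paper: pick vertices $x_n,x_{n+1}$ of simplices containing $p^\infty_n(\xi),p^\infty_{n+1}(\xi)$, use the definition of $U_n$ to produce two geodesic rays from $e$ to $\xi$, apply $\delta$-slimness of the degenerate ideal triangle to get the rays $\delta$-close, and conclude $\dist(x_n,x_{n+1})\le 6\delta+3$, then invoke \cref{rem:dist_vs_newdist} for the $+2D$. The only cosmetic difference is that the paper bounds $\dist(\gamma_1(n),\gamma_2(n+1))\le 2\delta+1$ in one step, whereas you split this as $\dist(\gamma(n),\gamma'(n))\le 2\delta$ plus the unit step $\dist(\gamma'(n),\gamma'(n+1))=1$; both routes give the same total.
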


\begin{proof}
  Choose vertices $x_1$ and $x_2$ respectively to be vertices of minimal
  simplices of $K_n$ and $K_{n+1}$ containing $p^\infty_n(\xi)$ and
  $p^\infty_{n+1}(\xi)$. Then for $i$ equal to $1$ or $2$ there is a geodesic
  ray $\gamma_i$ with $\gamma_i(0) = e$, $\gamma_i(\infty) = \xi$, and so that
  $\dist(\gamma_1(n), x_1) \leq 2\delta+1$ and $\dist(\gamma_2(n+1),
  x_2) \leq 2\delta+1$.

  Then let $y_1 = \gamma_1(n)$ and $y_2 = \gamma_2(n+1)$. By considering the
  degenerate triangle with edges $\gamma_1$ and $\gamma_2$,
  $\dist(y_1, \gamma_2) \leq \delta$, so by the triangle inequality
  $\dist(y_1, y_2) \leq 2\delta +1$. It follows that $\dist(x_1, x_2)
  \leq 6\delta+3$, and so the claimed inequality holds by
  \cref{rem:dist_vs_newdist}.
\end{proof}

\begin{proposition}\label{prop:projectstoasimplex}
  Let $U$ be a subset of $\boundary G$ with diameter $\rho_0$ with respect to
  the visual metric $\rho$. Let $n \leq \log_a(k_1/\rho_0)$. Then
  $p^\infty_n(U)$ is contained in a single simplex of $K_n$
\end{proposition}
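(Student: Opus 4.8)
The plan is to exhibit a single simplex of $K_n$ containing $p^\infty_n(U)$ and to verify that its vertex set spans a simplex by bounding its diameter. For each $\xi \in U$, the point $p^\infty_n(\xi)$ is the convex combination of those vertices $x \in S_n$ whose partition-of-unity function is nonzero at $\xi$; since the partition of unity is subordinate to $\set{U_n(x)}_{x \in S_n}$, every such $x$ satisfies $\xi \in U_n(x)$. Hence $p^\infty_n(\xi)$ lies in the simplex spanned by $V(\xi) = \set{x \in S_n \suchthat \xi \in U_n(x)}$, and it suffices to show that $V = \bigunion_{\xi \in U} V(\xi)$ has diameter at most $D$: because $K_n$ is a full subcomplex of $K$, the set $V$ then spans a simplex of $K_n$ containing $p^\infty_n(\xi)$ for every $\xi \in U$.

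To bound $\Diam V$, take $x_1, x_2 \in V$, witnessed by points $\xi_1, \xi_2 \in U$ and geodesic rays $\gamma_1, \gamma_2$ with $\gamma_i(0) = e$, $\gamma_i(\infty) = \xi_i$ and $\dist(\gamma_i(n), x_i) \leq 2\delta + 1$. The visual metric bound gives $k_1 a^{-\gromov{e}{\xi_1}{\xi_2}} \leq \rho(\xi_1, \xi_2) \leq \rho_0$, and since $a > 1$ the hypothesis $n \leq \log_a(k_1/\rho_0)$ rearranges to $\gromov{e}{\xi_1}{\xi_2} \geq n$. So everything reduces to a fellow-travelling estimate: two geodesic rays from $e$ whose endpoints have Gromov product at least $n$ must remain a bounded distance apart up to time $n$.

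For the fellow-travelling step I would work directly with Gromov products rather than slim triangles, so as to avoid the corner cases that arise when $n$ is close to $\gromov{e}{\xi_1}{\xi_2}$. Writing $\gromov{e}{\gamma_1(n)}{\gamma_2(n)} = n - \tfrac12 \dist(\gamma_1(n), \gamma_2(n))$, I would apply the $\delta$-hyperbolic four-point inequality for Gromov products twice---once to compare $\gamma_1(n)$ with a far-out point $\gamma_2(t)$, and once to pull the estimate back along $\gamma_2$ to $\gamma_2(n)$---using that $\gromov{e}{\gamma_1(s)}{\gamma_2(t)} \to \gromov{e}{\xi_1}{\xi_2} \geq n$ up to an additive error of $2\delta$ as $s, t \to \infty$ (see~\cite[III.H.3.17]{bridsonhaefliger99}). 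This yields $\gromov{e}{\gamma_1(n)}{\gamma_2(n)} \geq n - O(\delta)$, hence $\dist(\gamma_1(n), \gamma_2(n)) = O(\delta)$, and the triangle inequality gives $\dist(x_1, x_2) \leq (2\delta + 1) + O(\delta) + (2\delta + 1) \leq D$. The only real obstacle is bookkeeping: the boundary Gromov product is defined only up to $O(\delta)$ and via a liminf, so one must track the accumulated error carefully---but the enormous value of $D$ absorbs any reasonable multiple of $\delta$, so no sharpness in the constants is needed.
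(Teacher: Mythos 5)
Your proof is correct and follows essentially the same route as the paper's: reduce to bounding the word-metric diameter of the set of witnessing vertices (using that the partition of unity is subordinate to $\set{U_n(x)}$ and that $K_n$ is a full subcomplex), convert the hypothesis on $n$ into the bound $\gromov{e}{\xi_1}{\xi_2} \geq n$, and then show the two rays fellow-travel up to time $n$. The only divergence is in that last step: the paper isolates it as Lemma~\ref{lem:boundary_gromov_product} and proves it by a slim-triangles case analysis, whereas you derive it by applying the four-point inequality twice together with~\cite[III.H.3.17]{bridsonhaefliger99}; this works equally well, at the cost of a constant $\delta'$ from passing between the slim-triangle and four-point formulations of hyperbolicity (a passage the paper itself makes elsewhere via~\cite[III.H.1.22]{bridsonhaefliger99}), which the size of $D$ comfortably absorbs.
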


To prove this proposition we first prove the following lemma.

\begin{lemma}\label{lem:boundary_gromov_product}
  Let $\alpha_1$ and $\alpha_2$ be geodesic rays with $\alpha_1(0) =
  \alpha_2(0) = e$. Then for any $m_1$ and $m_2$,
  \begin{equation*}
    \gromov{e}{\alpha_1(m_1)}{\alpha_2(m_2)} \geq 
        \min\{m_1 - 3\delta, m_2 - 3\delta, \gromov{e}{\alpha_1(\infty)}{\alpha_2(\infty)} - 6\delta\}
  \end{equation*}
\end{lemma}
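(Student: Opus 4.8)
The plan is to obtain the bound by chaining the quasi-ultrametric inequality for the Gromov product through the two ideal endpoints $\xi = \alpha_1(\infty)$ and $\eta = \alpha_2(\infty)$. I will use two standard facts about $\delta$-hyperbolic spaces (see~\cite{bridsonhaefliger99}), both of which apply to arguments in $\boundary G$ thanks to our standing assumption that ideal triangles are $\delta$-slim: first, that $\gromov{e}{a}{c} \geq \min\{\gromov{e}{a}{b}, \gromov{e}{b}{c}\} - O(\delta)$ for $a$, $b$, $c$ in $G \union \boundary G$; and second, that a point at distance $m$ along a geodesic ray from $e$ has Gromov product at least $m - O(\delta)$ with the ideal endpoint of that ray.

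For the second fact I would argue as follows. Since $\alpha_1(m_1)$ and $\alpha_1(t)$ both lie on the geodesic $\alpha_1$, for $t \geq m_1$ we have $\gromov{e}{\alpha_1(m_1)}{\alpha_1(t)} = m_1$ exactly; letting $t \to \infty$ and comparing the boundary Gromov product with the finite products gives $\gromov{e}{\alpha_1(m_1)}{\xi} \geq m_1 - O(\delta)$, and symmetrically $\gromov{e}{\alpha_2(m_2)}{\eta} \geq m_2 - O(\delta)$.

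The main estimate then follows from two applications of the quasi-ultrametric inequality. Applying it with intermediate point $\eta$ gives
\[
\gromov{e}{\alpha_1(m_1)}{\alpha_2(m_2)} \geq \min\{\gromov{e}{\alpha_1(m_1)}{\eta}, \gromov{e}{\eta}{\alpha_2(m_2)}\} - O(\delta),
\]
and applying it again with intermediate point $\xi$ to the first argument gives
\[
\gromov{e}{\alpha_1(m_1)}{\eta} \geq \min\{\gromov{e}{\alpha_1(m_1)}{\xi}, \gromov{e}{\xi}{\eta}\} - O(\delta) \geq \min\{m_1 - O(\delta), \gromov{e}{\xi}{\eta}\} - O(\delta).
\]
Combining these with the second fact and collecting the nested minimum yields a lower bound of the shape $\min\{m_1 - O(\delta), m_2 - O(\delta), \gromov{e}{\xi}{\eta} - O(\delta)\}$, which is the claimed inequality.

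The only real work is in the bookkeeping of the additive constants, and this is where I expect the subtlety to lie. The naive nesting above treats $m_1$ and $m_2$ asymmetrically --- each application of the inequality and each comparison of a boundary product with its finite approximations costs a further multiple of $\delta$ --- whereas the target bound is symmetric in $m_1$ and $m_2$ with the larger error ($6\delta$) reserved for the $\gromov{e}{\xi}{\eta}$ term. To recover the sharp symmetric constants I would instead route the $m_i$ contributions through the fellow-travelling estimate that two rays from $e$ whose endpoints have Gromov product $\gromov{e}{\xi}{\eta}$ stay within $O(\delta)$ of one another up to parameter $\gromov{e}{\xi}{\eta}$, combined with the triangle inequality applied at parameter $\min\{m_1, m_2, \gromov{e}{\xi}{\eta}\}$; this makes the dependence on $m_1$ and $m_2$ manifestly symmetric. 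Tracking the constants through the specific forms of the cited inequalities from~\cite{bridsonhaefliger99}, and using the $\delta$-slimness of ideal triangles to keep the boundary estimates tight, then produces exactly the constants $3\delta$ and $6\delta$ in the statement.
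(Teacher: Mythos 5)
Your overall strategy differs from the paper's. The paper works directly with a large geodesic triangle with vertices $e$, $\alpha_1(n_1)$, $\alpha_2(n_2)$, locates points on its sides within $\delta$ of $\alpha_1(m_1)$ and $\alpha_2(m_2)$, and runs a case analysis on their configuration to extract the constants $3\delta$ and $6\delta$ by hand, paying the $2\delta$ comparison between $\gromov{e}{\alpha_1(\infty)}{\alpha_2(\infty)}$ and $\liminf\gromov{e}{\alpha_1(n_1)}{\alpha_2(n_2)}$ exactly once. Your route through the extended quasi-ultrametric inequality is legitimate and would certainly prove an inequality of the shape $\gromov{e}{\alpha_1(m_1)}{\alpha_2(m_2)} \geq \min\{m_1, m_2, \gromov{e}{\alpha_1(\infty)}{\alpha_2(\infty)}\} - c\delta$, which is all that is ever used: the lemma's one application, in Proposition~\ref{prop:projectstoasimplex}, has enormous slack because $D = 10^6\delta + 10^6$.

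The gap is that the lemma as stated, with the constants $3\delta$ and $6\delta$, is never actually proved. You correctly diagnose that nesting the ultrametric inequality loads two defects onto the $m_1$ slot and only one onto $m_2$; with a $2\delta$ defect for the extended product this gives something like $\min\{m_1 - 4\delta,\, m_2 - 2\delta,\, \gromov{e}{\alpha_1(\infty)}{\alpha_2(\infty)} - 4\delta\}$, which fails to dominate the claimed bound precisely when $m_1$ realises the minimum. Your proposed repair --- a fellow-travelling estimate for the two rays up to parameter $\gromov{e}{\alpha_1(\infty)}{\alpha_2(\infty)}$ combined with the triangle inequality at parameter $\min\{m_1, m_2, \gromov{e}{\alpha_1(\infty)}{\alpha_2(\infty)}\}$ --- is only a plan: no fellow-travelling constant is stated or derived (and such constants are sensitive to whether one uses slimness, thinness, or the four-point condition, and to the $2\delta$ loss incurred at ideal endpoints), and the closing claim that tracking the constants ``produces exactly $3\delta$ and $6\delta$'' is an assertion rather than an argument. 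Since the precise constants are the entire quantitative content of the statement, this is the step that must be written out; as it stands the proposal establishes a weaker variant of the lemma and defers the actual claim.
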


\begin{proof}
  By~\cite[III.H.3.17]{bridsonhaefliger99},
  \begin{equation*}
    \gromov{e}{\alpha_1(\infty)}{\alpha_2(\infty)} \leq \liminf_{n_1, n_2}
        \gromov{e}{\alpha_1(n_1)}{\alpha_2(n_2)} + 2\delta.
  \end{equation*}
  Let $n_1 \geq m_1$ and $n_2 \geq m_2$. 
  
  Let $p_1$ be a point on $[\alpha_1(n_1), \alpha_2(n_2)] \union \alpha_2$
  within a distance $\delta$ of $\alpha_1(m_1)$.  If $p_1$ is on $\alpha_2$
  then
  \begin{align*}
    \dist(\alpha_1(m_1), \alpha_2(m_2)) 
      & \leq \delta + \dist(p_1, \alpha_2(m_2)) \\
      & \leq 2\delta + \abs{m_1 - m_2}
  \end{align*}
  It follows that
  \begin{align*}
    \gromov{e}{\alpha_1(m_1)}{\alpha_2(m_2)} 
      &\geq \frac{1}{2}(m_1 + m_2 - 2\delta - \abs{m_1 - m_2})\\
      &\geq \min\{m_1, m_2\} - \delta
  \end{align*} 
  and the result follows. Otherwise $p_1$ is on $[\alpha_1(n_1),
  \alpha_2(n_2)]$. Similarly either the claimed inequality holds, or there is a
  point $p_2$ on $[\alpha_1(n_1), \alpha_2(n_2)]$ within a distance $\delta$ of
  $\alpha_1(m_2)$.
  
  If $p_1$ lies between $\alpha_2(n_2)$ and $p_2$ on $[\alpha_1(n_1),
  \alpha_2(n_2)]$ then, by considering the triangle with vertices $p_2$,
  $\alpha_2(m_2)$ and $\alpha_2(n_2)$, we see that $\dist(p_1, \alpha_2) \leq
  2\delta$. Therefore $\dist(\alpha_1(m_1), \alpha_2) \leq 3\delta$, and so by
  a repetition of the argument used earlier,
  \begin{equation*}
    \gromov{e}{\alpha_1(m_1)}{\alpha_2(m_2)} \geq \min\{m_1, m_2\} - 3\delta
  \end{equation*}
  and the result follows.

  Otherwise,
  \begin{equation*}
    \dist(\alpha_1(n_1), \alpha_2(n_2)) = \dist(\alpha_1(n_1), p_1) +
        \dist(p_1, p_2) + \dist(p_2, \alpha_2(n_2)).
  \end{equation*}
  Therefore,
  \begin{align*}
    \mathrlap{\gromov{e}{\alpha_1(n_1)}{\alpha_n(n_2)} - \gromov{e}{\alpha_1(m_1)}{\alpha_2(m_2)}}
    \quad\quad\phantom{{}={}}&\\
           &\mathllap{{}={}}\left(\dist(\alpha_1(m_1), \alpha_1(n_1)) - \dist(\alpha_1(m_1), p_1)\right) \\
           &+ \left(\dist(\alpha_1(m_1), \alpha_2(m_2)) - \dist(p_1, p_2)\right) \\
           &+ \left(\dist(\alpha_2(m_2), \alpha_2(n_2)) - \dist(p_2, \alpha_2(m_2))\right) \\
           &\mathllap{{}\leq{}} \delta+2\delta+\delta=4\delta.
  \end{align*}
  It follows that
  \begin{equation*}
    \gromov{e}{\alpha_1(\infty)}{\alpha_2(\infty)} \leq \gromov{e}{\alpha_1(m_1)}{\alpha_2(m_2)} + 6\delta,
  \end{equation*}
  which completes the proof.
\end{proof}

\begin{proof}[Proof of Proposition~\ref{prop:projectstoasimplex}] 
  Let $\xi_1$ and $\xi_2$ be points in $U$, so $\rho(\xi_1, \xi_2)
  \leq \rho_0$.  For $i$ equal to $1$ or $2$ let $x_i$ be a vertex of the
  minimal simplex of $K_n$ containing $p^\infty_n(\xi_i)$. Let $\alpha_i$ be a
  geodesic ray with $\alpha(0) = e$, $\alpha_i(\infty) = \xi_i$ and
  $\dist(\alpha_i(n), x_i) \leq 2\delta+1$. 
  
  By definition of the visual metric, $\gromov{e}{\xi_1}{\xi_2} \geq
  \log_a(k_1/\rho_0)$. Therefore, by \cref{lem:boundary_gromov_product},
  \begin{equation*}
    \gromov{e}{\alpha_1(n)}{\alpha_2(n)} \geq \min\{n - 3\delta, \log_a(k_1/\rho_0) - 6\delta\}.
  \end{equation*}
  In particular,
  \begin{equation*}
    \dist(\alpha_1(n), \alpha_2(n)) \leq \max\{6\delta, 12\delta + 2(n - \log(k_1/\rho_0))\}
  \end{equation*}
  and the result follows since $D \geq 12\delta$.
\end{proof}

\section{Horoballs in the Rips complex}\label{sec:horoball}

In Section~\ref{sec:double_dagger} we will use a limiting argument, which will
relate the topology of the complement of a point in the boundary $\boundary G$
to a particular subcomplex of $K$, which we will think of as a horoball in $K$.
In this section we prove that this subcomplex possesses the properties that we
will apply later.

\subsection{Defining the horoball}

For each $n$, let $v_n$ be a vertex of $K_n$ and let $g_n$ be an element of $G$
taken so that $g_n\cdot v_n$ is a point $v \in K_n$ independent of $n$.  Since
$K$ is locally finite, we can pass to a subsequence so that $v_i \in
K_{n_i}$ satisfies the following conditions:
\begin{enumerate}
  \item
    The translates by $g_i$ of large neighbourhoods of $v_i$ in the sphere
    $S_{n_i} \subset G$ are equal: for any $i \geq j$,
    \begin{equation*}
      N_j(v) \intersection g_i(S_{n_i}) = N_j(v) \intersection g_j(S_{n_j})
    \end{equation*}
    (Here $N$ denotes a neighbourhood taken with respect to the word metric on
    $G$.)
  \item
    The sequence $(g_i)_{i=1}^\infty$ of group elements, considered also as a
    sequence of vertices of $K$, converges to a point $\xi \in \boundary G$ as
    $i\to\infty$.
\end{enumerate}

Let $H_i$ be the full subcomplex of $K$ with vertex set $N_i(v) \intersection
g_i(S_{n_i})$; notice that $H_i$ contains $H_j$ for $i \geq j$, and also that
$g_i$ maps the full subcomplex of $K_{n_i}$ with vertex set $N_i(v_i)
\intersection S_{n_i}$ isometrically onto $H_i$. 

Let $H$ be the union of the subcomplexes $H_i \subset K$. We view $H$ as a
horosphere, the topology of which approximates that of $\boundary G - \set\xi$.
The rest Section of~\ref{sec:horoball} will be spent justifying this point of view.

\subsection{Geodesics and the horosphere}

\begin{lemma}\label{lem:nonempty_intersection}
  Every bi-infinite geodesic with one end point equal to $\xi$ meets $H$.
  
  Furthermore, the intersection between the geodesic and $H$ is contained in a
  ball with centre $v$ and radius depending only on $\delta$ and the distance
  from the $v$ to the geodesic.
\end{lemma}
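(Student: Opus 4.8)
The plan is to regard $H$ as a horosphere based at $\xi$ and passing through $v$, and to show that a bi-infinite geodesic $\gamma$, oriented so that $\gamma(+\infty)=\xi$, crosses this horosphere exactly once up to bounded error, at a point close to $v$. The starting observation is that $\dist(v,g_i)=\dist(v_i,e)=n_i$ for every $i$, so $v$ lies on each sphere $g_i(S_{n_i})$; the first of the two conditions defining the subsequence then upgrades this as follows. If $w$ is a vertex of $H$, say $w\in N_j(v)\intersection g_j(S_{n_j})$, then $w\in N_j(v)\intersection g_i(S_{n_i})$ for all $i\ge j$, so $\dist(w,g_i)=n_i=\dist(v,g_i)$ for all large $i$. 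Letting $g_i\to\xi$, this says that the Busemann function $b_\xi$ based at $\xi$ satisfies $\abs{b_\xi(w)-b_\xi(v)}\le O(\delta)$: the vertices of $H$ lie coarsely on a single horosphere through $v$. Conversely, any vertex $u$ with $\dist(u,g_i)=n_i$ and $\dist(u,v)\le i$ is a vertex of $H_i\subseteq H$ by definition.

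For the furthermore I would first reduce to vertices. If $x\in\gamma\intersection H$ then the minimal simplex of $K$ containing $x$ is an edge (or vertex) of $\gamma$, and it is a face of a simplex of some $H_j$; hence both endpoints of that edge are vertices of $H$ lying on $\gamma$, and $x$ is within distance $1$ of such a vertex. It therefore suffices to bound $\dist(w,v)$ for a vertex $w$ of $H$ lying on $\gamma$. Writing $w=\gamma(t_w)$ and letting $p=\gamma(t_p)$ be a nearest point of $\gamma$ to $v$, with $\dist(v,p)=\Delta:=\dist(v,\gamma)$, I would use that $b_\xi$ decays at unit rate along $\gamma$, namely $b_\xi(\gamma(t))=b_\xi(\gamma(0))-t+O(\delta)$ for all $t$, since every forward subray of $\gamma$ limits to $\xi$. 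Combining $\abs{b_\xi(w)-b_\xi(v)}\le O(\delta)$ with the $1$-Lipschitz estimate $\abs{b_\xi(v)-b_\xi(p)}\le\Delta$ pins $\abs{t_w-t_p}\le\Delta+O(\delta)$, whence $\dist(w,v)\le\dist(w,p)+\Delta=\abs{t_w-t_p}+\Delta\le 2\Delta+O(\delta)=:R$. Thus $\gamma\intersection H$ lies in the ball of radius $R+1$ about $v$, a radius depending only on $\delta$ and $\dist(v,\gamma)$.

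For nonemptiness I would fix $i$ large and examine the integer-valued, $1$-Lipschitz function $m\mapsto\dist(\gamma(m),g_i)$ on the vertices $\gamma(m)$ of $\gamma$. Since $g_i\to\xi=\gamma(+\infty)$, the nearest-point projection $\gamma(s_i)$ of $g_i$ has $s_i\to+\infty$, and for $m\le s_i$ slimness gives $\dist(\gamma(m),g_i)=(s_i-m)+\dist(g_i,\gamma)+O(\delta)$, which decreases coarsely at unit rate in $m$. Comparing with $n_i=\dist(v,g_i)=\Delta+(s_i-t_p)+\dist(g_i,\gamma)+O(\delta)$ (the geodesic $[v,g_i]$ passing within $O(\delta)$ of $p$ for large $i$) shows that, at parameters below $s_i$, the function exceeds $n_i$ just below $t_p-\Delta$ and is smaller than $n_i$ just above it. As the function changes by at most $1$ between consecutive vertices and $n_i$ is an integer, it attains $n_i$ at some vertex $\gamma(m^\star)$ with $m^\star=t_p-\Delta+O(\delta)$, so $\dist(\gamma(m^\star),v)\le 2\Delta+O(\delta)\le i$ for $i$ large. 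Then $\gamma(m^\star)\in N_i(v)\intersection g_i(S_{n_i})$ is a vertex of $H_i\subseteq H$ lying on $\gamma$, so $\gamma\intersection H\ne\emptyset$.

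The subtlety, and the step I expect to be the main obstacle, is that for a fixed index $i$ the geodesic $\gamma$ meets the single sphere $g_i(S_{n_i})$ twice: once near $v$, as found above, and once far along $\gamma$ near $\xi$, beyond the projection of $g_i$. Only the near intersection lies in $H$. The far intersection sits on $g_i(S_{n_i})$ for that one index but has a Busemann value very different from $b_\xi(v)$, so it fails $\dist(\cdot,g_{i'})=n_{i'}$ for other indices and is excluded from $H$ precisely by the first stability condition. Getting this bookkeeping right — using that condition to promote membership of one sphere near $v$ to coarse membership of the horosphere, together with the exact unit-rate decay of $b_\xi$ along $\gamma$ that forbids a second crossing — is what makes both assertions work.
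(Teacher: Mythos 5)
Your proof is correct, and while it rests on the same underlying geometry as the paper's argument, it packages the boundedness half in a genuinely different and cleaner way. The paper works entirely by hand: it builds auxiliary points $y_i,z_i$ on $[x,g_i]$ and on the geodesic, shows $n_i-\dist(g_i,z_i)\to\infty$, extracts by an intermediate-value argument a point $p_i$ with $\dist(g_i,p_i)=n_i$ on the near side of the projection of $g_i$, bounds $\dist(v,p_i)\leq 2\dist(v,x)+2\delta$ by a two-case slim-triangle computation, and then gets boundedness of all of $H\intersection\alpha$ by noting that any $p\in H\intersection\alpha$ satisfies, once $i$ is large enough that $z_i$ has passed $p$, exactly the hypotheses placed on $p_i$. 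You instead isolate the real content of the stability condition up front: since $H_j\subseteq H_i$, every vertex $w$ of $H$ satisfies $\dist(w,g_i)=n_i=\dist(v,g_i)$ for \emph{all} large $i$, so $H$ lies on a single coarse horosphere through $v$; combined with the unit-rate decay and $1$-Lipschitz property of the Busemann function this yields the radius bound $2\dist(v,\gamma)+O(\delta)$ in one line, and it is precisely what excludes the far crossing of each individual sphere $g_i(S_{n_i})$ --- the bookkeeping the paper handles with the ``$z_i$ between $\xi$ and $p$'' device. Your existence argument (a discrete intermediate value theorem for the integer-valued, $1$-Lipschitz function $m\mapsto\dist(\gamma(m),g_i)$, located by comparing with $\dist(v,g_i)$ via the projections of $v$ and $g_i$ onto $\gamma$) is essentially the paper's, with the small added virtue of producing an actual vertex of $H_i$; the paper's $p_i$ is a priori only a point of the Cayley graph, though no gap arises there since a non-vertex point at integer distance $n_i$ from $g_i$ forces both endpoints of its edge to lie at distance exactly $n_i$ as well. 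The one step you should spell out when writing this up is the standard $O(\delta)$ comparison between the horofunction defined from the sequence $(g_i)$ and the Busemann function of a ray to $\xi$, since that is where all of your additive errors are hiding.
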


\begin{proof}
  Let $\alpha$ be a bi-infinite geodesic with $\xi$ as an end point.  Let the
  distance $\dist(v,\alpha)$ be realised by a point $x \in \alpha$.

  The points $g_i$ converge to $\xi$ as $i\to\infty$, so the distances
  $\dist(x, [\xi, g_i])$ tend to infinity as $i$ tends to infinity. The
  triangle with vertices $x$, $\xi$ and $g_i$ is $\delta$-slim, so for each $i$
  there exists a point $y_i$ on $[x, g_i]$ with $\dist(y_i, \alpha) \leq
  \delta$ and $y_i\to\xi$ as $i\to\infty$.  For each $i$ let $z_i$ be a point
  on $\alpha$ with $\dist(y_i, z_i) \leq \delta$.

  Since $\dist(g_i, v) = n_i$ for all $i$ and $\dist(x,y_i) \to
  \infty$ as $i\to\infty$, we have the following limit.
  \begin{align*}\label{eqn:zi_closer_to_gi_than_x}
    n_i - \dist(g_i, z_i) & \geq n_i - \dist(g_i, y_i) - \delta \\
                          &= n_i - \dist(g_i, x) + \dist(x, y_i) - \delta\\
                          &\geq n_i - \dist(g_i, v) - \dist(x, v) + \dist(x,y_i) - \delta\\
                          &= \dist(x,y_i) - \dist(x,v) - \delta\\
                          &\to \infty\text{ as $i \to \infty$}\tag{$\star$}
  \end{align*}
  In particular, $n_i > \dist(g_i, z_i)$ for $i$ sufficiently large, so for
  $i$ sufficiently large, there exists a point $p_i$ on $\alpha$
  such that $z_i$ lies between $\xi$ and $p_i$ on $\alpha$ and so that
  $\dist(g_i,p_i)=n_i$. We now show that $\dist(v, p_i)$ is uniformly bounded.

  The limit~\eqref{eqn:zi_closer_to_gi_than_x}, taken together with the fact
  that $\dist(g_i, p_i) = n_i$ and $\dist(g_i, x) \geq n_i - \dist(x, v)$,
  shows that the distances $\dist([z_i, g_i], x)$ and $\dist([z_i, g_i], p_i)$
  tend to infinity as $i\to\infty$. Let $i_0$ be so that both distances are
  greater than $\delta$ for $i\geq i_0$.

  We now assume $i \geq i_0$ and divide into cases according to which of $x$
  and $p_i$ is closer to $z_i$.
  \begin{enumerate}
    \item If $x$ is between $z_i$ and $p_i$ on $\alpha$ then by considering a
      geodesic triangle with vertices $g_i$, $z_i$ and $p_i$ we see that for
      $i$ sufficiently large, $x$ is within a distance $\delta$ of a point
      $q_i$ on the geodesic from $g_i$ to $p_i$.  Therefore,
      \begin{align*}
        \dist(g_i,x)&\leq \dist(g_i,q_i)+\delta\\
                        &\leq \dist(g_i,p_i)-\dist(p_i,q_i)+\delta\\
                        &\leq \dist(g_i,p_i)-\dist(x,p_i)+2\delta.
      \end{align*}
      It follows that $\dist(x,p_i) \leq \dist(g_i,p_i)-\dist(g_i,x)+2\delta
      \leq \dist(x, v) + 2\delta$.
    \item If $p_i$ is between $z_i$ and $x$ on $\alpha$ then by an identical 
      argument with the r\^oles of $x$ and $p_i$ reversed we can similarly 
      deduce that 
      \begin{equation*}
        \dist(x,p_i)\leq\dist(g_i,x) - \dist(g_i,p_i) + 2\delta \leq  \dist(x,v) + 2\delta.
      \end{equation*}
  \end{enumerate}

  Either way, $\dist(v, p_i) \leq 2\dist(x, v) + 2\delta$ for $i \geq i_0$. In
  summary, we have shown that for $i$ sufficiently large there is a point
  $p_i\in\alpha$ with $\dist(g_i,p_i)=n_i$ and $\dist(v, p_i)$ bounded by
  $2\dist(x,w) + 2\delta$. Then $p_i \in H_i \intersection \alpha$ whenever $i$
  is at least $2\dist(x,w) + 2\delta$. Therefore the intersection is non-empty. 

  To see that it is bounded, let $p \in H \intersection \alpha$. Since $z_i \to
  \xi$ as $i\to \infty$, $z_i$ is between $\xi$ and $p$ on $\alpha$ for $i$
  sufficiently large. Fix $i \geq i_0$ so that this is true and also so that $p
  \in H_i$, so $\dist(g_i, p) = n_i$. Then the point $p$ has the properties
  assumed of $p_i$ above, so the argument above shows that $\dist(p, v) \leq
  2\dist(x,v) + 2\delta$. Therefore the intersection $H \intersection \alpha$
  is bounded.
\end{proof}

\begin{lemma}\label{lem:bounded_intersection}
  Let $C$ be a compact subset of $\boundary G - \set\xi$. Then the set of points 
  of intersection between $H$ and bi-infinite geodesics with one end point equal 
  to $\xi$ and the other in $C$ is bounded.
\end{lemma}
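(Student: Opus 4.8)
The plan is to reduce everything to \cref{lem:nonempty_intersection}. That lemma already tells us that for a single bi-infinite geodesic $\alpha$ with one endpoint $\xi$, the intersection $H \intersection \alpha$ lies in a ball about $v$ whose radius depends only on $\delta$ and on $\dist(v, \alpha)$. So to obtain the uniform boundedness claimed here, it suffices to produce a single bound on $\dist(v, \alpha)$ that holds for every geodesic $\alpha$ running from $\xi$ to a point of $C$. Feeding such a bound into \cref{lem:nonempty_intersection} then puts every intersection point into one fixed ball about $v$, which is exactly the assertion.

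The first step is to exploit compactness. Since $\xi \notin C$ and $C$ is compact, the continuous function $\eta \mapsto \rho(\xi, \eta)$ is strictly positive on $C$ and hence attains a positive minimum; call it $\epsilon > 0$, so that $\rho(\xi, \eta) \geq \epsilon$ for all $\eta \in C$. The second step translates this into a bound on the Gromov product by using the defining inequality of the visual metric: from $\rho(\xi, \eta) \leq k_2 a^{-\gromov{e}{\xi}{\eta}}$ we obtain $\gromov{e}{\xi}{\eta} \leq \log_a(k_2/\epsilon)$, uniformly over all $\eta \in C$.

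The third and technically most delicate step is to convert this upper bound on the Gromov product into an upper bound on $\dist(v, \alpha)$ for any geodesic $\alpha$ from $\xi$ to $\eta$. The relevant fact is that in a $\delta$-hyperbolic space the distance from the basepoint $e$ to the bi-infinite geodesic $[\xi, \eta]$ differs from $\gromov{e}{\xi}{\eta}$ by at most an additive error controlled by $\delta$; this is precisely the point at which the standing assumption that all \emph{ideal} triangles are $\delta$-slim is needed, since $\alpha$ has ideal endpoints. This gives $\dist(e, \alpha) \leq \log_a(k_2/\epsilon) + c\delta$ for an absolute constant $c$, and then the triangle inequality $\dist(v, \alpha) \leq \dist(v, e) + \dist(e, \alpha)$, together with the fact that $v$ is a fixed vertex so that $\dist(v, e)$ is a fixed constant, yields a bound on $\dist(v, \alpha)$ depending only on $\delta$, on $\dist(e, v)$, and on $\epsilon$. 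Crucially this bound is independent of the choice of $\eta \in C$ and of the (non-unique) choice of geodesic $\alpha$.

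I expect the main obstacle to be bookkeeping in this third step rather than any conceptual difficulty: one must be careful that the distance-to-geodesic estimate is applied to a genuinely \emph{ideal} geodesic and that the additive constant is tracked honestly through the slimness of ideal triangles. Once the uniform bound $R$ on $\dist(v, \alpha)$ is in hand, \cref{lem:nonempty_intersection} provides a radius $R'$, depending only on $\delta$ and $R$, such that $H \intersection \alpha$ is contained in the ball of radius $R'$ about $v$ for every admissible $\alpha$. Taking the union over all $\eta \in C$ and all choices of geodesic therefore remains inside this single ball, so the set of intersection points is bounded, completing the proof.
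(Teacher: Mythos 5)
Your proposal is correct and follows essentially the same route as the paper: both reduce, via \cref{lem:nonempty_intersection}, to bounding $\dist(v,\alpha)$ uniformly over the admissible geodesics $\alpha$, and both obtain that bound from the comparability of the distance from the basepoint to $[\xi,\eta]$ with the Gromov product of $\xi$ and $\eta$, combined with the fact that $C$ is a compact subset of $\boundary G - \set\xi$. The only difference is presentational: the paper argues by contradiction with a sequence $\zeta_i$ and proves the needed half of the Gromov-product inequality inline by a slim-triangle argument, whereas you give the direct quantitative bound through the visual metric and cite the standard ideal-triangle estimate.
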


\begin{proof}
  By Lemma~\ref{lem:nonempty_intersection} it is sufficient to show that
  $\dist(v, \alpha)$ is bounded as $\alpha$ ranges over all bi-infinite
  geodesics with one end point equal to $\xi$ and the other in $C$.

  Let $\beta$ be a geodesic ray from $v$ to $\xi$. Suppose that there is a
  sequence $\zeta_i$ of points in $C$ with $\dist(v, [\xi, \zeta_i]) = m_i$
  where $m_i \to \infty$ as $i\to \infty$. Let $\beta_i$ be a geodesic ray from
  $v$ to $\zeta_i$. Let $p_i$ be a point on $\beta$ with $\dist(p_i, v) = m_i -
  \delta - 1$, so $\dist(p_i, [\xi, \zeta_i]) > \delta$. Then by hyperbolicity
  of the triangle with edges $\beta$, $\beta_i$ and $[\xi, \zeta_i]$ there is a
  point $q_i \in \beta_i$ with $\dist(p_i, q_i) = \delta$. Then $\dist(v, q_i)
  \geq \dist(v, p_i) - \delta$. Therefore,
  \begin{align*}
    \gromov{v}{\xi}{\zeta_i} &\geq \gromov{v}{p_i}{q_i} \\
                             &\geq \dist(v, p_i) - \delta\\
                             &\to \infty \text{ as $i\to\infty$},
  \end{align*}
  so $\zeta_i\to\xi$ as $i\to\infty$. But $C$ is a compact subset of $\boundary
  G - \set\xi$, so we reach a contradiction.
\end{proof}

\subsection{Projections from the boundary to the horosphere}

\begin{definition}
  For each $i$, let $U_i \subset \boundary G$ be the set $\left(g_i \composed
  p^\infty_{n_i} \composed g_i^{-1}\right)^{-1} H_i$ and let $q_i \from U_i \to
  H_i$ be the restriction of $g_i \composed p^\infty_{n_i}\composed g_i^{-1}$
  to $U_i$.
\end{definition}

The maps $q_i$ are described by the following lemma.

\begin{lemma}\label{lem:describing_q_i}
  Let $\zeta \in \boundary G$ be a point such that
  \begin{equation*}
    N_{2\delta+1}^{g_i\cdot S_{n_i}} \set{\eta(n_i)\suchthat\text{$\eta$ a geodesic ray with
        $\eta(0) = g_i$ and $\eta(\infty) = \zeta$}}
  \end{equation*}
  is contained in $H_i$, where $N_{2\delta+1}^{g_i\cdot S_{n_i}}$ denotes a
  $2\delta+1$-neighbourhood in $g_i\cdot S_{n_i}$ taken with respect to the
  word metric $\dist$.  Then $\zeta \in U_i$ and $q_i(\zeta)$ is contained in
  the simplex of $H_i$ spanned by this set of vertices.
\end{lemma}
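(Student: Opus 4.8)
The plan is to unwind the definitions of $q_i$ and of the translated projection $g_i \composed p^\infty_{n_i} \composed g_i^{-1}$, and to exploit the equivariance of the cover $\set{U_{n_i}(x)}_{x \in S_{n_i}}$ under the isometric action of $g_i$. Write $\zeta' = g_i^{-1}\cdot\zeta$. Since $p^\infty_{n_i}$ is defined by a partition of unity subordinate to $\set{U_{n_i}(x)}_{x \in S_{n_i}}$, the point $p^\infty_{n_i}(\zeta')$ lies in the simplex of $K_{n_i}$ spanned by those $x \in S_{n_i}$ whose partition-of-unity function does not vanish at $\zeta'$; each such $x$ satisfies $\zeta' \in U_{n_i}(x)$, so admits a geodesic ray $\gamma$ with $\gamma(0) = e$, $\gamma(\infty) = \zeta'$ and $\dist(\gamma(n_i), x) \leq 2\delta+1$. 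I would then apply $g_i$: as $g_i$ acts by isometries and $\eta = g_i \composed \gamma$ is then a geodesic ray with $\eta(0) = g_i$ and $\eta(\infty) = \zeta$, the translate $g_i\cdot x$ is exactly a vertex of $g_i\cdot S_{n_i}$ lying within $2\delta+1$ of $\eta(n_i)$ for some such $\eta$. Consequently $g_i \composed p^\infty_{n_i} \composed g_i^{-1}(\zeta)$ lies in the simplex of $K$ spanned by a subset of the vertex set $V$ appearing in the statement.

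Next I would verify that $V$ spans a simplex of $K$ at all. The set $\set{\eta(n_i) \suchthat \eta(0) = g_i,\ \eta(\infty) = \zeta}$ has diameter at most $2\delta$: any two rays from $g_i$ to $\zeta$ form a degenerate ideal triangle, so $\eta_1(n_i)$ lies within $\delta$ of $\eta_2$, and comparing distances to $g_i$ forces $\dist(\eta_1(n_i), \eta_2(n_i)) \leq 2\delta$. Taking $2\delta+1$-neighbourhoods then yields $\Diam V \leq 6\delta + 2 \leq D$. This is precisely the diameter estimate already carried out in the proof of Lemma~\ref{lem:L_Ksimplicial}, so $V$ spans a simplex of $K$.

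Finally, the hypothesis asserts that $V$ is contained in the vertex set of $H_i$. Since $H_i$ is by definition a full subcomplex of $K$ and $V$ spans a simplex of $K$, the set $V$ spans a simplex of $H_i$. The point $g_i \composed p^\infty_{n_i} \composed g_i^{-1}(\zeta)$ lies in a face of this simplex, hence in $H_i$, so by the definition of $U_i$ we get $\zeta \in U_i$. As $q_i$ is the restriction of $g_i \composed p^\infty_{n_i} \composed g_i^{-1}$ to $U_i$, it follows that $q_i(\zeta)$ lies in the simplex of $H_i$ spanned by $V$, which is the desired conclusion.

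The routine ingredients are the diameter bound, which is identical to the one in Lemma~\ref{lem:L_Ksimplicial}, and the fullness argument for $H_i$. The only point requiring genuine care is the equivariance bookkeeping in the first step: one must keep straight that the support of the partition of unity at $\zeta'$ is contained in $\set{x \suchthat \zeta' \in U_{n_i}(x)}$, and that applying $g_i$ carries this set precisely onto (a subset of) $V$, so that the translated projection lands \emph{in} the simplex spanned by $V$ rather than merely near it.
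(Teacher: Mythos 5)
Your proposal is correct and is essentially the paper's argument: the paper's own proof simply declares the statement ``immediate from the definition of the maps $p^\infty_{n_i}$'' together with the diameter bound (the paper states $6\delta+1$, your $6\delta+2$ is the more careful count, and either is $\leq D$), and your write-up just fills in the equivariance bookkeeping and the fullness of $H_i$ that the paper leaves implicit.
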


\begin{proof}
  This is immediate from the definition of the maps $p^\infty_{n_i}$. Note that
  the set has diameter at most $6\delta+1$, so does indeed span a simplex.
\end{proof}

\begin{lemma}
  Every compact subset of $\boundary G - \set\xi$ is contained in $U_i$ for 
  $i$ sufficiently large.
\end{lemma}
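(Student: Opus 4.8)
The plan is to reduce the statement to a single uniform bound on word-distances and then extract that bound from the slim-triangle estimates already in hand. Fix a compact set $C \subseteq \boundary G - \set\xi$. By \cref{lem:describing_q_i}, in order to conclude $\zeta \in U_i$ it suffices to check that the $2\delta+1$-neighbourhood in $g_i\cdot S_{n_i}$ of the set $\set{\eta(n_i)}$, taken over all geodesic rays $\eta$ with $\eta(0)=g_i$ and $\eta(\infty)=\zeta$, is contained in $H_i$. Any two such rays are uniformly close, so this set has diameter at most $2\delta$; hence it is enough to produce a constant $B$, depending only on $C$ and $\delta$, with $\dist(v,\eta(n_i)) \leq B$ for every $\zeta \in C$ and every such $\eta$. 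Indeed, once this holds, every vertex of $g_i\cdot S_{n_i}$ within $2\delta+1$ of $\set{\eta(n_i)}$ lies within $B+4\delta+1$ of $v$, and therefore in $H_i = N_i(v)\intersection g_i(S_{n_i})$ as soon as $i \geq B+4\delta+1$.

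To get a point to compare $\eta(n_i)$ against, I would invoke the work behind the previous two lemmas. By the proof of \cref{lem:bounded_intersection} the distance $\dist(v,[\xi,\zeta])$ is bounded by some $R=R(C)$ uniformly over $\zeta\in C$, and by the proof of \cref{lem:nonempty_intersection} there is, for all large $i$, a point $p_i$ on $[\xi,\zeta]$ with $\dist(g_i,p_i)=n_i$, lying on the $\zeta$-side of a point $z_i$ with $z_i\to\xi$ and $\dist(g_i,z_i)<n_i$, and satisfying $\dist(v,p_i)\leq 2R+2\delta$. The thresholds in those arguments are governed by $\gromov{v}{\xi}{g_i}\to\infty$, which is independent of $\zeta$, together with the uniformly bounded quantity $R$, so the phrase ``for all large $i$'' can be taken uniform over $C$. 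It then remains to bound $\dist(\eta(n_i),p_i)$.

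For this comparison I would apply $\delta$-slimness to the ideal triangle with vertices $g_i$, $p_i$ and $\zeta$, whose sides are $[g_i,p_i]$ (of length $n_i$), the subray $[p_i,\zeta]\subseteq[\xi,\zeta]$, and $\eta=[g_i,\zeta]$. The point $\eta(n_i)$ lies within $\delta$ of one of the first two sides. If it is close to $[g_i,p_i]$, then matching the parameter $n_i$ with the length of $[g_i,p_i]$ forces it within $2\delta$ of the endpoint $p_i$. If instead it is within $\delta$ of a point $[p_i,\zeta]$ at arc-length $u$ from $p_i$, I would bound $u$ using that $p_i$ is coarsely the nearest point of $[\xi,\zeta]$ to $g_i$: if the nearest-point projection $x_0$ of $g_i$ onto $[\xi,\zeta]$ lay strictly on the $\zeta$-side of $p_i$, then $p_i$ would sit on the subsegment $[z_i,x_0]$, along which distance to $g_i$ is coarsely decreasing, giving $\dist(g_i,p_i)\leq\dist(g_i,z_i)+O(\delta)<n_i$, contradicting $\dist(g_i,p_i)=n_i$. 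With $x_0$ on the $\xi$-side of $p_i$, the distance $\dist(g_i,\cdot)$ along $[p_i,\zeta]$ grows like $n_i+u-O(\delta)$, while it is at most $n_i+\delta$; hence $u=O(\delta)$. Either way $\dist(\eta(n_i),p_i)=O(\delta)$, so $\dist(v,\eta(n_i))\leq 2R+O(\delta)=:B$, completing the reduction.

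I expect the main obstacle to be the two points that require care rather than calculation: first, checking that the ``for all large $i$'' supplied by \cref{lem:nonempty_intersection} is genuinely uniform over the compact set $C$, which I would trace back to the fact that $\gromov{v}{\xi}{g_i}\to\infty$ does not depend on $\zeta$; and second, the borderline subcase in which $\eta(n_i)$ projects onto the $[p_i,\zeta]$ side of the triangle, where the nearest-point-projection argument identifying $p_i$ with the closest point of $[\xi,\zeta]$ to $g_i$ is what prevents $\eta(n_i)$ from drifting away from $v$.
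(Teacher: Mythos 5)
Your proof is correct, but it takes a genuinely different route from the paper's. The paper argues by contradiction and compactness: if $\zeta_i \in C\setminus U_i$ for infinitely many $i$, then \cref{lem:describing_q_i} yields geodesic rays $\alpha_i$ from $g_i$ to $\zeta_i$ that miss larger and larger balls of $H$ about $v$; after passing to a subsequence these converge uniformly on compacta to a bi-infinite geodesic from $\xi$ to a point of $C$ missing $H$, contradicting \cref{lem:nonempty_intersection}. You instead give a direct, quantitative argument: you re-run the proofs of \cref{lem:nonempty_intersection,lem:bounded_intersection} to obtain the bound $\dist(v,p_i)\leq 2R+2\delta$ uniformly over $\zeta\in C$ --- correctly tracing the ``for all large $i$'' thresholds back to $R=\sup_{\zeta\in C}\dist(v,[\xi,\zeta])$ and to $\gromov{v}{\xi}{g_i}\to\infty$, which is independent of $\zeta$ --- and then add a slim-triangle and nearest-point-projection comparison, not present in the paper, to transfer the bound from $p_i\in[\xi,\zeta]$ to the points $\eta(n_i)$ on rays emanating from $g_i$, so that \cref{lem:describing_q_i} applies once $i$ exceeds an explicit $B$. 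The trade-off: the paper's argument is shorter and only uses the \emph{statements} of the earlier lemmas, at the price of a soft limiting step (one must still check that the limit geodesic genuinely misses $H$); yours is longer and leans on the \emph{proofs} of those lemmas rather than their statements, but it is fully effective, producing an $i_0$ depending only on $R(C)$, $\delta$ and the rate of divergence of $\gromov{v}{\xi}{g_i}$, and it avoids Arzel\`a--Ascoli altogether. The two points you flag as delicate --- uniformity over $C$ of the thresholds, and the borderline case in which $\eta(n_i)$ is $\delta$-close to the $[p_i,\zeta]$ side of the triangle --- both check out as you describe, the latter via the standard coarse monotonicity of $\dist(g_i,\cdot)$ along $[\xi,\zeta]$ away from the projection of $g_i$.
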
 

\begin{proof}
  Suppose that a compact subset $C$ of $\boundary G - \set\xi$ is not contained
  in $U_i$ for infinitely many $i$. Then after passing to a subsequence there
  exists for each $i$ a point $\zeta_i \in C\setminus U_i$, so by
  Lemma~\ref{lem:describing_q_i} for each $i$ there is a geodesic ray
  $\alpha_i$ from $g_i$ to $\zeta_i$ that meets $N_{2\delta+1}^{g_i\cdot
  S_{n_i}}(g_i\cdot S_{n_i} - H_i)$, and so does not meet $H_i\intersection
  N_{i-4\delta-1}(v)$, which is equal to $H \intersection N_{i-4\delta-1}(v)$.
  After passing to a subsequence and reparametrising, the rays $\alpha_i$
  converge uniformly on compact subsets to a bi-infinite geodesic from $\xi$ to
  some point $\zeta \in C$ that does not meet $H$, contradicting
  Lemma~\ref{lem:nonempty_intersection}.
\end{proof}

\begin{lemma}\label{lem:affinehomotopic}
  For any compact subset $C\subset\boundary G - \set\xi$, there exists $i_0$ 
  such that for all $i \geq i_0$ and any $\zeta \in C$, $\newdist(q_i(\zeta),
  q_{i_0}(\zeta)) \leq 10\delta + 2$, and, in particular, $q_i\restricted{C}$
  and $q_{i_0}\restricted{C}$ are homotopic.
\end{lemma}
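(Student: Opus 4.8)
The plan is to reduce the statement to an estimate on the vertices $\eta_j(n_j)$ singled out by \cref{lem:describing_q_i}, where $\eta_j$ denotes an arbitrary geodesic ray from $g_j$ to $\zeta$. Fix a bi-infinite geodesic $\alpha$ from $\xi$ to $\zeta$; the proof of \cref{lem:bounded_intersection} yields a constant $M = M(C)$ with $\dist(v,\alpha) \leq M$ for every $\zeta \in C$, and by the preceding lemma I may also assume $C \subset U_i$ for all $i \geq i_0$. By \cref{lem:describing_q_i}, every vertex of the minimal simplex of $H_i$ containing $q_i(\zeta)$ lies within $2\delta+1$ of $\eta_i(n_i)$ for some such ray $\eta_i$, and likewise for $q_{i_0}(\zeta)$; since $\dist(g_j, v) = \dist(e, v_j) = n_j$, the point $\eta_j(n_j)$ is the point of $\eta_j$ at the same distance from $g_j$ as $v$. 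It therefore suffices to prove, uniformly in $\zeta \in C$ and independently of the chosen rays, that $\dist(\eta_i(n_i), \eta_{i_0}(n_{i_0})) \leq 6\delta$ for $i \geq i_0$: every pair of vertices of the two minimal simplices then lies within $(2\delta+1) + 6\delta + (2\delta+1) = 10\delta+2 \leq D$, which both gives the asserted $\newdist$ bound and forces the two minimal simplices to span a common simplex of $H$.

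First I would locate any $\eta_j(n_j)$ for $j \geq i_0$ relative to $\alpha$. The argument of \cref{lem:nonempty_intersection}, applied to the point of $\eta_j$ within $\delta$ of the nearest-point projection of $v$ to $\alpha$, shows $\dist(\eta_j(n_j), v) \leq 2\dist(v,\alpha) + 2\delta \leq 2M + 2\delta$. Since $\dist(v, [g_j, \xi]) \to \infty$ as $j \to \infty$, I can enlarge $i_0$ so that $\dist(v, [g_j, \xi]) > 2M + 3\delta$ for all $j \geq i_0$; then $\dist(\eta_j(n_j), [g_j, \xi]) > \delta$, and $\delta$-slimness of the ideal triangle with vertices $g_j$, $\xi$ and $\zeta$ forces $\eta_j(n_j)$ to lie within $\delta$ of $\alpha$, for every choice of ray.

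It remains to show that $\eta_i(n_i)$ and $\eta_{i_0}(n_{i_0})$ sit at almost the same place along $\alpha$. Let $b_\zeta$ be a Busemann function centred at $\zeta$ with $b_\zeta(v) = 0$; it is $1$-Lipschitz, decreases at unit speed along any ray to $\zeta$, and satisfies $b_\zeta(g_j) = \dist(v, g_j) - 2\gromov{v}{g_j}{\zeta}$ up to an additive $O(\delta)$. As $\eta_j$ is a ray to $\zeta$ from $g_j$ with $\dist(g_j, v) = n_j$, this gives $b_\zeta(\eta_j(n_j)) = b_\zeta(g_j) - n_j = -2\gromov{v}{g_j}{\zeta}$, a value independent of the chosen ray. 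The crucial cancellation is that the $\dist(v,\alpha)$-dependent position of each point along $\alpha$ is governed solely by $\gromov{v}{g_j}{\zeta}$, which stabilises: compactness of $C$ together with $\xi \notin C$ bounds $\gromov{v}{\xi}{\zeta}$ above by a constant $B = B(C)$ through the visual metric, while $g_j \to \xi$ gives $\gromov{v}{g_j}{\xi} \to \infty$, so after enlarging $i_0$ we have $\gromov{v}{g_i}{g_{i_0}} \geq B + \delta$ for all $i \geq i_0$; a four-point inequality then gives $\abs{\gromov{v}{g_i}{\zeta} - \gromov{v}{g_{i_0}}{\zeta}} \leq \delta$, whence $\abs{b_\zeta(\eta_i(n_i)) - b_\zeta(\eta_{i_0}(n_{i_0}))} \leq 2\delta$. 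Since both points lie within $\delta$ of $\alpha$, on which $b_\zeta$ agrees with the negative of arclength, their nearest-point projections to $\alpha$ differ by at most $4\delta$ and the points themselves by $6\delta$.

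The distance bound $\newdist(q_i(\zeta), q_{i_0}(\zeta)) \leq 10\delta + 2$ then follows as in the first paragraph, and because the two minimal simplices span a common simplex $\sigma(\zeta)$ of $H$, the straight-line homotopy $(\zeta, t) \mapsto (1-t)\,q_{i_0}(\zeta) + t\,q_i(\zeta)$ takes values in $\sigma(\zeta) \subset H$ and is continuous, exhibiting $q_i\restricted{C} \simeq q_{i_0}\restricted{C}$. I expect the main difficulty to lie in the third paragraph: making the convergence $\gromov{v}{g_j}{\zeta} \to \gromov{v}{\xi}{\zeta}$ uniform in $\zeta \in C$ while tracking the coarse $O(\delta)$ ambiguities inherent in the Busemann function, so that the accumulated constants really close up to the stated $10\delta + 2$; once $M$ and $B$ are fixed, the earlier steps are uniform.
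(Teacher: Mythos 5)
Your proof is essentially correct but takes a genuinely different route from the paper's. Both arguments reduce, via \cref{lem:describing_q_i}, to showing that the points $\eta_j(n_j)$ are uniformly close for all rays $\eta_j$ from $g_j$ to $\zeta$ and all $j\geq i_0$; but where you compare $\eta_i(n_i)$ with $\eta_{i_0}(n_{i_0})$ directly using a Busemann function centred at $\zeta$ and the stabilisation of $\gromov{v}{g_j}{\zeta}$ as $g_j\to\xi$, the paper anchors everything to a single reference point $x'$ chosen where the bi-infinite geodesic $[\xi,\zeta]$ crosses $H$. The payoff of that choice is that $\dist(g_j,x')=n_j$ holds \emph{exactly} (by the coherence condition built into the definition of $H$), so a chain of slim-triangle arguments showing that every ray from $g_j$ to $\zeta$ passes within $\delta$ of $x'$ immediately yields $\dist(\eta_j(n_j),x')\leq 2\delta$ and the stated constant with no further bookkeeping. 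Your route is sound --- the reduction in your first paragraph, the placement of $\eta_j(n_j)$ within $\delta$ of $\alpha$ once $\dist(v,[g_j,\xi])$ is large, the use of compactness of $C$ in $\boundary G - \set\xi$ to pin down $\gromov{v}{g_j}{\zeta}$, and the observation that all the relevant vertices lie in the full subcomplex $H_i$ so the affine homotopy stays in $H$, are all correct --- but, as you anticipate, the identities $b_\zeta(x)=\dist(v,x)-2\gromov{v}{x}{\zeta}$ and ``unit-speed decrease along an arbitrary ray to $\zeta$'' each carry additive errors of a few $\delta$, so you obtain $6\delta$ plus untracked $O(\delta)$ terms rather than $6\delta$ on the nose, and hence a final constant somewhat worse than $10\delta+2$. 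This costs nothing in the paper, which only ever needs the bound to be at most $D$ so that the simplices cohere and the homotopy exists; but to recover the stated constant you would want the paper's exact anchor $x'\in[\xi,\zeta]\intersection H$ in place of the coarse Busemann comparison.
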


\begin{proof}
  Let $B\subset H$ be the set of points of intersection between $H$ and
  geodesics with one end point equal to $\xi$ and the other in $C$; this set is
  bounded by Lemma~\ref{lem:bounded_intersection}.

  Fix $\alpha$ to be some geodesic with $\alpha(-\infty)=\xi$ and
  $\alpha(\infty)\in C$ and let $x$ be a point of intersection between $\alpha$
  and $H$, so $x \in B$.  Since the triangle with vertices $x$, $g_i$ and $\xi$
  is $\delta$-slim, and $\dist(x, [\xi, g_i]) \to \infty$ as $i\to\infty$, for
  each $i$ there exists a point $y_i$ on $[g_i, x]$ with $\dist(y_i, \alpha)
  \leq \delta$ and so that $y_i \to \xi$ as $i\to\infty$. Let $z_i \in \alpha$
  be such that $\dist(y_i, z_i)\leq\delta$.
  
  Now fix $i_0$ such that the following conditions hold for all $i\geq 
  i_0$.
  \begin{enumerate}
    \item The distance $\dist(y_i,x)$ is greater than $7\delta + \Diam B$.
    \item The set $C$ is contained in $U_i$.
  \end{enumerate}

  Let $\zeta \in C$ and let $\alpha'$ be a geodesic from $\xi$ to $\zeta$. Let 
  $x'$ be a point in the intersection of $\alpha'$ and $B$. 

  Now let $i\geq i_0$. Then by considering the ideal triangle with vertices
  $g_i$, $x$ and $x'$, and using the fact that $\dist(y_i, x) > \delta + \Diam
  B$, we deduce that there exists a point $y_i'$ on $[x', g_i]$ with
  $\dist(y_i, y_i') \leq \delta$.

  Similarly, since $\dist(z_i, x) > \delta + \Diam B$, consideration of the
  triangle with vertices $\xi$, $x$ and $x'$ reveals that there exists a point
  $z_i'$ on $\alpha'$ between $\xi$ and $x'$ with $\dist(z_i, z_i') \leq
  \delta$. Then $\dist(y_i', z_i') \leq 3\delta$.

  Since $\dist(z_i', x') > 2\delta$, the distance from $y_i'$ to the sub-ray of
  $\alpha'$ from $x'$ to $\zeta$ is greater than $\delta$. Therefore by
  considering a triangle with vertices $g_i$, $x'$ and $\zeta$ we see that
  there exists a point $w_i$ on a geodesic ray $\eta_i$ from $g_i$ to $\zeta$
  with $\dist(w_i, y_i') \leq \delta$. It follows that $\dist(w_i, z_i') \leq
  4\delta$, and we know that $\dist(x', z_i') > 5\delta$, so $\dist(x', [z_i',
  w_i]) > \delta$ and therefore $\dist(x', \eta_i) \leq \delta$.

  In summary, any geodesic ray from $g_i$ to $\zeta$ passes within a distance 
  $\delta$ of $x'$, for $i\geq i_0$. Furthermore, $\dist(g_i,x') = n_i$, so 
  $\dist(\eta_i(n_i), x')\leq 2\delta$. It follows from the description of
  $q_i$ in \cref{lem:describing_q_i} that $q_i(\zeta)$ is contained in a
  simplex with all vertices within a distance $4\delta+1$ of $x'$. The set of
  all such vertices spans a simplex, so, in particular, $q_i\restricted{C}$ and
  $q_{i_0}\restricted{C}$ are homotopic for $i \geq i_0$. 
\end{proof}

\begin{lemma}\label{lem:propermaps} 
  For any compact subset $C \subset H$ there exists $i_0$ such that
  \begin{equation*}
    \bigunion_{i\geq i_0} q_i^{-1} C
  \end{equation*}
  is a relatively compact subset of $\boundary G - \set\xi$.
\end{lemma}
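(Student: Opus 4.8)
\subsection*{Proof proposal}

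The plan is to argue by contradiction, using the compactness of $\boundary G$ to reduce the statement to controlling geodesics and then invoking that the endpoints of the relevant geodesics all accumulate on $\xi$. First I would record the reduction. Since $\boundary G$ is a compact metric space, a subset $A \subseteq \boundary G - \set\xi$ is relatively compact in $\boundary G - \set\xi$ if and only if $\xi$ does not lie in the closure of $A$ taken in $\boundary G$; so it suffices to produce $i_0$ together with a neighbourhood of $\xi$ disjoint from $\bigunion_{i \geq i_0} q_i^{-1} C$ (this simultaneously rules out $\xi$ being a point of, or a limit point of, the union). Since $C$ is a compact subset of $H \subseteq K$ it is bounded, so I would fix $R$ with $\newdist(y, v) \leq R$ for all $y \in C$.

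The crux is the following geometric translation of membership in $q_i^{-1} C$: if $\zeta \in q_i^{-1} C$, then there is a geodesic ray $\eta$ from $g_i$ to $\zeta$ with $\dist(\eta(n_i), v) \leq R' := R + D + 2\delta + 1$, and hence some geodesic with $\dist(v, [g_i, \zeta]) \leq R'$. To prove this I would unwind the definition of $p^\infty_{n_i}$: the point $q_i(\zeta) = g_i \cdot p^\infty_{n_i}(g_i^{-1}\zeta)$ lies in a simplex at least one of whose vertices has the form $g_i x$ with $g_i^{-1}\zeta \in U_{n_i}(x)$, so, translating by $g_i$ exactly as in \cref{lem:describing_q_i}, there is a geodesic ray $\eta$ from $g_i$ to $\zeta$ with $\dist(\eta(n_i), g_i x) \leq 2\delta + 1$. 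Since $g_i x$ and a vertex of the minimal simplex containing $q_i(\zeta)$ both lie in a single simplex of $K$, they are within $D$ of one another, and that vertex is within $R$ of $v$ because $q_i(\zeta) \in C$; combining these gives $\dist(g_i x, v) \leq R + D$ and therefore $\dist(\eta(n_i), v) \leq R'$.

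To close the argument I would suppose, for contradiction, that no $i_0$ works. Taking the neighbourhood basis $\set{\rho(\cdot, \xi) < 1/m}$ of $\xi$ and $i_0 = m$, I can then extract points $\zeta_m \to \xi$ and indices $i_m \to \infty$ with $\zeta_m \in q_{i_m}^{-1} C$. By construction of the horoball $g_{i_m} \to \xi$, and $\zeta_m \to \xi$, so by the standard inequality for Gromov products (and the bounded base-point change from $e$ to $v$) $\gromov{v}{g_{i_m}}{\zeta_m} \to \infty$ as $m \to \infty$. But for any geodesic the distance from $v$ to it is bounded below by the corresponding Gromov product (up to an error bounded in terms of $\delta$ when an endpoint is ideal), so $\dist(v, [g_{i_m}, \zeta_m]) \to \infty$, contradicting the uniform bound $R'$ from the previous paragraph.

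I expect the middle step to be the main obstacle: correctly converting the combinatorial condition $q_i(\zeta) \in C$ into the geometric statement that a geodesic from $g_i$ to $\zeta$ passes within bounded distance of $v$. The compactness reduction and the Gromov-product estimate are routine, and the convergence $g_i \to \xi$ is already built into the construction of $H$.
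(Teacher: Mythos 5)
Your proof is correct and follows essentially the same route as the paper's: both argue by contradiction, extract a sequence $\zeta_i \to \xi$ with $q_i(\zeta_i) \in C$, and unwind the definition of $q_i$ to produce geodesics from $g_i$ to $\zeta_i$ passing within a uniformly bounded distance of $C$. The only cosmetic difference is in the closing step --- the paper passes to a limiting bi-infinite geodesic with both ideal endpoints equal to $\xi$, whereas you derive the same contradiction directly from a Gromov-product estimate --- and your unwinding of the condition $q_i(\zeta) \in C$ is in fact more explicit than the paper's.
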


\begin{proof}
  If not then there exists a sequence $\zeta_i$ of points with $\zeta_i \in U_i$ 
  for all $i$ such that $q_i(\zeta_i) \in C$ for all $i$ and $\zeta_i \to \xi$ 
  as $i \to \infty$.

  Then for each $i$ a geodesic from $g_i$ to $\zeta_i$ passes within a bounded
  distance of $C$. After passing to a subsequence these geodesic rays converge
  to a bi-infinite geodesic passing within a bounded distance of $C$, both end
  points of which must be $\xi$. This is a contradiction.
\end{proof}

\begin{lemma}\label{lem:uniform_continuity}
  For any point $\zeta \in \boundary G - \set\xi$, there is a neighbourhood $V$
  of $\zeta$ and a number $i_0$ such that, for all $i \geq i_0$, $V \subset U_i$ 
  and the diameter of $q_i(V)$ with respect to $\newdist$ is at most $6\delta+2$.
\end{lemma}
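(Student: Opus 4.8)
The plan is to reduce the statement, via \cref{lem:describing_q_i}, to a single fellow-travelling estimate for geodesic rays issuing from $g_i$. Recall that \cref{lem:describing_q_i} places $q_i(\zeta')$ in the simplex spanned by the vertices
\begin{equation*}
  N_{2\delta+1}^{g_i\cdot S_{n_i}}\set{\eta(n_i) \suchthat \text{$\eta$ a geodesic ray with $\eta(0)=g_i$ and $\eta(\infty)=\zeta'$}}.
\end{equation*}
Writing $W_i$ for the union of these vertex sets as $\zeta'$ ranges over a neighbourhood $V$ of $\zeta$, every vertex of the minimal simplex containing a point of $q_i(V)$ lies within a distance $2\delta+1$ of $W_i$. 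Hence if I can arrange $\Diam W_i \leq 2\delta$, then any two such vertices are within $(2\delta+1)+2\delta+(2\delta+1)=6\delta+2$ of one another, and the definition of $\newdist$ (taken on minimal simplices) gives $\Diam q_i(V)\leq 6\delta+2$ directly. So the whole problem becomes: find $V$ and $i_0$ so that for $i \geq i_0$, any two rays $\eta_1,\eta_2$ from $g_i$ to points $\zeta_1,\zeta_2 \in V$ satisfy $\dist(\eta_1(n_i),\eta_2(n_i)) \leq 2\delta$.

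Next I would reduce this fellow-travelling bound to a lower bound on a Gromov product. Since $\eta_1,\eta_2$ issue from the common point $g_i$, slimness of the ideal triangle with vertices $g_i,\zeta_1,\zeta_2$ shows that $\eta_1(n_i)$ lies $\delta$-close to $[g_i,\zeta_2]$ (rather than to the far side $[\zeta_1,\zeta_2]$) as soon as $n_i$ is a bounded multiple of $\delta$ below $\gromov{g_i}{\zeta_1}{\zeta_2}$; comparing parameters then yields $\dist(\eta_1(n_i),\eta_2(n_i)) \leq 2\delta$. Thus it suffices to choose $V$ and $i_0$ so that $\gromov{g_i}{\zeta_1}{\zeta_2} \geq n_i + O(\delta)$ for all $\zeta_1,\zeta_2 \in V$ and $i \geq i_0$.

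The heart of the argument is this Gromov-product estimate, and it is where I expect the main difficulty. Fix a compact neighbourhood $C$ of $\zeta$ in $\boundary G - \set\xi$; by \cref{lem:bounded_intersection} (through \cref{lem:nonempty_intersection}) there is a constant $R$, depending only on $C$, $\delta$ and $v$, with $\dist(v,[\xi,\zeta']) \leq R$, hence $\gromov{v}{\xi}{\zeta'} \leq R + O(\delta)$, for every $\zeta' \in C$; and $C \subset U_i$ for $i$ large by the preceding lemma. The picture to keep in mind is the tree model: $g_i$ hangs off the ray $[v,\xi]$ at distance $\gromov{v}{g_i}{\xi}$ from $v$, each $[\xi,\zeta_j]$ meets $[v,\xi]$ within $R$ of $v$, and $\zeta_1,\zeta_2$ branch apart only at distance $\gromov{v}{\zeta_1}{\zeta_2}$ from $v$. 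Reading off the median of $g_i,\zeta_1,\zeta_2$, once $\gromov{v}{g_i}{\xi} > \gromov{v}{\xi}{\zeta_1}$, gives in the tree
\begin{equation*}
  \gromov{g_i}{\zeta_1}{\zeta_2} = n_i - 2\gromov{v}{\xi}{\zeta_1} + \gromov{v}{\zeta_1}{\zeta_2} \geq n_i - 2R + \gromov{v}{\zeta_1}{\zeta_2},
\end{equation*}
and the same holds up to an additive $O(\delta)$ in $G$. I would make this rigorous by the standard tree approximation of the five points $v,g_i,\xi,\zeta_1,\zeta_2$, approximating the three ideal points by points far out along geodesics and passing to the limit, which reproduces the tree relations among the Gromov products up to $O(\delta)$. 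The subtle point, and the reason the naive change-of-basepoint bound $\abs{\gromov{g_i}{\cdot}{\cdot}-\gromov{v}{\cdot}{\cdot}} \leq \dist(v,g_i)=n_i$ is useless, is that the basepoint $g_i$ recedes to $\xi$ while $n_i \to \infty$; the estimate is rescued precisely by the hypothesis $g_i \to \xi$, which forces $g_i$ onto the $\xi$-side so that both rays $[g_i,\zeta_j]$ run back through the bounded region near $v$ before diverging, making the fellow-travelling length grow like $n_i$ rather than staying bounded.

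Finally I would assemble the constants. Choose $i_0$ so large that, for $i \geq i_0$, we have $C \subset U_i$ and $\gromov{v}{g_i}{\xi} > R + O(\delta)$ (possible since $g_i \to \xi$), and choose $V \subset C$ a small enough neighbourhood of $\zeta$ that $\gromov{v}{\zeta_1}{\zeta_2} \geq 2R + O(\delta)$ for all $\zeta_1,\zeta_2 \in V$ (possible since this Gromov product tends to infinity as $\zeta_1,\zeta_2 \to \zeta$). The displayed estimate then gives $\gromov{g_i}{\zeta_1}{\zeta_2} \geq n_i + O(\delta)$, whence $\dist(\eta_1(n_i),\eta_2(n_i)) \leq 2\delta$, so $\Diam W_i \leq 2\delta$ and $\Diam q_i(V) \leq 6\delta+2$ with respect to $\newdist$, as required.
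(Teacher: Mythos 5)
Your proposal is correct, and the opening reduction is exactly the paper's: both arguments boil the lemma down to showing that any two geodesic rays from $g_i$ to points of $V$ satisfy $\dist(\eta_1(n_i),\eta_2(n_i))\leq 2\delta$, and both then recover the stated bound from \cref{lem:describing_q_i} via $(2\delta+1)+2\delta+(2\delta+1)=6\delta+2$. Where you genuinely diverge is in how the fellow-travelling is obtained. The paper chooses a point $x$ on $[\xi,\zeta]$ whose forward sub-ray stays at distance at least $9\delta+2$ from $H$, takes $V$ to be the endpoints of geodesics from $\xi$ passing within $3\delta+1$ of $x$, and chases thin triangles to show every ray from $g_i$ to a point of $V$ passes within $4\delta+1$ of $x$; since the points $\gamma_j(n_i)$ lie in $H$, hence far beyond $x$, the rays are still $2\delta$-close at parameter $n_i$. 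You instead base everything at $v$ and prove the single inequality $\gromov{g_i}{\zeta_1}{\zeta_2}\geq n_i+O(\delta)$ by a five-point tree approximation, feeding in $\gromov{v}{\xi}{\zeta'}\leq R+O(\delta)$ (correctly extracted from \cref{lem:nonempty_intersection} together with \cref{lem:bounded_intersection}), $\gromov{v}{g_i}{\xi}\to\infty$, and $\gromov{v}{\zeta_1}{\zeta_2}\to\infty$ as $V$ shrinks; your tree identity $\gromov{g_i}{\zeta_1}{\zeta_2}=n_i-2\gromov{v}{\xi}{\zeta_1}+\gromov{v}{\zeta_1}{\zeta_2}$ is the right one, and you correctly identify the hypotheses ($\gromov{v}{g_i}{\xi}$ and $\gromov{v}{\zeta_1}{\zeta_2}$ both exceeding $\gromov{v}{\xi}{\zeta_j}$ by a definite margin) under which it holds up to $O(\delta)$. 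Your route is conceptually cleaner and makes no further reference to $H$; its cost is reliance on the tree-approximation machinery and looser $O(\delta)$ bookkeeping, which you would need to tighten to land exactly on $6\delta+2$ --- this is unproblematic, since the margin in your Gromov-product bound can be made an arbitrarily large multiple of $\delta$ by shrinking $V$ and enlarging $i_0$, after which the $2\delta$ fellow-travelling bound follows exactly as you describe.
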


\begin{proof}
  Let $\zeta \in \boundary G - \set\xi$ and let $\alpha$ be a bi-infinite
  geodesic from $\xi$ to $\zeta$. Let $x$ be a point on $\alpha$ so that the
  distance between the sub-ray of $\alpha$ from $x$ to $\zeta$ and the set $H$
  is at least $9\delta+2$.

  Let $V$ be the following neighbourhood of $\zeta$.
  \begin{equation*}
    V = \set{\beta(\infty) \suchthat \text{$\beta$ a geodesic with
            $\beta(-\infty) = \xi$ and $\dist(\beta, x) \leq 3\delta+1$}}.
  \end{equation*}
  The neighbourhood $V$ is relatively compact in $\boundary G - \set\xi$ so
  we may take $i_0$ such that $V \subset U_i$ for $i \geq i_0$. Enlarge $i_0$
  so that for any $i \geq i_0$, $\dist([\xi, g_i], x) \geq 4\delta+1$.

  Now let $\zeta_1$ and $\zeta_2$ be points in $V$ and let $i \geq i_0$. For
  $j$ equal to $1$ or $2$ let $\gamma_j$ be any geodesic ray from $g_i$ to
  $\zeta_j$ and let $\beta_j$ be a bi-infinite geodesic with $\beta_j(-\infty) =
  \xi$, $\beta_j(\infty) = \zeta_j$ and $\dist(\beta_j, x) \leq 3\delta+1$; let
  this minimal distance be realised by a point $y_j \in \beta_j$. By
  considering a triangle with vertices $\xi$, $g_i$ and $\zeta_j$, $\dist(y_j,
  \gamma_j) \leq \delta$, so $\dist(\gamma_j, x) \leq 4\delta+1$; let $z_j \in
  \gamma_j$ minimise this minimal distance. Then $\dist(z_1, z_2) \leq
  8\delta+2$.

  Now, $\dist(x, H) \geq 9\delta+2$, so $\dist(z_j, H) \geq 5\delta+1$.
  Furthermore, $\gamma_j(n_i) \in H$. It follows that $\dist(\gamma_j(n_i), [z_1,
  z_2]) > \delta$ and so by considering a triangle with vertices $g_i$, $z_1$
  and $z_2$, $\dist(\gamma_1(n_i), \gamma_2(n_i)) \leq 2\delta$. The result
  follows by lemma~\ref{lem:describing_q_i}. 
\end{proof}

\section{The double dagger condition}\label{sec:double_dagger}

We begin this section by recalling work of Bestvina and Mess, Bowditch, and
Swarup, which characterise the hyperbolic groups with locally path connected
boundary. In~\cite{bestvinamess91}, Bestvina and Mess introduce the following
condition that a hyperbolic group might satisfy. (When comparing the constants
appearing in our definition of the condition with that in the cited paper, note
that with our convention that ideal triangles are $\delta$-hyperbolic, Bestvina
and Mess's constant $C$ can be taken to be equal to $\delta$.)

\begin{definition}
  We say that $G$ satisfies $\ddag_M$ if there exists a number $L$ such that,
  for any $n$ and any $x$ and $y$ in $S_n$ with $\dist(x,y) \leq M$, $x$ and
  $y$ are connected by a path of length at most $L$ in the complement of the
  ball with radius $\dist(x,y) - \delta$ with centre $1 \in G$.
\end{definition}

This property is linked to the connectedness of $\boundary G$ by the following
propositions:

\begin{proposition}\cite[Proposition 3.2]{bestvinamess91}
  If $G$ satisfies $\ddag_{8\delta + 3}$ then $\boundary G$ is locally path
  connected.
\end{proposition}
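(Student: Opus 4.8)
The plan is to prove the statement directly by establishing the stronger property of \emph{uniform} local path connectedness in the visual metric $\rho$: for every $\epsilon > 0$ I will produce a $\rho_0 > 0$ so that any two points $\xi, \eta \in \boundary G$ with $\rho(\xi,\eta) < \rho_0$ are joined by a path in $\boundary G$ of diameter at most $\epsilon$. Since $\boundary G$ is compact and metrizable, this is equivalent to local path connectedness: applying the property with $\epsilon/2$ exhibits, around each point $\xi$, the small ball $\set{\zeta \suchthat \rho(\xi,\zeta) < \rho_0/2}$ as a subset of the path component of $\xi$ inside $\set{\zeta \suchthat \rho(\xi,\zeta) < \epsilon}$, so every point has arbitrarily small path-connected neighbourhoods.

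First I would convert visual closeness into geodesic fellow-travelling. If $\rho(\xi,\eta) < \rho_0$ then $\gromov{e}{\xi}{\eta} > N := \log_a(k_1/\rho_0)$. Choosing geodesic rays $\alpha, \beta$ from $e$ to $\xi$ and $\eta$ and setting $N' := \lfloor N\rfloor - 3\delta$, \cref{lem:boundary_gromov_product} gives $\gromov{e}{\alpha(N')}{\beta(N')} \geq N' - 3\delta$, that is $\dist(\alpha(N'), \beta(N')) \leq 6\delta \leq 8\delta+3$. Thus $\alpha(N')$ and $\beta(N')$ are two points of $S_{N'}$ at distance at most $8\delta+3$, and $\ddag_{8\delta+3}$ supplies an edge-path $\sigma^{(0)}$ between them of length at most $L$, lying outside the ball of radius $N'-\delta$ about $e$.

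The heart of the argument is a telescoping construction that converts $\sigma^{(0)}$ into a genuine boundary path by marching it steadily outwards and taking a uniform limit. Suppose inductively that $\sigma^{(m)}$ is an edge-path between points $\alpha(t_m)$ and $\beta(t_m)$ (with $t_m \to \infty$) lying at distance at least $N' + m - \delta$ from $e$. Fix $s := \lceil\delta\rceil+1$ and push each vertex $v$ of $\sigma^{(m)}$ out by $s$ along a geodesic ray from $e$ through $v$, to a point $v^{+}$. Consecutive vertices lie at distance $1$, so their pushes lie within $2s+2 \leq 8\delta+3$ of each other (taking $\delta \geq 1$, as one may); equalising radii by moving one of each pair at most one unit, $\ddag_{8\delta+3}$ joins each consecutive pair by a rung of length at most $L$, and concatenating these rungs yields $\sigma^{(m+1)}$, running from $\alpha(t_{m+1})$ to $\beta(t_{m+1})$. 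The point of pushing out by $s > \delta$ is that the inward dip of at most $\delta$ of each reconnecting $\ddag$-rung is outweighed, so $\sigma^{(m+1)}$ still lies at distance at least $N'+m+1 - \delta$ from $e$, preserving the invariant; meanwhile only distance-$\le 8\delta+3$ reconnections are ever made, so the hypothesis is always legitimately applied even though the combinatorial length of $\sigma^{(m)}$ may grow geometrically in $m$.

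Finally I would project to the boundary and pass to the limit. Sending each point of $\sigma^{(m)}$ to the endpoint of a geodesic ray through it gives a boundary path $F_m$ from $\xi$ to $\eta$ (the extreme vertices being extended along $\alpha$ and $\beta$ themselves). Since $\sigma^{(m)}$ lies at distance at least $N'+m - \delta$ from $e$ and its rungs have length at most $L$, two of its points at bounded Cayley distance have Gromov product at least $N'+m - O(\delta)$, so consecutive projected vertices, and also $F_m$ and $F_{m+1}$ under the natural reparametrisation, differ in $\rho$ by at most a constant (depending only on $L, \delta, a, k_2$) times $a^{-(N'+m)}$. Hence $(F_m)$ is uniformly Cauchy and converges to a continuous path $F$ from $\xi$ to $\eta$ with $\Diam F$ bounded by a constant multiple of $a^{-N'}$; since $a^{-N'} \leq a^{3\delta+1}\rho_0/k_1$, the diameter is at most $\epsilon$ once $\rho_0$ is small. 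The main obstacle is precisely producing a \emph{path} rather than a merely connected ``shadow'': a single radial projection of $\sigma^{(0)}$ has no reason to be continuous, since radial projection $G \to \boundary G$ is ill defined at branch loci. The telescoping circumvents this, and its delicate point is the radial-width control of the $\ddag$-rungs — resolved by advancing at a fixed rate $s > \delta$ so that only distance-$(8\delta+3)$ reconnections occur and the rungs genuinely march to infinity — which ensures the combinatorial blow-up of the $\sigma^{(m)}$ stays invisible in the visual metric, where each level contributes only the geometrically decaying amount $a^{-(N'+m)}$.
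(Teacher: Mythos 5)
The paper gives no proof of this proposition; it is quoted directly from Bestvina and Mess, and your argument is essentially a correct reconstruction of their proof: reduce visual closeness to fellow-travelling on a sphere, apply $\ddag_{8\delta+3}$ once, telescope the resulting chain outward by repeatedly pushing along near-radial rays and reconnecting with $\ddag$-rungs, and extract the boundary path from the geometric decay $a^{-(N'+m)}$ of the visual size of each level. The one place your write-up is loose is the last step: the radial projections $F_m$ are not themselves continuous paths, so ``uniformly Cauchy sequence of paths'' should really be phrased as a uniformly Cauchy sequence of (possibly discontinuous) functions together with the observation that the oscillation of $F_m$ over a single level-$m$ parameter edge is $O(a^{-(N'+m)})$, which by a three-term triangle inequality makes the uniform limit continuous; you already have exactly the estimates needed for this, so it is a presentational rather than a mathematical gap.
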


\begin{proposition}\cite[Proposition 3.3]{bestvinamess91}\label{prop:BM2}
  If $\boundary G - \set\xi$ is connected for all $\xi \in \boundary G$ then
  $G$ satisfies $\ddag_M$ for any $M$.
\end{proposition}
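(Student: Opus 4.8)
The plan is to prove the contrapositive: if $\ddag_M$ fails for some $M$, then $\boundary G$ has a cut point, that is, there is a point $\xi \in \boundary G$ for which $\boundary G - \set\xi$ is disconnected. This is the strategy of Bestvina and Mess~\cite{bestvinamess91}, and it runs along the same lines as the limiting constructions of Section~\ref{sec:horoball}: one extracts a sequence of ``bad'' configurations witnessing the failure, normalises them by the group action, passes to a geometric limit using local finiteness, and reads off a separation of the complement of the limiting point.

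First I would extract the bad data. If no $L$ witnesses $\ddag_M$, then for each $L \in \naturals$ there are a radius $n_L$ and points $x_L, y_L \in S_{n_L}$ with $\dist(x_L, y_L) \leq M$ that cannot be joined, inside the complement of the relevant ball about the basepoint, by a path of length at most $L$. We may assume $G$ is one-ended (the remaining cases are immediate, as the spheres are uniformly bounded), so both $x_L$ and $y_L$ lie in the unique unbounded component of the complement of the ball and are therefore joined by \emph{some} finite path in it. Since each sphere $S_{n_L}$ is finite, there are only finitely many such pairs at any fixed radius, so the radii $n_L$ must tend to infinity; the obstruction thus lives near infinity, and $\set{x_L, y_L}$ records a persistent pinch in the far part of $S_{n_L}$.

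Next I would normalise the pinches and pass to a limit. Choose $g_L \in G$ carrying $x_L$ to a fixed vertex $v$; then $g_L\cdot y_L$ lies within distance $M$ of $v$, while the translated basepoints $g_L\cdot e$ recede to infinity because $\dist(e, x_L) = n_L \to \infty$. As the Cayley graph is locally finite, after passing to a subsequence the translated pinches $g_L\set{x_L, y_L}$ stabilise and the points $g_L\cdot e$ converge to a point $\xi \in \boundary G$. The key geometric feature is that, after this translation, the constraint of avoiding the ball about the basepoint becomes, in the limit, the constraint of avoiding $\xi$: a path staying out of the receding ball limits to one staying in $\boundary G - \set\xi$, in exactly the way the receding balls of Section~\ref{sec:horoball} cut out the horoball asymptotic to $\xi$ (cf.\ Lemma~\ref{lem:nonempty_intersection}).

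The crux of the argument, and the step I expect to be the main obstacle, is to show that the stabilised pinch genuinely separates $\boundary G - \set\xi$. Suppose it did not, so that $\boundary G - \set\xi$ were connected. Extending geodesic rays outward through the two sides of the pinch and taking limits produces two points $\eta_1, \eta_2 \in \boundary G - \set\xi$ on opposite sides, and a path joining them in $\boundary G - \set\xi$ would have compact image $C \subset \boundary G - \set\xi$. Using a compactness argument of Arzel\`a--Ascoli type, analogous to Lemmas~\ref{lem:bounded_intersection} and~\ref{lem:uniform_continuity}, one would push this path forward, for every sufficiently large $L$, to a path in $S_{n_L}$ of length bounded independently of $L$ that joins $x_L$ to $y_L$ while avoiding the prescribed ball. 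Taking $L$ larger than this uniform bound contradicts the defining property of $\set{x_L, y_L}$, so $\boundary G - \set\xi$ must be disconnected. The delicate points throughout are the bookkeeping of the hyperbolicity constants across the limit, so that the pushed-forward path provably avoids the ball of the prescribed radius, and the verification that its length is bounded uniformly in $L$; these are precisely the estimates that the machinery of Section~\ref{sec:horoball} is built to provide.
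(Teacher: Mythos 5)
Your overall architecture is the right one: it is essentially Bestvina and Mess's own argument (the paper only cites~\cite[Proposition 3.3]{bestvinamess91} and does not reprove it), and it is also the template for the closest in-paper analogue, the proof of Proposition~\ref{prop:sufficient_for_ddag_p} --- extract a sequence of violating pairs, normalise by translations, pass to the limiting horoball $H$ asymptotic to $\xi$ as in Section~\ref{sec:horoball}, and play the connectivity of $\boundary G - \set\xi$ against the failure of $\ddag_M$ via the projection machinery of Lemmas~\ref{lem:nonempty_intersection}--\ref{lem:uniform_continuity}. But there is a genuine gap at your crux step: you join $\eta_1$ to $\eta_2$ by a \emph{path} in $\boundary G - \set\xi$. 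The hypothesis is only that $\boundary G - \set\xi$ is \emph{connected}, and connected does not imply path connected. You also cannot repair this by appealing to the fact that connected boundaries of one-ended hyperbolic groups are (locally) path connected: that fact is a consequence of the Bestvina--Mess theorem together with Bowditch~\cite{bowditch98b} and Swarup~\cite{swarup96}, of which the present proposition is an ingredient, so invoking it here is circular. This is precisely why the paper's Proposition~\ref{prop:sufficient_for_ddag_p} is stated with the stronger hypothesis of path connectedness and why its proof is permitted to use local path connectedness of $\boundary G$: there the Bestvina--Mess machinery is already available as a black box, whereas here it is what is being established.

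The standard way to close the gap is to run the contradiction through the horoball rather than through paths in the boundary. After normalisation and stabilisation, the failure of $\ddag_M$ for every $L$ says that $x_0=\lim g_L\cdot x_L$ and $y_0=\lim g_L\cdot y_L$ cannot be joined by \emph{any} finite edge path in $H$: a path of combinatorial length $\ell$ would transport back, for every sufficiently large $L>\ell$ (after the usual adjustment of constants), to a forbidden short path in the sphere. Hence $H$ splits as a disjoint union of two non-empty unions of components $A\ni x_0$ and $B\ni y_0$ with no edges between them. Pulling this back along the geodesic projection --- every geodesic from $\xi$ meets $H$ in a non-empty bounded set by Lemma~\ref{lem:nonempty_intersection}, and nearby boundary points project to adjacent simplices by the arguments of Lemmas~\ref{lem:describing_q_i} and~\ref{lem:uniform_continuity} --- yields two disjoint non-empty open sets covering $\boundary G - \set\xi$, with the limits of the rays through $x_L$ and $y_L$ supplied by Lemma~\ref{lem:near_geod_rays} witnessing non-emptiness of each side. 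This uses only point-set connectedness. Alternatively, you can keep your outline but replace the path by a finite chain of overlapping open sets from a suitable cover (which connectedness does provide, e.g.\ by the neighbourhoods $V$ of Lemma~\ref{lem:uniform_continuity}); the chain is fixed once and for all, and the stabilisation of the projections $q_i$ on a fixed compact set (Lemma~\ref{lem:affinehomotopic}) then gives the length bound uniformly in $L$. Either repair lands you on Bestvina and Mess's actual proof.
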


Taken together with the following theorem, first proved by
Bowditch~\cite{bowditch98b} when $G$ is strongly accessible and then by
Swarup~\cite{swarup96} in the general case, these propositions show that
$\boundary G$ is locally path connected if and only if it is connected.

\begin{theorem}\cite{bowditch98b,swarup96}
  If $\boundary G$ is connected then $\boundary G$ does not contain a cut
  point.
\end{theorem}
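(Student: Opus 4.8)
The plan is to argue by contradiction: suppose $\boundary G$ is connected (equivalently, $G$ is one-ended) but that $\boundary G$ contains a cut point $c$, so that $\boundary G - \set{c}$ is disconnected. The strategy exploits the fact that $G$ acts on $\boundary G$ as a uniform convergence group, so any \emph{canonical} structure built from the cut points of $\boundary G$ automatically carries a $G$-action, and splitting theory can then be brought to bear. The aim is to produce from a single cut point a non-trivial splitting of $G$ that is incompatible with one-endedness.

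First I would encode the cut-point structure of the continuum $\boundary G$ in a tree-like object. Following Bowditch, one associates to a continuum $M$ a \emph{cut-point pretree} whose elements record the cut points of $M$ together with the maximal cut-point-free sub-continua (the cyclic elements), and then completes this to an $\reals$-tree $T$. Because the construction depends only on the topology of $M$, the convergence action of $G$ on $\boundary G$ induces an isometric action of $G$ on $T$. The presence of a genuine cut point guarantees that $T$ is non-degenerate, and a minimality argument (passing to the unique minimal invariant subtree) shows that the action has no global fixed point.

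Next I would analyse this action using the structure theory of groups acting on $\reals$-trees. The arc stabilisers of $T$ correspond to stabilisers of cut points under the convergence action, and the dynamics of convergence groups force these to be finite or two-ended. Applying the Rips machine in the Bestvina--Feighn form, a non-trivial action with such small arc stabilisers yields a splitting of $G$ as an amalgamated product or HNN extension over a finite or two-ended subgroup. A splitting over a finite subgroup is impossible by Stallings' theorem, since $G$ is one-ended, so the splitting must be over a two-ended subgroup, and the boundaries of the vertex groups embed, essentially as the closures of the pieces into which the cut points cut $\boundary G$.

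Finally, I would set up a descent on the complexity of these splittings: one shows that a cut point must persist in some proper vertex group, and iterates, producing ever-finer splittings. The main obstacle is ensuring that this descent terminates, which is exactly the delicate point in the history of the result. Bowditch's original argument secures termination by invoking strong accessibility of $G$ (Delzant--Potyagailo), valid when $G$ is known to be strongly accessible; Swarup's contribution is to dispense with this hypothesis by analysing the minimal action on $T$ directly, showing that the hypothesised cut point already forces a contradiction without an unbounded tower of splittings. I expect this termination/accessibility step, together with the verification that the arc stabilisers are genuinely two-ended, to be the crux of the argument.
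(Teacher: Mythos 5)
This theorem is not proved in the paper at all: it is imported as a black box from the literature, cited to Bowditch and Swarup, and the only ``proof'' the paper offers is the attribution (Bowditch for strongly accessible groups, Swarup in general). So there is nothing internal to compare your argument against; what you have written is an outline of the external literature proof, and it should be judged as such. As a roadmap it is broadly faithful to the actual Bowditch--Swarup strategy: cut points of the continuum $\boundary G$ give a pretree, which completes to a dendrite/$\reals$-tree with an induced $G$-action, Rips theory converts a non-trivial such action into a splitting over a finite or two-ended subgroup, and one-endedness plus a descent argument yields the contradiction.

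However, as a proof it has concrete gaps at exactly the points where the real work lives. First, the action you get on the completed tree is a priori only by homeomorphisms preserving the pretree structure (a ``non-nesting'' action), not an isometric one; passing to an isometric, \emph{stable} action on an $\reals$-tree requires Levitt's theorem on non-nesting actions (or Bowditch's substitute for it), and stability must be verified before the Bestvina--Feighn machine applies. Second, your description of the edge groups is slightly off: the pointwise stabiliser of a non-degenerate arc fixes infinitely many cut points of $\boundary G$ and is therefore \emph{finite} by convergence-group dynamics; the two-ended edge groups arise as the Rips-machine output (finite-by-cyclic extensions of arc stabilisers), not because arc stabilisers are themselves two-ended. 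Third, and most seriously, the termination of the descent is the entire content of Swarup's contribution, and you explicitly leave it as ``the crux'' without supplying it; you also attribute Bowditch's strong-accessibility hypothesis to Delzant--Potyagailo, whose proof of strong accessibility in fact postdates both papers. So the proposal is a reasonable summary of a multi-paper theorem, but it is not a self-contained proof, and the paper itself never attempts one.
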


\subsection{A modified condition}

In this section we introduce a related condition. This condition appears
stronger than $\ddag_M$, but, using Bestvina and Mess's result, we show that it
holds under the same assumption. This condition reinterprets the paths in
$\ddag_M$ as edge paths in the complexes $K_n$, with the additional assumption
that these paths approximate paths that factor through the projection map
$p^\infty_n$. This interpretation strengthens the link between
path-connectedness of the boundary and connectedness of the complexes $K_n$.

\begin{definition}
  Let $I$ be the interval $[0,1]$, which we identify with a 1-simplex.  

  We say that $G$ satisfies $\ddag'_M$ if there exist numbers $L$ and $n_0$
  such that, for any $n \geq n_0$, any $\zeta_1$ and $\zeta_2$ in $\boundary G$
  such that $\newdist(p^\infty_n(\zeta_1), p^\infty_n(\zeta_2)) \leq M$, and
  any vertices $x_1$ and $x_2$ of simplices of $K_n$ containing
  $p^\infty_n(\zeta_1)$ and $p^\infty_n(\zeta_2)$, there is a path $\alpha\from
  I\to \boundary G$ from $\zeta_1$ to $\zeta_2$ in $\boundary G$ such that
  $p^\infty_n\composed\alpha$ admits a simplicial approximation connecting
  $x_1$ to $x_2$ in $K_n$ after at most $L$ barycentric subdivisions of $I$.
\end{definition}

\begin{proposition}\label{prop:sufficient_for_ddag_p}
  If $\boundary G - \set\xi$ is path connected then $G$ satisfies $\ddag'_M$
  for any $M$.
\end{proposition}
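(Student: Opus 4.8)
The plan is to derive $\ddag'_M$ from the weaker-looking condition $\ddag_M$, which by Proposition~\ref{prop:BM2} holds for all $M$ under the hypothesis, together with the projection and approximation machinery built up in Sections~\ref{sec:spheres} and~\ref{sec:horoball}. Let me think through whether this is the right strategy and what the main difficulty will be.

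Let me reconsider. The condition $\ddag'_M$ asks for a path $\alpha$ in $\boundary G$ between $\zeta_1$ and $\zeta_2$ whose image under $p^\infty_n$ can be tracked, after a bounded number of subdivisions, by a simplicial edge path in $K_n$ from $x_1$ to $x_2$. The hypothesis is that $\boundary G - \{\xi\}$ is path connected, which gives us paths in the boundary but a priori no control over the complexes $K_n$. The natural engine for converting boundary paths into controlled simplicial data is Proposition~\ref{prop:projectstoasimplex}: a subset of $\boundary G$ of small visual diameter projects into a single simplex of $K_n$.

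So here is the approach I would take. First, I would observe that $\newdist(p^\infty_n(\zeta_1), p^\infty_n(\zeta_2)) \leq M$ forces $\zeta_1$ and $\zeta_2$ to be visually close, up to the scale $a^{-n}$, because $p^\infty_n$ is built from geodesics whose behaviour at radius $n$ determines the Gromov product (this is essentially the content of Lemma~\ref{lem:boundary_gromov_product} and its use in Proposition~\ref{prop:projectstoasimplex}, read backwards). Thus $\zeta_1$ and $\zeta_2$ lie in a set $U$ of visual diameter comparable to $M a^{-n}$, and for the threshold in Proposition~\ref{prop:projectstoasimplex} this means any path between them through sufficiently nearby points will project simplex-by-simplex.

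\begin{proof}[Proof of Proposition~\ref{prop:sufficient_for_ddag_p}]
By Proposition~\ref{prop:BM2}, the hypothesis that $\boundary G - \set\xi$ is path connected for every $\xi$ in particular implies that $\boundary G$ is connected with no cut point, so $G$ satisfies $\ddag_M$ for every $M$; fix the resulting constant $L$ for the value $M$ at hand. I would then fix $n \geq n_0$ and $\zeta_1, \zeta_2 \in \boundary G$ with $\newdist(p^\infty_n(\zeta_1), p^\infty_n(\zeta_2)) \leq M$, together with the chosen vertices $x_1, x_2$. The key geometric step is to show that this hypothesis bounds the Gromov product $\gromov{e}{\zeta_1}{\zeta_2}$ below by roughly $n - O(M + \delta)$: by \cref{rem:dist_vs_newdist} the word distance between $x_1$ and $x_2$ is at most $M + 2D$, and since the $x_j$ approximate the values $\gamma_j(n)$ of geodesic rays to $\zeta_j$, the rays remain close out to radius $n$, which pins the Gromov product.

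Next I would use the $\ddag_M$ path to build the edge path in $K_n$. The condition $\ddag_M$ supplies, for the pair of sphere points near $x_1$ and $x_2$, a path of length at most $L$ in the complement of a large ball; lifting the vertices of this path back to $\boundary G$ via geodesic rays through them produces a sequence $\zeta_1 = \omega_0, \omega_1, \dotsc, \omega_k = \zeta_2$ of boundary points with $k \leq L$ and with consecutive $\omega_j$ visually close. Concatenating short boundary paths between consecutive $\omega_j$ gives the required $\alpha \from I \to \boundary G$. Because consecutive points are visually close at the scale controlled above, Proposition~\ref{prop:projectstoasimplex} guarantees that the image under $p^\infty_n$ of each sub-path lies in a single simplex of $K_n$, and the vertices of these consecutive simplices are within distance $D$ of one another, so they span edges of $K_n$. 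After at most $L$ barycentric subdivisions of $I$, matching the subdivision to this decomposition, the map $p^\infty_n \composed \alpha$ admits a simplicial approximation whose vertices trace out an edge path from $x_1$ to $x_2$, as required.
\end{proof}

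The step I expect to be the main obstacle is the quantitative bookkeeping that converts ``$\newdist$-close projections'' into ``visually close boundary points at the right scale,'' and then back into ``consecutive simplices sharing an edge.'' Proposition~\ref{prop:projectstoasimplex} is stated as a one-directional implication (small visual diameter forces a single simplex), so I must verify that the scale $n \leq \log_a(k_1/\rho_0)$ appearing there is compatible with the scale forced by $\newdist(p^\infty_n(\zeta_1), p^\infty_n(\zeta_2)) \leq M$; the constants $D \geq 12\delta$ and the additive errors $2D$ in \cref{rem:dist_vs_newdist} have to be tracked carefully so that the number of subdivisions stays bounded by a constant $L$ independent of $n$. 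The honest difficulty is arranging that the $\ddag_M$ path, which lives in the sphere $S_n$, and the projected path $p^\infty_n \composed \alpha$, which lives in $K_n$, can be matched simplex-by-simplex with a uniformly bounded discrepancy; everything else is an assembly of the already-proved lemmas.
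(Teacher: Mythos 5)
Your proposal takes a genuinely different route from the paper, and it has a real gap at exactly the point you flag as ``the main obstacle.'' The crux of $\ddag'_M$ is that the number $L$ of barycentric subdivisions must be independent of $n$. In your construction this uniformity has to come from the step ``concatenating short boundary paths between consecutive $\omega_j$'': you need, for points of $\boundary G$ at visual distance $\lesssim a^{-n}$, a connecting path that can be covered by a \emph{bounded} number of sets of visual diameter $\leq k_1 a^{-n}$ (so that \cref{prop:projectstoasimplex} applies piece by piece with boundedly many pieces). That is a scale-invariant, quantitative form of local path connectedness (essentially linear connectedness plus a bounded covering multiplicity at scale $a^{-n}$). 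Neither \cref{prop:BM2} nor Bestvina--Mess's Proposition 3.2, as cited, supplies this: qualitative local path connectedness gives paths of diameter $<\epsilon$ only between points closer than some $\delta(\epsilon)$, with no control of $\delta(\epsilon)/\epsilon$ across scales, and even a path of diameter $Ca^{-n}$ may require unboundedly many pieces of diameter $k_1a^{-n}$ to cover. So the constant $L$ you produce could a priori depend on $n$, which is precisely what the condition forbids. (A secondary, fixable point: the hypothesis that $\boundary G - \{\xi\}$ is \emph{path} connected is never really used in your argument beyond feeding \cref{prop:BM2}; that is a symptom that the argument is not engaging with the actual content of the statement.)

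The paper closes this gap not by quantitative bookkeeping but by a compactness/renormalization argument using the group action: assuming $\ddag'_M$ fails, it translates the putative counterexamples at radius $n_i$ by group elements $g_i$ into the fixed horoball $H$ of Section~\ref{sec:horoball}, extracts limits $\zeta_1,\zeta_2\in\boundary G-\{\xi\}$, and then uses path connectedness of $\boundary G-\{\xi\}$ \emph{once}, to produce a single path $\beta$ in a fixed compact set $C$. Since $\beta$ is the same for all $i$, it admits one fixed decomposition into finitely many pieces of diameter $\leq\epsilon$, and \cref{lem:uniform_continuity} shows each piece has bounded image under every $q_i$; translating back by $g_i^{-1}$ yields paths admitting simplicial approximations after a number of subdivisions that is bounded uniformly in $i$, contradicting the assumed failure. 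This is the idea your proposal is missing: the uniformity over $n$ is obtained by rescaling all counterexamples to a common picture via the $G$-action, not by tracking constants in a direct construction. If you want to pursue your direct route, you would need to first establish (and prove, not just cite) a linear-connectedness statement for $\boundary G$ with bounded covering numbers at every scale, which is a substantially stronger input than anything stated in Sections~\ref{sec:spheres}--\ref{sec:double_dagger}.
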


\begin{proof}
  Suppose $\ddag'_M$ does not hold. Then for any $i$ there exists a number $n_i
  \geq i$, points $\zeta_1^i$ and $\zeta_2^i$ in $\boundary G$ and vertices
  $x_1^i$ and $y_1^i$ of simplices of $K_{n_i}$ containing
  $p^\infty_{n_i}(\zeta^i_1)$ and $p^\infty_{n_i}(\zeta^i_2)$ such that
  $\newdist(p^\infty_{n_i}(\zeta^i_1), p^\infty_{n_i}(\zeta^i_2)) \leq M$ and
  there is no path $\alpha$ from $\zeta_1^i$ to $\zeta_2^i$ in $\boundary G$
  where $p^\infty_{n_i}\composed\alpha$ admits a simplicial approximation after
  at most $i$ barycentric subdivisions of the interval.

  Take group elements $g_i$ such that $g_i\cdot x_1^i$ is equal to some vertex
  $x_1$ for all $i$ vertices and pass to a subsequence as in
  Section~\ref{sec:horoball}, so that $g_i\to\xi$ as $i \to\infty$. Since
  $\dist(x_1^i, x_2^i)$ is bounded we can pass to a deeper subsequence so that
  $g_i\cdot x_2^i$ is constant equal to some vertex $x_2$ for all $i$, and
  $x_2$ is contained in $H_i$ for all $i$. 

  Now, $g_i\cdot \zeta_1^i$ and $g_i\cdot\zeta_2^i$ map by $q_i$ to points in
  simplices containing $x_1$ and $x_2$, so by Lemma~\ref{lem:propermaps} we
  can pass to a subsequence so that the sequences $g_i\cdot \zeta_1^i$ and
  $g_i\cdot\zeta_2^i$ converge to points $\zeta_1$ and $\zeta_2$ in $\boundary
  G - \set\xi$ as $i \to \infty$.
  
  Let $C$ be a compact subset of $\boundary G - \set\xi$ containing a path
  $\beta$ from $\zeta_1$ to $\zeta_2$ and also a neighbourhood of each of the
  points $\zeta_1$ and $\zeta_2$. By Lemma~\ref{lem:uniform_continuity}, there
  exists $\epsilon > 0$ and $i_0 \in \naturals$ such that for subset $V$ of
  $C$ of diameter at most $\epsilon$ and any $i \geq i_0$, $q_i(V)$ has
  diameter (with respect to $\newdist$) at most $6\delta+2$. By increasing
  $i_0$ we may also assume that $\zeta_1^i$ and $\zeta_2^i$ are contained in
  $C$ for $i \geq i_0$. By decreasing $\epsilon$ we may assume that the
  $\epsilon$-neighbourhoods of $\zeta_1$ and $\zeta_2$ are contained in $C$. 

  Since $\boundary G$ is locally path connected, for $j$ equal to $1$ or $2$
  there exists a neighbourhood of $\zeta_j$ such that any point in this
  neighbourhood is joined to $\zeta_j$ by a path contained in the
  $\epsilon$-neighbourhood of $\zeta_j$. Now increase $i_0$ to ensure that
  $\zeta_1^i$ and $\zeta_2^i$ are contained in these neighbourhoods for $i \geq
  i_0$. For $j=1$ and $j=2$ let $\alpha_1^j$ be a path in the ball of radius
  $\epsilon$ centred at $\zeta_j$ connecting $\zeta_j^i$ to $\zeta_j$.

  Then the concatenation $\alpha^i\from[0,1]\to \boundary G$ of $\alpha_1^i$,
  $\beta$ and $\alpha_2^i$ gives a path in $C$ from $\zeta_1^i$ to $\zeta_2^i$
  such that $I$ can be covered by boundedly many open intervals, each of which
  has image under $\alpha^i$ with diameter at most $\epsilon$. It follows that
  the composition of $\alpha^i$ with $q_i$ is a path in $H$ that admits a
  simplicial approximation connecting $x_1$ to $x_2$ after boundedly many
  subdivisions of the interval. For $i \geq i_0$ large enough this gives a
  contradiction.
\end{proof}

Using this property we define maps $i^n_{n+1}\from \Skel_1 K_n$ to
$\Skel_1K_{n+1}$ as follows.  We will make use of the following lemma, adapted
from from~\cite{bestvinamess91}.

\begin{lemma}\cite{bestvinamess91}\label{lem:near_geod_rays}
  Let $G$ be infinite. For any $x \in X$ there exists a geodesic ray $\alpha$ 
  with $\alpha(0) = e$ and $\dist(x,\alpha) \leq \delta$. 
  
  Furthermore, $\dist(x, \alpha(\dist(e,x))) \leq 2\delta$.
\end{lemma}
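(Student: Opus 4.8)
The plan is to reduce the ``Furthermore'' clause to the first statement and then to build $\alpha$ by a compactness argument. For the reduction, suppose $\alpha$ is a geodesic ray from $e$ with $\dist(x,\alpha)\le\delta$, and let $s\ge 0$ satisfy $\dist(x,\alpha(s))\le\delta$. Writing $n=\dist(e,x)$ and using that $\alpha$ is parametrised by arc length from $e$, so that $\dist(e,\alpha(s))=s$, we get $\abs{s-n}\le\dist(x,\alpha(s))\le\delta$ and hence $\dist(\alpha(s),\alpha(n))=\abs{s-n}\le\delta$. The triangle inequality then gives $\dist(x,\alpha(n))\le 2\delta$, which is the ``Furthermore'' clause. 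So it is enough to find a ray from $e$ passing within $\delta$ of $x$.

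First I would record the standing facts. Since $G$ is infinite and its Cayley graph is proper and geodesic, every sphere $S_m$ is non-empty---otherwise no element would lie at distance $m$ from $e$ and $G$ would be finite---and $\boundary G\neq\emptyset$; moreover, by~\cite[III.H.3]{bridsonhaefliger99} there is a geodesic ray from $e$ to every point of $\boundary G$, and geodesics from $e$ to points receding to infinity subconverge to such rays. The main idea is to choose the direction of the ray so that it fellow-travels $[e,x]$ essentially all the way out to $x$: concretely, I would seek $\xi\in\boundary G$ with $\gromov{e}{x}{\xi}$ equal to $n$ up to an error bounded by $\delta$. Given such a $\xi$, take $\alpha$ to be a geodesic ray from $e$ to $\xi$ and consider the ideal triangle with vertices $e$, $x$ and $\xi$, which is $\delta$-slim by our convention. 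The lower bound on $\gromov{e}{x}{\xi}$ forces the point of $[e,x]$ at which this side parts from $[e,\xi]$ to lie within $\delta$ of $x$, and slimness then places $x$ within $\delta$ of $\alpha=[e,\xi]$, as required.

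The hard part is producing the direction $\xi$, and this is where I expect the real work to lie. The natural attempt is to prolong the geodesic $[e,x]$: using non-emptiness of the spheres I would choose elements $g_k$ with $\dist(e,g_k)\to\infty$ and $x$ within $\delta$ of $[e,g_k]$---equivalently $\gromov{x}{e}{g_k}\le\delta$---and let $\xi$ be a subsequential limit of $(g_k)$, which then satisfies $\gromov{e}{x}{\xi}\ge n-\delta$. The obstruction is that $x$ need not lie on arbitrarily long geodesics issuing from $e$: it may be a local maximum of $\dist(e,\cdot)$, a \emph{dead end}. This is exactly the point at which infiniteness of $G$ is indispensable. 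Translating by $x^{-1}$ turns the task into finding elements arbitrarily far from $e$ that are not hidden in a bounded shadow of $x^{-1}$, which is possible because $\boundary G$ is not concentrated in a single direction; hyperbolicity then bounds how far such a geodesic is forced to double back near $x$. A careful analysis of the slim ideal triangle keeps this within the constant claimed in the statement, and verifying that constant, rather than merely a multiple of $\delta$, is the delicate part of the argument.
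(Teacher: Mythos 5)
Your first paragraph is exactly the paper's proof of the ``Furthermore'' clause: the paper takes a point $p\in\alpha$ with $\dist(x,p)\leq\delta$, uses the reverse triangle inequality to get $\abs{\dist(e,p)-\dist(e,x)}\leq\delta$, hence $\dist(p,\alpha(\dist(e,x)))\leq\delta$, and concludes $\dist(x,\alpha(\dist(e,x)))\leq 2\delta$. That part is correct and identical in substance.

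For the first claim, however, the paper gives no proof at all: it simply cites~\cite{bestvinamess91} (with the remark that the convention that \emph{ideal} triangles are $\delta$-slim absorbs Bestvina and Mess's constant). You instead try to prove it, and your sketch has a genuine gap at precisely the point you flag. The entire content of the lemma is the existence of elements $g_k$ with $\dist(e,g_k)\to\infty$ and $\gromov{x}{e}{g_k}$ bounded by a constant depending only on $\delta$; asserting that this ``is possible because $\boundary G$ is not concentrated in a single direction'' is not an argument, and the naive versions fail (a randomly chosen far-away $w$ gives only $\gromov{x}{e}{w}\leq\dist(e,x)$, which is useless, and picking two distinct boundary points $\eta_1,\eta_2$ and pigeonholing via $\gromov{e}{\eta_1}{\eta_2}\geq\min\set{\gromov{e}{x}{\eta_1},\gromov{e}{x}{\eta_2}}-\delta$ yields a constant depending on $G$, not on $\delta$). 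There is also a quantitative problem downstream: even granted $\xi$ with $\gromov{e}{x}{\xi}\geq\dist(e,x)-\delta$, the slim-triangle argument you describe places $x$ within $\delta$ of a point of $[e,x]$ that is itself only within $\delta$ of $[e,\xi]$, giving $\dist(x,\alpha)\leq 2\delta$ rather than $\delta$ --- and the $2\delta$ in the ``Furthermore'' clause would then degrade to $3\delta$. Since the paper leans on the exact constants from the cited lemma, the clean course here is to do what the paper does and quote Bestvina--Mess for the first claim, keeping your (correct) argument only for the second.
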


\begin{proof}
  For the first claim, see~\cite{bestvinamess91} and recall that ideal
  triangles are assumed to be $\delta$-hyperbolic. For the second, let $p$ be
  a point on $\alpha$ such that $\dist(x, p) \leq \delta$. Then the triangle
  inequality shows that $\abs{\dist(e,p) - \dist(e,x)} \leq \delta$, and so
  $\dist(p, \alpha(\dist(e,x))) \leq \delta$. Then the result follows from
  another application of the triangle inequality.
\end{proof}

\begin{proposition}\label{prop:i_in_dimension_1}
  Suppose that $G$ satisfies $\ddag'_{D + 4\delta+2}$. Then there exist $L$ and
  $n_0$ such that for $n\geq n_0$ there is a map $i^n_{n+1}\from \Skel_1 K_n
  \to \Skel_1 K_{n+1}$ satisfying the following conditions:
  \begin{enumerate}
    \item for each $n$, $i^n_{n+1}$ is a simplicial map on the subdivision
      $\sd^L\Skel_1 K_n$,
    \item for any vertex $x$ of $K_n$, $\dist(x, i^n_{n+1}(x)) \leq 2\delta+1$,
      and
    \item for each $n$, the map $i^n_{n+1}$ is a simplicial approximation a map
      that factors through $p^\infty_{n+1}\from \boundary G \to K_{n+1}$.
  \end{enumerate}
  (When $X$ is a simplicial complex, we denote by $\sd^n X$ the complex
  obtained from $X$ by barycentrically subdividing $n$ times.)
\end{proposition}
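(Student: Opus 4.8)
The plan is to build $i^n_{n+1}$ one dimension at a time: first on the vertices of $K_n$, using the near-geodesic rays of \cref{lem:near_geod_rays}, and then across the edges of $K_n$, using $\ddag'_{D+4\delta+2}$ to join the images of the two endpoints of each edge by a path that factors through $p^\infty_{n+1}$. The hypothesis is invoked at exactly $M = D+4\delta+2$ because an edge of $K_n$ has endpoints at distance at most $D$, and the vertex map will move each endpoint by at most $2\delta+1$.

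First I would define the vertex map. For a vertex $x$ of $K_n$, \cref{lem:near_geod_rays} provides a geodesic ray $\alpha_x$ with $\alpha_x(0) = e$ and $\dist(x,\alpha_x(n)) \leq 2\delta$; I put $i^n_{n+1}(x) = \alpha_x(n+1)$, so that $i^n_{n+1}(x) \in S_{n+1}$ and $\dist(x, i^n_{n+1}(x)) \leq 2\delta+1$, which is condition (2). Since $\alpha_x$ is a ray to $\alpha_x(\infty)$ passing through $i^n_{n+1}(x)$ at time $n+1$, the point $\alpha_x(\infty)$ lies in $U_{n+1}(i^n_{n+1}(x))$, so $i^n_{n+1}(x)$ is a vertex of the nerve. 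I would then choose a boundary point $\zeta_x$ for which $i^n_{n+1}(x)$ lies in the \emph{minimal} simplex carrying $p^\infty_{n+1}(\zeta_x)$; such a $\zeta_x$ lies in $U_{n+1}(i^n_{n+1}(x))$, hence is witnessed by a ray passing within $2\delta+1$ of $i^n_{n+1}(x)$ at time $n+1$.

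Next I would extend over the edges. Let $[x,y]$ be an edge of $K_n$, so $\dist(x,y)\leq D$ and hence $\dist(i^n_{n+1}(x), i^n_{n+1}(y)) \leq D + 4\delta+2$. Because $i^n_{n+1}(x)$ and $i^n_{n+1}(y)$ are vertices of the minimal simplices carrying $p^\infty_{n+1}(\zeta_x)$ and $p^\infty_{n+1}(\zeta_y)$, they compute $\newdist$, so $\newdist(p^\infty_{n+1}(\zeta_x), p^\infty_{n+1}(\zeta_y)) \leq D+4\delta+2 = M$. Applying $\ddag'_M$ at level $n+1$ --- with its uniform constants $L$ and $n_0$, and with the prescribed endpoints $i^n_{n+1}(x)$ and $i^n_{n+1}(y)$ --- yields a path $\beta_{[x,y]}\from I \to \boundary G$ from $\zeta_x$ to $\zeta_y$ such that $p^\infty_{n+1}\composed\beta_{[x,y]}$ admits a simplicial approximation from $i^n_{n+1}(x)$ to $i^n_{n+1}(y)$ after at most $L$ barycentric subdivisions of $I$. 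I define $i^n_{n+1}$ on $[x,y]$ to be this simplicial approximation and I retain the path $\beta_{[x,y]}$.

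Finally I would glue and verify. Subdividing every edge of $\Skel_1 K_n$ exactly $L$ times gives $\sd^L \Skel_1 K_n$; each edge-approximation is simplicial on its own (at most $L$-fold) subdivision, hence on the common refinement, and the approximations agree at each shared vertex $x$, where they all take the value $i^n_{n+1}(x)$, so $i^n_{n+1}$ is a well-defined simplicial map on $\sd^L\Skel_1 K_n$ (condition (1)). The paths $\beta_{[x,y]}$ glue to a continuous map $f\from\Skel_1 K_n \to \boundary G$ with $f(x) = \zeta_x$, and by construction $i^n_{n+1}$ is a simplicial approximation of $p^\infty_{n+1}\composed f$, giving condition (3); the constants $L$ and $n_0$ are inherited from $\ddag'$. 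The step I expect to be the main obstacle is the distance bookkeeping that makes the hypothesis of $\ddag'$ hold with the sharp constant $M = D+4\delta+2$: one must choose each $\zeta_x$ so that $i^n_{n+1}(x)$ lies in the minimal simplex carrying $p^\infty_{n+1}(\zeta_x)$ --- otherwise $\newdist$ is only controlled up to several extra multiples of $\delta$, and the cruder bound would force a larger double-dagger constant --- while simultaneously keeping $\dist(x, i^n_{n+1}(x)) \leq 2\delta+1$ so that condition (2) survives.
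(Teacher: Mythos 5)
Your proposal follows the paper's proof essentially verbatim: the vertex map is defined via the near-geodesic rays of Lemma~\ref{lem:near_geod_rays} exactly as in the paper, and each edge is filled in by invoking $\ddag'_{D+4\delta+2}$ at level $n+1$ with the two rays' endpoints at infinity playing the roles of $\zeta_1$ and $\zeta_2$ and with $i^n_{n+1}(x_1)$, $i^n_{n+1}(x_2)$ as the prescribed vertices. The only divergence is your insistence on choosing $\zeta_x$ so that $i^n_{n+1}(x)$ lies in the \emph{minimal} simplex carrying $p^\infty_{n+1}(\zeta_x)$ --- such a $\zeta_x$ need not exist, since the partition-of-unity function attached to $U_{n+1}(i^n_{n+1}(x))$ may vanish identically --- whereas the paper simply takes $\zeta_x = \gamma_x(\infty)$ and observes that $\gamma_x(n+1)$ is a vertex of \emph{some} simplex containing $p^\infty_{n+1}(\gamma_x(\infty))$, which is all that the definition of $\ddag'$ demands of the prescribed endpoints.
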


\begin{proof}
  Let $n_0$ and $L$ be the numbers given by the condition $\ddag'_{D + 4\delta
  + 2}$ and let $n \geq n_0$.

  First we define $i^n_{n+1}$ on the vertex set of $K_n$. For a vertex $x$ of
  $K_n$, let $\gamma_x$ be a geodesic with $\dist(x, \gamma_x(n)) \leq 2\delta$
  and let $i^n_{n+1}(x)$ be $\gamma_x(n+1)$, which is a vertex of a simplex
  containing $p^\infty_{n+1}(\gamma_n(\infty))$. Then $\dist(x, i^n_{n+1}(x))
  \leq 2\delta + 1$, as required by the second condition.

  For an edge $I$ of $K_n$ connecting vertices $x_1$ and $x_2$,
  $\dist(i^n_{n+1}(x_1), i^n_{n+1}(x_2))$ is at most $2(2\delta + 1) + D = D +
  4\delta + 2$. It follows from $\ddag'_{D+4\delta+2}$ that there is a path
  $\alpha$ from $\gamma_{x_1}(\infty)$ to $\gamma_{x_2}(\infty)$ in $\boundary
  G$ so that $p^\infty_{n+1}\composed\alpha$ admits a simplicial approximation
  connecting $i^n_{n+1}(x_1)$ to $i^n_{n+1}(x_2)$ after $L$ subdivisions.
  Define $i^n_{n+1}$ to be this simplicial map on $\sd^L I$.

  This process defines a map $i^n_{n+1}\from \Skel_1 K_n\to \Skel_1 K_{n+1}$
  with the required properties.
\end{proof}

\subsection{Extending maps from the boundary of a 2-simplex}

We now introduce a new condition, which should be thought of as a version of
$\ddag_M$ describing connectedness one dimension higher.

\begin{definition}
  Let $\Delta$ be the standard 2-simplex. 

  We say that $G$ satisfies $\S_M$ if there exist $L$ and $n_0$ such that for
  any $n\geq n_0$ and any simplicial map $f\from \sd^M\boundary\Delta \to K_n$
  that is a simplicial approximation to a map that factors through $p^\infty_n
  \from \boundary G \to K_n$, $f$ admits an extension to $\Delta$ that is
  affine on $\sd^L \Delta$. (We say that a map between simplicial complexes is
  affine if its restriction to any simplex of the domain is affine.)
\end{definition}

We now give a sufficient condition for $\S_M$ to be satisfied. This result is
analogous to Proposition~\ref{prop:BM2}.

\begin{proposition}\label{prop:sufficient_condition}
  Suppose that $\boundary G - \set\xi$ is simply connected for any point $\xi
  \in \boundary G$. Then $\S_M$ is satisfied for all $M$.
\end{proposition}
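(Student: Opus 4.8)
The plan is to argue by contradiction, following the template of Proposition~\ref{prop:sufficient_for_ddag_p}, since $\S_M$ is the two-dimensional analogue of $\ddag'_M$ and the overall structure should mirror that proof closely. Suppose $\S_M$ fails. Then for every $i$ there is a number $n_i \geq i$ and a simplicial map $f_i \from \sd^M\boundary\Delta \to K_{n_i}$ that is a simplicial approximation to a map factoring through $p^\infty_{n_i}$, yet $f_i$ admits no affine extension to $\Delta$ after at most $i$ subdivisions. The factoring-through hypothesis means there is a continuous map $\phi_i \from \boundary\Delta \to \boundary G$ with $p^\infty_{n_i}\composed\phi_i$ approximated by $f_i$; the vertices of $\sd^M\boundary\Delta$ map under $f_i$ to vertices of simplices containing the corresponding points $p^\infty_{n_i}(\phi_i(\mathrm{vertex}))$.

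Next I would translate everything into the horoball. Pick group elements $g_i$ so that the image of one distinguished vertex of $\sd^M\boundary\Delta$ under $g_i\composed f_i$ lands on a fixed vertex of $K$, and pass to a subsequence as in Section~\ref{sec:horoball} so that $g_i \to \xi \in \boundary G$ and the horoball $H$ is defined. Because $\sd^M\boundary\Delta$ has boundedly many vertices and $f_i$ has image of bounded diameter (the boundary of a single $2$-simplex), after passing to a deeper subsequence I may assume that $g_i\composed f_i$ is a single fixed simplicial map $\bar f \from \sd^M\boundary\Delta \to H_i \subset H$ for all $i$, landing in $H_i$. The points $g_i\cdot\phi_i(\mathrm{vertex})$ map under $q_i$ into the simplices of $H$ containing the images of $\bar f$, so by Lemma~\ref{lem:propermaps} I can pass to a further subsequence so that, for each vertex $w$ of $\boundary\Delta$, the points $g_i\cdot\phi_i(w)$ converge to a limit in $\boundary G - \set\xi$. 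These limit vertices, connected by the limits of the edge-arcs $g_i\cdot\phi_i\restricted{\mathrm{edge}}$, assemble into a continuous map $\Phi \from \boundary\Delta \to \boundary G - \set\xi$.

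Now I invoke the hypothesis: $\boundary G - \set\xi$ is simply connected, so the loop $\Phi$ bounds a disc, giving a continuous extension $\hat\Phi \from \Delta \to \boundary G - \set\xi$. Let $C \subset \boundary G - \set\xi$ be a compact set containing the image of $\hat\Phi$ together with neighbourhoods of the limit vertices. By Lemma~\ref{lem:uniform_continuity} there are $\epsilon > 0$ and $i_0$ so that, for $i \geq i_0$, $q_i$ has $\newdist$-diameter at most $6\delta+2$ on any $\epsilon$-ball in $C$, and by Lemma~\ref{lem:affinehomotopic} the maps $q_i\restricted{C}$ are all homotopic for large $i$. As in Proposition~\ref{prop:sufficient_for_ddag_p}, I connect $g_i\cdot\phi_i(w)$ to its limit by a short path inside an $\epsilon$-ball (using local path connectedness of $\boundary G$), so that composing $\hat\Phi$ (capped off with these short correction paths along $\boundary\Delta$) with $q_i$ yields a map $\Delta \to H$ extending $q_i \composed (g_i\composed\phi_i)$ on $\boundary\Delta$. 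Since $\hat\Phi(\Delta)$ is compact and $q_i$ has uniformly bounded local diameter, this composite admits an affine simplicial approximation after boundedly many subdivisions. Translating back by $g_i^{-1}$ produces, for all large $i$, an affine extension of $f_i$ to $\Delta$ after a bounded number of subdivisions independent of $i$, contradicting the choice of $f_i$ once $i$ exceeds that bound.

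The main obstacle I expect is the compatibility on $\boundary\Delta$: I must ensure that the extension $\hat\Phi$ composed with $q_i$ genuinely agrees (up to controlled simplicial approximation) with the \emph{given} map $f_i$ on the boundary, not merely with some map having the same boundary vertices. This requires carefully matching $q_i \composed (g_i\composed\phi_i)\restricted{\boundary\Delta}$ against $\bar f = g_i\composed f_i$ on $\boundary\Delta$; since $f_i$ is only a simplicial approximation to $p^\infty_{n_i}\composed\phi_i$, the agreement is up to the simplicial-approximation slack, and Lemma~\ref{lem:describing_q_i} must be used to verify that the boundary edge-paths of $\hat\Phi\composed q_i$ and of $f_i$ are simplicially close enough that the subdivision count can absorb the discrepancy. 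Keeping the subdivision bound $L$ uniform in $i$ — so that the final contradiction triggers once $i > L$ — is the quantitative heart of the argument.
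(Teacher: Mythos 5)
Your overall frame (argue by contradiction, stabilise $g_i\composed f_i$ to a fixed simplicial map into the horoball $H$, and use simple connectedness of $\boundary G - \set\xi$ to contract a loop) matches the paper, but there is a genuine gap at the central step: the construction of the limit loop $\Phi$. Passing to a subsequence so that the finitely many vertex images $g_i\cdot\phi_i(w)$ converge is fine, but the ``limits of the edge-arcs $g_i\cdot\phi_i\restricted{\mathrm{edge}}$'' need not exist: these are arbitrary continuous paths in $\boundary G$ with no common modulus of continuity, so no subsequence need converge to a continuous path (and Hausdorff limits of their images need not even be arcs). This is exactly the equicontinuity obstruction the paper flags in Section~\ref{sec:conclusion} as the barrier to the higher-dimensional generalisation; your argument runs into it one dimension too early. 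The later steps, in particular matching $q_i\composed\hat\Phi$ against $f_i$ on $\boundary\Delta$, inherit the problem, and the boundary-compatibility worry you raise yourself is a symptom of having taken a limit you did not need.

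The paper's proof avoids limits of loops entirely. After stabilising $f = g_i\composed f_i$, it notes that $f$ is homotopic in $H$ to $q_i\composed(g_i\cdot\gamma_i)$ (a simplicial approximation is homotopic to the map it approximates), uses Lemma~\ref{lem:propermaps} to place the images of all the translated loops $g_i\cdot\gamma_i$ in a single compact set $C_0\subset\boundary G - \set\xi$, contracts the \emph{single} loop $g_{i_1}\cdot\gamma_{i_1}$ there using the hypothesis, and pushes that contraction into $H$ via $q_{i_2}$, after comparing $q_{i_1}$ with $q_{i_2}$ on $C_0$ by Lemma~\ref{lem:affinehomotopic}. The uniform subdivision bound that you identify as the quantitative heart then comes for free: since $f$ is one fixed map, null-homotopic in $H$, a single application of Zeeman's relative simplicial approximation theorem (Lemma~\ref{lem:relative_approximation}) produces an affine extension on $\sd^L\Delta$ for one fixed $L$, contradicting the choice of $f_i$ once $i > L$. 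If you replace your limit loop by a contraction of one of the actual loops and finish with Lemma~\ref{lem:relative_approximation}, your argument becomes the paper's.
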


We will use the following lemma:

\begin{lemma}\label{lem:relative_approximation}
  Let $X$ be a simplicial complex and let $f\from\sd^M\boundary\Delta\to X$ be
  a simplicial map. Suppose that $f$ admits a continuous extension to $\Delta$.
  Then there exists $L$ such that $f$ admits an extension to $\Delta$ that is
  affine on $\sd^L\Delta$. 
\end{lemma}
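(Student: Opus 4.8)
The plan is to prove this as a relative form of the simplicial approximation theorem, the continuous extension playing the role of the hypothesis that lets us fill the disc. First I would reduce to the case that $X$ is finite: the image of $\boundary\Delta$ under $f$ and of $\Delta$ under any chosen continuous extension $F\from\Delta\to X$ is compact, hence meets only finitely many open simplices, so it lies in a finite subcomplex and we may replace $X$ by it. Fix such an extension $F$, and note that $F\restricted{\boundary\Delta}=f$ is already affine on $\sd^M\boundary\Delta$; since $\sd^L\boundary\Delta$ refines $\sd^M\boundary\Delta$ for $L\geq M$, the restriction of any extension to the boundary will automatically be affine on $\sd^L\boundary\Delta$, so the real content is to fill the interior affinely.

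Next I would produce a simplicial approximation in the interior and compare it with $f$ on the boundary. For $L$ large the open stars of the vertices of $\sd^L\Delta$ refine the cover of $\Delta$ by the sets $F^{-1}(\mathrm{st}(u))$, $u$ a vertex of $X$, so there is a simplicial map $\phi\from\sd^L\Delta\to X$ with $F(\mathrm{st}(v))\subseteq\mathrm{st}(\phi(v))$ for every vertex $v$. The key consequence is that $\phi(x)$ lies in the carrier (minimal simplex) of $F(x)$ for all $x$; in particular, on $\boundary\Delta$ the points $f(x)=F(x)$ and $\phi(x)$ share the carrier simplex of $f(x)$, so the straight-line homotopy $(x,t)\mapsto(1-t)f(x)+t\,\phi(x)$ stays inside that simplex of $X$ and is therefore a well-defined map into $X$ that is affine on each simplex on which both $f$ and $\phi$ are affine.

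I would then assemble the extension directly on $\sd^L\Delta$ so that affineness on the standard subdivision holds by construction. On boundary vertices set $\bar f=f$ (so the affine extension over $\sd^L\boundary\Delta$ recovers $f$, since an affine map on a simplex is determined by its vertex values); on interior vertices set $\bar f(v)=\phi(v)$. For an interior simplex $\sigma$ the star condition gives $F(\mathrm{int}\,\sigma)\subseteq\bigcap_{v\in\sigma}\mathrm{st}(\phi(v))\neq\emptyset$, so the vertex images span a single simplex of $X$ and the affine extension over $\sigma$ is defined. The delicate simplices are those meeting $\boundary\Delta$, where some vertices carry the genuine points $f(w)$ and the others carry vertices $\phi(v)$ of $X$; one must check that these all lie in a common simplex. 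To handle them I would interpose a thin collar of $\boundary\Delta$, realised as a barycentric refinement near the boundary, across which the straight-line homotopy of the previous paragraph interpolates affinely from $f$ to $\phi$ before $\phi$ takes over in the interior.

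The hard part will be exactly this reconciliation on the collar together with the requirement that the end product be affine on an \emph{iterated barycentric} subdivision $\sd^L\Delta$: a map that is merely affine on some auxiliary subdivision of $\Delta$ need not be affine on any $\sd^L\Delta$ (no $\sd^L\Delta$ need refine an arbitrarily placed subdivision), so I cannot simply take an off-the-shelf relative simplicial extension and subdivide it. The careful point is to keep every triangulation used a barycentric refinement restricting to $\sd^M\boundary\Delta$ on $\boundary\Delta$, and to verify for each simplex of the final $\sd^L\Delta$ that the images of its vertices—the boundary values $f(w)$ and the interior values $\phi(v)$—lie in one simplex of $X$. This compatibility is precisely what the star condition on $\phi$ supplies once $L$ is large, and controlling it uniformly along the boundary collar is the crux of the argument.
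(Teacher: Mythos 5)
You are, in effect, trying to prove the relative simplicial approximation theorem from scratch. The paper does not do this: its proof is a one-line appeal to the main theorem of Zeeman's \emph{Relative simplicial approximation} (the reference \cite{zeeman64}), which supplies a simplicial extension of $f$ over a subdivision of $\Delta$ taken \emph{relative to} $\sd^M\boundary\Delta$, together with the observation that $\sd^L\Delta$ refines any degree-$L$ relative subdivision, so that the resulting simplicial extension is automatically affine on $\sd^L\Delta$. The difficulty you single out at the end of your proposal --- reconciling the prescribed boundary values with an interior simplicial approximation while remaining affine on an honest iterated barycentric subdivision --- is precisely the content of Zeeman's theorem, so re-deriving it is legitimate but leaves you carrying the entire burden of proof.

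And that burden is not discharged: by your own account the crux is stated rather than solved, and the one concrete device you offer for it does not work as written. A product collar $\boundary\Delta\times[0,1]$ carrying the straight-line homotopy from $f$ to $\phi$ has a cell structure that is not compatible with $\sd^L\Delta$; a map built from it is affine on the product cells of the collar, but there is no reason for it to be affine on the simplices of any iterated barycentric subdivision --- the very objection you raise against using an arbitrary auxiliary triangulation. The direct vertex assignment you also sketch (boundary vertices of $\sd^L\Delta$ receive the genuine, generally non-vertex, values $f(w)$; interior vertices receive $\phi(v)$) is the more promising route, and it is viable only because the lemma demands affineness rather than simpliciality; but the verification that the mixed collection $\set{f(w_1),\dotsc,f(w_j),\phi(v_1),\dotsc,\phi(v_l)}$ lies in a single simplex of $X$ for every simplex of $\sd^L\Delta$ meeting $\boundary\Delta$ requires a uniform open-star/Lebesgue-number argument along the boundary (the size of the neighbourhood of $f(w)$ contained in $\bigintersection_{u}\mathrm{st}(u)$, $u$ ranging over the carrier of $f(w)$, degenerates as $f(w)$ approaches a face of its carrier), and you have not supplied it. As it stands the proposal is a plan for a proof of Zeeman's theorem, not a proof of the lemma; either complete that verification or, as the paper does, cite Zeeman and use the refinement remark to convert ``simplicial on a relative subdivision'' into ``affine on $\sd^L\Delta$''.
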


\begin{proof}
  This follows from the main theorem of~\cite{zeeman64}: note that the degree
  $L$ subdivision of a complex can be obtained from any degree $L$ relative
  subdivision by further subdividing some simplices, so any map that is a
  simplicial map on the relative subdivision is affine on the full subdivision.
\end{proof}

\begin{proof}[Proof of Proposition~\ref{prop:sufficient_condition}]
  Suppose that the condition is not satisfied. Then for each $i$ there exists a
  number $n_i \geq i$, a map $\gamma_i \from \boundary\Delta \to \boundary G$
  and a simplicial approximation $f_i \from \sd^M \boundary\Delta \to K_{n_i}$
  to $p^\infty_{n_i}\composed\gamma_i$ such that $f_i$ does not admit an
  extension to $\Delta$ that is affine on $\sd^i\Delta$.

  Fix a vertex $v$ of $\boundary\Delta$; then there exist group elements $g_i
  \in G$ such that $g_i\cdot f_i(v)$ is a fixed vertex of $K$. Pass to a
  subsequence as in Section~\ref{sec:horoball}, and to a deeper subsequence so
  that $g_i\cdot f_i(w)$ is independent of $i$ for all vertices $w \in \sd^M
  \boundary\Delta$. Then the map $g_i\composed f_i$ is independent of $i$; call
  this map $f$. It remains to show that $f$ is null-homotopic in $H$; then it is
  null-homotopic by an affine map by Lemma~\ref{lem:relative_approximation},
  and this gives a contradiction for $i$ sufficiently large.

  By Lemma~\ref{lem:propermaps} there exists $i_0$ and a 
  compact set $C_0 \subset \boundary G - \set\xi$ such that $C_0$ contains the
  image of $\gamma_i$ for every $i \geq i_0$. By
  Lemma~\ref{lem:affinehomotopic} there exists $i_1 \geq i_0$ such that
  $q_i\restricted{C_0}$ is homotopic to $q_{i_1}\restricted{C_0}$ for every $i
  \geq i_1$.  
  
  Let $h$ be a contracting homotopy of $\gamma_{i_1}$ in $\boundary G -
  \set\xi$ and let $C_1$ be a compact subset of $\boundary G - \set\xi$
  containing $C_0$ and the image of $h$. Let $i_2 \geq i_1$ be so that $C_1
  \subset U_{i_2}$.

  Then $f$ is homotopic to $q_{i_1}\composed\gamma_{i_1}$, which is homotopic
  to $q_{i_2}\composed\gamma_{i_1}$, which is homotopic by $q_{i_2}\composed h$
  to a point.
\end{proof}

\section{Local simple connectedness}\label{sec:local_simple_connectedness}

Our goal in this section is to prove the following theorem.

\begin{restatable}{theorem}{ddagimpliesLCtwo}\label{thm:ddagimpliesLCtwo}
  If $G$ satisfies $\ddag'_M$ and $\S_M$ for all $M$ then $\boundary G$ is
  locally simply connected.
\end{restatable}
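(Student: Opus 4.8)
The plan is to construct, for each sufficiently large $n$, a map $i^n_\infty\from \Skel_2 K_n \to \boundary G$ that serves as an approximate section of $p^\infty_n$, and then to use the diameter control on these maps to verify local simple connectedness directly. The hypotheses $\ddag'_M$ and $\S_M$ are precisely what is needed to build such maps dimension by dimension: $\ddag'_M$ (via Proposition~\ref{prop:i_in_dimension_1}) handles the $1$-skeleton, while $\S_M$ handles filling in the $2$-cells. I expect the argument to proceed in three stages, outlined below.

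First I would assemble the maps in low dimensions. Using Proposition~\ref{prop:i_in_dimension_1} I obtain simplicial maps $i^n_{n+1}\from \Skel_1 K_n \to \Skel_1 K_{n+1}$ (after subdivision) that move vertices a bounded distance and are simplicial approximations to maps factoring through $p^\infty_{n+1}$. The condition $\S_M$ then lets me extend these over $2$-simplices: given a $2$-simplex of $K_n$, its boundary maps under (a subdivision of) $i^n_{n+1}$ to a loop in $K_{n+1}$ that is a simplicial approximation to something factoring through $p^\infty_{n+1}$, and $\S_M$ (applied with an $M$ large enough to accommodate the diameter bound $D + 4\delta + 2$ from the edge maps) provides an affine extension over the $2$-simplex after boundedly many subdivisions. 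Composing the maps $i^n_{n+1}$ over a long range and passing to the limit in $\boundary G$ — using that the vertex displacements are summable in the visual metric because each step corresponds to passing from $S_n$ to $S_{n+1}$, so the Gromov products grow — yields a map $i^n_\infty\from \Skel_2 K_n \to \boundary G$. The key quantitative point is that the diameter (in the visual metric $\rho$) of the image under $i^n_\infty$ of any simplex of $K_n$ is bounded by $O(a^{-n})$, since a simplex of $K_n$ has bounded diameter in the word metric and each vertex-tracking geodesic ray agrees with the others out to radius approximately $n$, forcing large Gromov products and hence small visual distance by \cref{lem:boundary_gromov_product} and the definition of $\rho$.

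The second stage is to record that $i^n_\infty \composed r_n \composed p^\infty_n$ converges uniformly to the identity on $\boundary G$, where $r_n$ is any retraction of $K_n$ onto $\Skel_2 K_n$ sending points to nearby points. This follows from the displacement bound in condition~(2) of Proposition~\ref{prop:i_in_dimension_1} together with the estimate of \cref{lem:close_projections}, which controls how far $p^\infty_n(\xi)$ can drift; the upshot is that $i^n_\infty(r_n(p^\infty_n(\xi)))$ lies within visual distance $O(a^{-n})$ of $\xi$, uniformly in $\xi$.

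The third stage is the payoff: to verify local simple connectedness at a point $\xi \in \boundary G$, I take a loop $\ell$ in a small neighbourhood of $\xi$ and push it into $K_n$ via $p^\infty_n$ for $n$ large. By Proposition~\ref{prop:projectstoasimplex}, a sufficiently small loop has $p^\infty_n$-image lying in a single simplex, hence bounds a disc in $\Skel_2 K_n$; applying $i^n_\infty$ to this disc gives a null-homotopy of $i^n_\infty \composed r_n \composed p^\infty_n \composed \ell$ whose image has controlled (small) diameter by the simplex-diameter bound, and the uniform-convergence estimate from the second stage shows this is homotopic to $\ell$ itself within a small neighbourhood of $\xi$. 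The main obstacle, and where I expect to spend the most care, is the first stage: proving that the limit maps $i^n_\infty$ are genuinely well defined and continuous, and that the diameter of the image of a simplex is controlled uniformly in $n$. The difficulty is bookkeeping the accumulated subdivisions and the bounded vertex displacements across infinitely many steps $i^n_{n+1}, i^{n+1}_{n+2}, \dots$, and showing that the resulting telescoping composition converges (rather than drifting) — this requires that the per-step geometric displacement shrinks geometrically in the visual metric, which is ultimately where hyperbolicity does the essential work.
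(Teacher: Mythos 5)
Your first two stages track the paper's argument closely: Proposition~\ref{prop:i_in_dimension_2} builds $i^n_{n+1}$ on $\Skel_2 K_n$ exactly as you describe ($\ddag'$ for edges, $\S_M$ for $2$-cells), Lemmas~\ref{lem:useful} and~\ref{lem:linearproduct} supply the quasi-geodesic argument showing the composites $i^m_n$ converge, Corollary~\ref{cor:visualdiameter} gives the $O(a^{-m})$ simplex-diameter bound, and Lemma~\ref{lem:convergestoidentity} is your approximate-section statement. The difficulty you flag at the end of stage 1 (convergence of the telescoping composition) is real but is handled by the paper just as you suggest.

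The genuine gap is in your third stage, at the sentence claiming that ``the uniform-convergence estimate from the second stage shows this is homotopic to $\ell$ itself within a small neighbourhood of $\xi$.'' Two maps into a compact metric space that are uniformly $\epsilon$-close need not be homotopic at all, let alone by a homotopy with small tracks; deducing ``close implies small-homotopic'' would presuppose some local contractibility of $\boundary G$, which is precisely what the theorem is trying to establish, so the step as stated is circular. This is where the bulk of the paper's Section~\ref{sec:local_simple_connectedness} actually lives. Proposition~\ref{prop:basic_homotopies} builds an explicit homotopy between $i^n_\infty\composed f_0$ and $i^n_\infty\composed f_1$ for two nearby simplicial approximations $f_0, f_1$, by running $\ddag'$ and $\S_M$ \emph{again} on the cylinder $\boundary\Delta\times[0,1]$: one fills in the edges $\Skel_0 J\times[0,1]$ using $\ddag'$, fills the resulting $2$-cells using $\S_M$, and pushes the whole filled cylinder into $\boundary G$ by $i^n_\infty$, with the track lengths controlled by Corollary~\ref{cor:visualdiameter}. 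Then Proposition~\ref{prop:uniform_homotopies} connects $\gamma$ itself to $i^n_\infty\composed f$ by an \emph{infinite concatenation} of such cylinder homotopies over $n, n+1, n+2, \dotsc$, with a separate argument (using Lemma~\ref{lem:convergestoidentity} and the geometric decay $\mu a^{-n-m}$ of the track lengths) to prove continuity of the concatenation at $t=0$. Your proposal needs this mechanism, or something equivalent, to close the loop between $\ell$ and its image under $i^n_\infty\composed r_n\composed p^\infty_n$; without it the argument does not go through.
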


Throughout the rest of this section we assume that $G$ satisfies
$\ddag'_M$ and $\S_M$ for all $M$.

\subsection{Constructing maps to $\boundary G$}

We begin the proof of \cref{thm:ddagimpliesLCtwo} by proving the following 
proposition, which extends the maps constructed in
Proposition~\ref{prop:i_in_dimension_1}.

\begin{proposition}\label{prop:i_in_dimension_2}
  There are constants $n_0$ and $L$ such that for each $n\geq n_0$ there is map
  $i^n_{n+1}\from\Skel_2 K_n \to \Skel_2 K_{n+1}$ with the following properties.
  \begin{enumerate}
    \item For each $n$, $i^n_{n+1}$ is an affine map on the subdivision
      $\sd^L\Skel_2 K_n$.
    \item For any vertex $x$ of $K_n$, $\dist(x, i^n_{n+1}(x)) \leq 2\delta+1$.
  \end{enumerate}
\end{proposition}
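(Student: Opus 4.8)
The plan is to obtain $i^n_{n+1}$ on $\Skel_2 K_n$ by keeping the map produced in Proposition~\ref{prop:i_in_dimension_1} on the $1$-skeleton and filling in each $2$-simplex using the condition $\S_M$. Let $L_1$ and $n_1$ be the constants supplied by Proposition~\ref{prop:i_in_dimension_1} (applied with $\ddag'_{D+4\delta+2}$), and write $i\from\Skel_1 K_n\to\Skel_1 K_{n+1}$ for the map it produces for a given $n\geq n_1$; recall that $i$ is simplicial on $\sd^{L_1}\Skel_1 K_n$, moves vertices a distance at most $2\delta+1$, and is a simplicial approximation to a map $p^\infty_{n+1}\composed\phi$ factoring through $p^\infty_{n+1}$, where $\phi\from\Skel_1 K_n\to\boundary G$ is the map recorded in the proof of that proposition.

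First I would treat a single $2$-simplex. Fix a $2$-simplex $\sigma$ of $K_n$ and an affine identification of $\sigma$ with the standard $2$-simplex $\Delta$, so that $\boundary\sigma$ is identified with $\boundary\Delta$ and the induced subdivision of $\boundary\sigma$ is $\sd^{L_1}\boundary\Delta$. The restriction $i\restricted{\boundary\sigma}$ is then a simplicial map $\sd^{L_1}\boundary\Delta\to K_{n+1}$, and, being the restriction of $i$, it is a simplicial approximation to $p^\infty_{n+1}\composed(\phi\restricted{\boundary\sigma})$, which factors through $p^\infty_{n+1}$. Thus $i\restricted{\boundary\sigma}$ meets the hypotheses of $\S_{L_1}$, and I would invoke $\S_{L_1}$ to extend it to a map $\sigma\to K_{n+1}$ that is affine on $\sd^{L_2}\sigma$, where $L_2$ and $n_2$ are the constants furnished by $\S_{L_1}$ and are independent of $n$ and of $\sigma$.

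Next I would glue. Because every such extension restricts on $\boundary\sigma$ to the fixed map $i\restricted{\boundary\sigma}$, the extensions over the various $2$-simplices of $K_n$ agree on the shared $1$-skeleton, so together with $i$ they define a single continuous map $i^n_{n+1}\from\Skel_2 K_n\to K_{n+1}$. Since an affine map on a subdivided $2$-simplex sends each small simplex into the simplex spanned by the images of its (at most three) vertices, the image lies in $\Skel_2 K_{n+1}$, as required. Setting $L=\max\{L_1,L_2\}$ and $n_0=\max\{n_1,n_2\}$, the map is affine on $\sd^L\Skel_2 K_n$ (affineness on a coarser subdivision persists on the refinement $\sd^L$), giving property~(1); property~(2) is inherited verbatim from Proposition~\ref{prop:i_in_dimension_1}, since $i^n_{n+1}$ agrees with $i$ on vertices.

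The construction is essentially a gluing argument, so the genuine content has already been front-loaded into $\S_M$ and Proposition~\ref{prop:i_in_dimension_1}; the step requiring the most care is checking that $i\restricted{\boundary\sigma}$ really satisfies the hypotheses of $\S_M$, namely that it is a simplicial approximation to a map factoring through $p^\infty_{n+1}$, and keeping the subdivision bookkeeping uniform. In particular I would make sure that $L_2\geq L_1$ (enlarging $L_2$ if necessary), so that $i\restricted{\boundary\sigma}$, although only simplicial on $\sd^{L_1}\boundary\Delta$, is affine on the finer $\sd^{L_2}\boundary\Delta$ on which the extension restricts, and that the constants $L_2, n_2$ obtained from $\S_{L_1}$ are uniform over all $n$ and all $2$-simplices, which is exactly the form of the conclusion of $\S_M$.
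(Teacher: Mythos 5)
Your proposal is correct and follows essentially the same route as the paper: take the map on the $1$-skeleton from Proposition~\ref{prop:i_in_dimension_1}, apply $\S_M$ (with $M$ the subdivision constant from that proposition) to fill in each $2$-simplex, and glue. The extra bookkeeping you include about making the subdivision constants uniform and compatible is exactly the detail the paper's two-sentence proof leaves implicit.
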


\begin{proof} 
  Let $n_0$ be as in Proposition~\ref{prop:i_in_dimension_1} and for $n \geq
  n_0$ let $i^n_{n+1} \from \Skel_1 K_n \to \Skel_1 K_{n+1}$ be the maps given
  by that proposition, so there exists $M$ such that $i^n_{n+1}$ is a
  simplicial map on $\sd^L\Skel_1 K_n$, and this map is a simplicial
  approximation to a map that factors through $p^\infty_{n+1} \from \boundary G
  \to K_{n+1}$.

  Then $\S_L$ gives numbers $L'$ and $n_0' \geq n_0$ such that for $n \geq
  n_0'$, for any 2-simplex $\sigma$ of $K_n$, $i^n_{n+1}\restricted{\boundary
  \sigma}$ admits an extension to $\sigma$ that is affine on $\sd^{L'}\sigma$.
  Gluing these extensions together extends $i^n_{n+1}$ to a map $\Skel_2 K_n \to
  \Skel_2 K_{n+1}$ with the required properties.
\end{proof}

\begin{definition}
  With the notation of Proposition~\ref{prop:i_in_dimension_2}, for $n \geq m
  \geq n_0$ let $i^m_n \from \Skel_2 K_m \to \Skel_2 K_n$ be the composition
  $i^{n-1}_n \composed \dotsb \composed i^m_{m+1} \from \Skel_2 K_m \to 
  \Skel_2 K_n$.  This map is affine on the degree $(n-m)L$ barycentric
  subdivision of $\Skel_2 K_m$.
\end{definition}

In what follows we require the following useful lemma.

\begin{lemma}\label{lem:useful}
  Let $(x_n)$ be a sequence of points in $G$ with $\dist(x_{n+1}, e) = 
  \dist(x_n, e) + 1$ and $\dist(x_{n+1}, x_n)$ bounded. Then for $n \geq m$,
  \begin{equation*}
    \gromov{e}{x_m}{x_n} \geq \dist(e,x_m) - C
  \end{equation*}
  for some constant $C$ depending only on $\delta$ and the bound on 
  $\dist(x_{n+1}, x_n)$.
\end{lemma}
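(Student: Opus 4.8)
The plan is to realise the sequence $(x_k)$ as a quasigeodesic emanating from $e$, apply the stability of quasigeodesics (the Morse lemma) to place $x_m$ within a bounded distance of a genuine geodesic $[e, x_n]$, and then read off the Gromov product bound by a one-line computation.

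First I would fix a bound $B$ with $\dist(x_{k+1}, x_k) \leq B$ for all $k$; note $B \geq 1$ automatically, since $\dist(x_{k+1},x_k) \geq \abs{\dist(e,x_{k+1}) - \dist(e,x_k)} = 1$. For fixed $m \leq n$ I would build a path $\Gamma$ from $e$ to $x_n$ by first traversing a geodesic $[e, x_m]$ and then concatenating geodesic segments $[x_k, x_{k+1}]$ for $m \leq k < n$, parametrised by arc length. Because each vertex increases the distance to $e$ by exactly $1$, the ``height'' $h = \dist(e, \cdot)$ satisfies $h(x_k) = \dist(e,x_m) + (k-m)$, so along $\Gamma$ the height is non-decreasing up to an error of at most $B$ incurred within any single geodesic segment.

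The technical heart is to check that $\Gamma$ is a $(B, c)$-quasigeodesic with $c$ depending only on $B$. The upper Lipschitz bound is automatic, since $\Gamma$ has unit speed. For the lower (properness) bound I would use the reverse triangle inequality $\dist(\Gamma(s), \Gamma(t)) \geq \abs{h(\Gamma(s)) - h(\Gamma(t))}$ together with the near-monotonicity of $h$: if $\Gamma(s)$ and $\Gamma(t)$ lie in the $j$-th and $k$-th segments with $j \leq k$, then $h(\Gamma(t)) - h(\Gamma(s)) \geq (k-j) - 2B$, while the arc-length separation satisfies $t - s \leq (k - j + 1)B$; combining these yields $\dist(\Gamma(s), \Gamma(t)) \geq \tfrac{1}{B}(t-s) - c$ for a constant $c = c(B)$. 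This is the one place where the hypothesis $\dist(e, x_{k+1}) = \dist(e, x_k) + 1$ is essential: it forces the height to increase at a definite average rate and prevents $\Gamma$ from doubling back toward $e$.

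With $\Gamma$ a quasigeodesic, the Morse lemma \cite[III.H.1.7]{bridsonhaefliger99} provides a constant $R = R(B, \delta)$ and a point $p$ on a geodesic $[e, x_n]$ with $\dist(x_m, p) \leq R$. Since $p$ lies on $[e, x_n]$ we have $\dist(p, x_n) = \dist(e, x_n) - \dist(e, p)$ and $\dist(e,p) \geq \dist(e, x_m) - R$, whence $\dist(x_m, x_n) \leq \dist(e, x_n) - \dist(e, x_m) + 2R$. Substituting into $\gromov{e}{x_m}{x_n} = \tfrac{1}{2}\bigl(\dist(e,x_m) + \dist(e,x_n) - \dist(x_m, x_n)\bigr)$ gives $\gromov{e}{x_m}{x_n} \geq \dist(e, x_m) - R$, so the lemma holds with $C = R$, which depends only on $\delta$ and $B$. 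I expect the quasigeodesic estimate, and in particular the properness bound controlled by the height function, to be the main obstacle; the naive alternative of chaining the hyperbolic inequality $\gromov{e}{x_m}{x_n} \geq \min\{\gromov{e}{x_m}{x_k}, \gromov{e}{x_k}{x_n}\} - \delta$ fails, since it accumulates an error proportional to $n - m$.
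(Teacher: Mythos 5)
Your proposal is correct and follows essentially the same route as the paper: both arguments recognise the sequence as a quasi-geodesic (with the lower quasi-geodesic bound coming from the reverse triangle inequality applied to $\dist(e,\cdot)$, which is exactly your height-function observation), invoke the stability of quasi-geodesics from~\cite[III.H.1.7]{bridsonhaefliger99}, and finish with a short Gromov-product computation. The only differences are cosmetic: you interpolate the sequence by a continuous unit-speed path and project $x_m$ onto $[e,x_n]$, whereas the paper works with the discrete sequence directly and projects onto a limiting geodesic ray.
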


\begin{proof}
  By the triangle inequality, for any $n$ and $m$ we have $\dist(x_n, x_m) \geq
  \abs{n - m}$.  On the other hand, let $B$ be the bound on $\dist(x_{n+1},
  x_n)$. Then for any $n$ and $m$ we have $\dist(x_n, x_m) \leq B\abs{n - m}$. 
  Finally, $\dist(e, x_n) = \dist(e, x_0) + n$.
  
  It follows that the concatenation of a geodesic $[e, x_0]$ with the sequence
  $(x_n)_{n = 0}^\infty$ is a quasi-geodesic with constants depending only on
  $B$. Therefore by~\cite[Theorem III.H.1.7]{bridsonhaefliger99} there is a
  number $R$ depending only on $B$ and $\delta$ such that the sequence $(x_n)$
  is contained in an $R$-neighbourhood of a geodesic ray starting at $e$; let
  $(y_n)$ be a sequence of points on this geodesic with $\dist(y_n, x_n) \leq
  R$. Then $\gromov{e}{y_m}{y_n} = \min\{\dist(e, y_m), \dist(e, y_n)\}$, and
  the result follows with $C = 5R/2$.
\end{proof}

\begin{lemma}\label{lem:linearproduct}
  There is a constant $C$ such that if $x \in \Skel_2 K_m$ then
  \begin{equation*}
    \gromov{e}{i^m_n(x)}{x} \geq m - C
  \end{equation*}
  for all $n\geq m\geq n_0$.
\end{lemma}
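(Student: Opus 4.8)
The plan is to reduce the statement to Lemma~\ref{lem:useful} by producing, for a given point $x \in \Skel_2 K_m$, an infinite sequence of vertices $(v_k)_{k \ge m}$ of $G$ that ``tracks'' the images $x_k := i^m_k(x)$ and whose consecutive terms stay a bounded distance apart. Concretely, for each $k \ge m$ let $\sigma_k$ be the minimal simplex of $K_k$ containing $x_k$ (so $\sigma_m$ is the minimal simplex containing $x$), and let $v_k$ be any vertex of $\sigma_k$. Since every vertex of $K_k$ lies in the sphere $S_k$, we have $\dist(e, v_k) = k$, so consecutive values of $\dist(e,v_k)$ differ by exactly $1$; it then remains only to bound $\dist(v_{k+1}, v_k)$ uniformly and to invoke Lemma~\ref{lem:useful}. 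Note that when $x$ is itself a vertex this is just the sequence $v_k = i^m_k(x)$, which moves by at most $2\delta+1$ at each step; the argument below treats all points of the $2$-skeleton uniformly.

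The key technical input is a single-step bound: there is a constant $B_1$, depending only on $\delta$ and $D$, such that for every $k \ge n_0$ and every simplex $\sigma$ of $K_k$ the vertices of $i^k_{k+1}(\sigma)$ have pairwise $\dist$ at most $B_1$. First I would establish this. By Proposition~\ref{prop:i_in_dimension_2} the map $i^k_{k+1}$ is affine on the fixed subdivision $\sd^L \Skel_2 K_k$, with $L$ independent of $k$. Hence $\sigma$ is subdivided into a number of cells bounded in terms of $L$ alone, each mapped affinely into a single simplex of $K_{k+1}$; adjacent cells share a face, so their images share a vertex. The image is therefore a connected union of boundedly many simplices of $K_{k+1}$, and since each simplex has diameter at most $D$ in $\dist$, all of its vertices are pairwise within a constant $B_1$ depending only on $L$ and $D$.

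Granting this, I would deduce the tracking bound. The point $x_{k+1} = i^k_{k+1}(x_k)$ lies in $i^k_{k+1}(\sigma_k)$, so its minimal simplex $\sigma_{k+1}$ is a face of one of the image simplices; in particular every vertex of $\sigma_{k+1}$ is a vertex of $i^k_{k+1}(\sigma_k)$. Since $v_k$ is a vertex of $\sigma_k$, the vertex $i^k_{k+1}(v_k)$ is also a vertex of $i^k_{k+1}(\sigma_k)$ and satisfies $\dist(v_k, i^k_{k+1}(v_k)) \le 2\delta+1$ by the second property in Proposition~\ref{prop:i_in_dimension_2}. The single-step bound then gives $\dist(v_{k+1}, i^k_{k+1}(v_k)) \le B_1$, whence $\dist(v_{k+1}, v_k) \le B_1 + 2\delta + 1 =: B$. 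Applying Lemma~\ref{lem:useful} to the sequence $(v_k)_{k \ge m}$ yields $\gromov{e}{v_m}{v_n} \ge \dist(e, v_m) - C_0 = m - C_0$ for a constant $C_0$ depending only on $\delta$ and $B$. Finally, $v_m$ and $v_n$ are admissible choices of the representative vertices used to define $\gromov{e}{i^m_n(x)}{x}$, and any two such choices lie within $D$ of one another; a direct estimate with the formula $\gromov{e}{a}{b} = \tfrac12(\dist(e,a)+\dist(e,b)-\dist(a,b))$, together with \cref{rem:dist_vs_newdist}, shows the Gromov product changes by at most $2D$ under a change of representatives. Thus $\gromov{e}{i^m_n(x)}{x} \ge m - C$ with $C = C_0 + 2D$.

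The main obstacle is the single-step diameter bound: because $i^k_{k+1}$ is only affine on a subdivision rather than simplicial on $K_k$ itself, one must rule out the image of a simplex being spread across the complex. What makes this work — and what must be used carefully — is that the subdivision degree $L$ is a single constant independent of $k$, $m$ and $n$, so the per-step bound $B$ does not deteriorate as $n-m$ grows. It is then Lemma~\ref{lem:useful}, rather than any direct control on the diameter of $i^m_n(\sigma_m)$, that converts the uniform per-step bound into the uniform final constant $C$.
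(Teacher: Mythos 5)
Your proposal is correct and is essentially the paper's own argument: the paper likewise takes $y_n$ a vertex of the minimal simplex of $K_n$ containing $i^m_n(x)$, bounds $\dist(y_n,y_{n+1})$ by $(2\delta+1)+RD$ using the uniform subdivision degree $L$ and the diameter bound $D$ on simplices, applies Lemma~\ref{lem:useful}, and corrects for the choice of representative vertices via Remark~\ref{rem:dist_vs_newdist}. The only differences are cosmetic (the paper phrases the single-step bound as a path of at most $R=3\cdot2^{L-1}$ edges in $\Skel_1\sd^L\Delta$, and its final additive constant is $3D$ rather than your $2D$).
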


\begin{proof}
  For $n \geq m$, let $y_n$ be a vertex of the minimal simplex of $K_n$
  containing $i^m_n(x)$. Then, for each $n \geq m$, we have $\dist(e, y_{n+1})
  = \dist(e, y_n)$.  We now bound the distance $\dist(y_n, y_{n+1})$. To start
  with, $\dist(y_n, i^n_{n+1} (y_n)) \leq 2\delta+1$. Let $R$ be $3\cdot
  2^{k-1}$, so that any two vertices in the 1-skeleton of $\sd^k \Delta$ are
  joined by a path traversing at most $R$ edges. Let $\sigma$ be the minimal
  simplex of $K_n$ containing $i^m_n(x)$. Then $y_n$ is a vertex of $\sigma$
  and $y_{n+1}$ is a vertex of the image of $\sigma$ under $i^n_{n+1}$, so
  $\dist(i^n_{n+1} y_n, y_{n+1}) \leq RD$.  Putting this together,
  \begin{equation*}
    \dist(y_n, y_{n+1}) \leq (2\delta + 1) + RD
  \end{equation*}

  It follows from \cref{lem:useful} that there exists a constant $C$ depending
  only on $\delta$ and $k$ such that $\gromov{e}{y_n}{y_m} \geq m -
  C$.  By applying the inequality in \cref{rem:dist_vs_newdist}, we deduce that
  \begin{equation*}
      \gromov{e}{i^m_n(x)}{x} \geq m - C - 3D.\qedhere
  \end{equation*}
\end{proof}

\begin{corollary}
  For $m \geq n_0$ and $x \in \Skel_2 K_m$, the sequence
  $(i^m_n(x))_{n\geq m}$ converges to a point in $\boundary G$, which we shall
  denote $i^m_\infty(x)$.
\end{corollary}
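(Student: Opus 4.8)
The statement follows by combining \cref{lem:linearproduct} (or rather the estimates in its proof) with the standard sequential characterization of the Gromov boundary. The plan is to show that the Gromov products between distinct terms of the sequence tend to infinity, which is precisely the Cauchy condition guaranteeing convergence to a boundary point in a proper hyperbolic space.

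First I would reduce to a sequence of genuine group elements. For each $n \geq m$, let $y_n$ be a vertex of the minimal simplex of $K_n$ containing $i^m_n(x)$, exactly as in the proof of \cref{lem:linearproduct}. Since $y_n$ is a vertex of the minimal simplex containing $i^m_n(x)$, the extended Gromov product of $i^m_n(x)$ agrees with that of $y_n$ up to the bounded error controlled by \cref{rem:dist_vs_newdist}; hence it suffices to prove that $(y_n)_{n \geq m}$ converges to a point of $\boundary G$ and that the limit is unchanged when we pass back to $(i^m_n(x))$.

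Next I would verify that $(y_n)_{n\geq m}$ satisfies the hypotheses of \cref{lem:useful}. This is already done inside the proof of \cref{lem:linearproduct}: one has $\dist(e, y_{n+1}) = \dist(e, y_n) + 1$, and the consecutive distances $\dist(y_n, y_{n+1})$ are uniformly bounded by $(2\delta+1) + RD$, using that $\dist(y_n, i^n_{n+1}(y_n)) \leq 2\delta + 1$ together with the bound $RD$ coming from affineness of $i^n_{n+1}$ on a subdivision. Applying \cref{lem:useful} to this sequence then yields a constant $C$ such that, for all $n \geq n' \geq m$,
\begin{equation*}
  \gromov{e}{y_{n'}}{y_n} \geq \dist(e, y_{n'}) - C = n' - C.
\end{equation*}
In particular $\gromov{e}{y_{n'}}{y_n} \to \infty$ as $n', n \to \infty$.

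Finally I would invoke the standard theory of the Gromov boundary of a proper geodesic hyperbolic space~\cite{bridsonhaefliger99}: a sequence whose pairwise Gromov products based at $e$ tend to infinity converges to a unique point $\xi \in \boundary G$, in the sense that $\gromov{e}{y_n}{\xi} \to \infty$. By the definition of the extended Gromov product and \cref{rem:dist_vs_newdist}, $\gromov{e}{i^m_n(x)}{\xi}$ differs from $\gromov{e}{y_n}{\xi}$ by a bounded amount, so it too tends to infinity, meaning $(i^m_n(x))_{n\geq m}$ converges to $\xi$ in the topology on $K \union \boundary G$. There is no serious obstacle here; the only care needed is the routine bookkeeping in transferring estimates between $\dist$ on $G$ and $\newdist$ on $K$, and in matching the abstract Cauchy criterion with the neighbourhood-basis definition of the boundary topology given earlier.
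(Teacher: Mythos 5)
Your proposal is correct and follows essentially the same route as the paper: the paper's proof also derives $\gromov{e}{i^m_{n_1}(x)}{i^m_{n_2}(x)} \geq \min\{n_1,n_2\} - C$ (citing \cref{lem:linearproduct}, though the needed pairwise estimate really comes from applying \cref{lem:useful} to the vertex sequence $(y_n)$ inside that lemma's proof, exactly as you spell out) and then concludes by the standard convergence criterion in~\cite{bridsonhaefliger99}. Your version is just a more explicit unpacking of the same argument.
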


\begin{proof}
  For $n_1$ and $n_2$ at least $m$, \cref{lem:linearproduct} implies that
  $\gromov{e}{i^m_{n_1}(x)}{i^m_{n_2}(x)} \geq \min\{n_1,n_2\} - C$, which
  tends to infinity as $n_1$ and $n_2$ tend to infinity. Therefore
  $(i^m_n(x))_{n=m}^\infty$ converges to a point in $\boundary G$,
  by~\cite[Lemma III.H.3.13]{bridsonhaefliger99}.
\end{proof}

\begin{lemma}\label{lem:simplexproduct}
  There is a constant $C$ such that if $m \geq n_0$ and $x$ and $y$ are in a
  common simplex of $\Skel_2 K_m$ then
  \begin{equation*}
    \gromov{e}{i^m_n(x)}{i^m_n(y)} \geq m - C
  \end{equation*}
  for all $n\geq m$.
\end{lemma}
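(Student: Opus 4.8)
The plan is to reduce everything to the already-established \cref{lem:linearproduct} together with the standard hyperbolic four-point inequality for Gromov products, using the single fact that $x$ and $y$, lying in a common simplex, are close together. First I would record that closeness quantitatively. Since $x$ and $y$ lie in a common simplex $\sigma$ of $\Skel_2 K_m$, the minimal simplices containing them are faces of $\sigma$, so any chosen vertices $x^\star$ and $y^\star$ of these minimal simplices are vertices of $\sigma$; hence $\dist(x^\star, y^\star) \leq D$ while $\dist(e, x^\star) = \dist(e, y^\star) = m$ because both lie in $S_m$. By the definition of the extended Gromov product this gives
\begin{equation*}
  \gromov{e}{x}{y} = \tfrac{1}{2}\left(2m - \dist(x^\star, y^\star)\right) \geq m - \tfrac{D}{2}.
\end{equation*}

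Next I would invoke \cref{lem:linearproduct} to obtain a constant $C_0$ with $\gromov{e}{i^m_n(x)}{x} \geq m - C_0$ and $\gromov{e}{i^m_n(y)}{y} \geq m - C_0$ for all $n \geq m$. The point is now that all four relevant points $i^m_n(x)$, $x$, $y$, $i^m_n(y)$ have pairwise Gromov products (through the chain $i^m_n(x) \to x \to y \to i^m_n(y)$) bounded below by $m$ minus a constant. Since the extended Gromov product of points in $K$ is by definition the genuine Gromov product of vertices of their minimal simplices, and these vertices are honest elements of the $\delta$-hyperbolic Cayley graph, the four-point inequality
\begin{equation*}
  \gromov{e}{a}{c} \geq \min\{\gromov{e}{a}{b},\, \gromov{e}{b}{c}\} - \delta
\end{equation*}
applies. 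Applying it first with $(a,b,c) = (i^m_n(x),\, x,\, y)$ yields $\gromov{e}{i^m_n(x)}{y} \geq m - \max(C_0, D/2) - \delta$, and applying it again with $(a,b,c) = (i^m_n(x),\, y,\, i^m_n(y))$ yields the desired bound $\gromov{e}{i^m_n(x)}{i^m_n(y)} \geq m - C$ with $C = \max(C_0, D/2) + 2\delta$.

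The only genuine bookkeeping, and the step I expect to require the most care, is the passage between the extended ``metric'' $\newdist$ and honest distances in $G$: because a given point of $K$ may be assigned different vertices of its minimal simplex, the Gromov products above are only well-defined up to the additive error $2D$ recorded in \cref{rem:dist_vs_newdist}. Each application of the four-point inequality and each identification of a point of $K$ with a vertex therefore introduces a bounded additive slack, but every such error is independent of $n$ and $m$, so all of them can be absorbed into the final constant $C$. No term in the estimate grows with $n$, which is exactly what makes the uniform lower bound $m - C$ (rather than something decaying in $n - m$) possible.
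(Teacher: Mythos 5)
Your proposal is correct and follows essentially the same route as the paper: both arguments combine \cref{lem:linearproduct} applied to $x$ and $y$ with the bound $\gromov{e}{x}{y} \geq m - D/2$ coming from the common simplex, and then chain the four-point inequality through $i^m_n(x), x, y, i^m_n(y)$ (the paper cites the iterated form of this inequality from Bridson--Haefliger, which is exactly your double application). The only difference is that you spell out the $\newdist$-versus-$\dist$ bookkeeping explicitly, which the paper leaves implicit.
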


\begin{proof}
  The inequality of \cref{lem:linearproduct} is satisfied for each of $x$ 
  and $y$. By~\cite[Proposition III.H.1.22]{bridsonhaefliger99} there exist
  $\delta'$ depending only on $\delta$ such that
  \begin{align*}
    \gromov{e}{i^m_n(x)}{i^m_n(y)} &\geq \min\{\gromov{e}{i^m_n(x)}{x}, 
    \gromov{e}{x}{y}, \gromov{e}{y}{i^m_n(y)}\} - 2\delta'\\
    &\geq \min\{m - C, m - D/2, m - C\} - 2\delta' \\
    &\geq m - C'
  \end{align*}
  for some constant $C'$.
\end{proof}

The following corollary immediately follows from \cref{lem:simplexproduct}.

\begin{corollary}\label{cor:visualdiameter}
  There is a constant $C$ such that for any $m\geq n_0$ and any simplex
  $\sigma$ of $\Skel_2 K_m$, the diameter of $i^m_{\infty}(\sigma)$ with
  respect to the visual metric $\vdistance$ is bounded above by $Ca^{-m}$.
  \qed
\end{corollary}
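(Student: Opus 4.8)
The plan is to deduce everything from a single uniform lower bound on the boundary Gromov product of the limit points $i^m_\infty(x)$ and $i^m_\infty(y)$. Since the visual metric obeys $\vdistance(\xi_1,\xi_2) \leq k_2 a^{-\gromov{e}{\xi_1}{\xi_2}}$, it suffices to produce a constant $C$, independent of $m$, with
\begin{equation*}
  \gromov{e}{i^m_\infty(x)}{i^m_\infty(y)} \geq m - C
\end{equation*}
whenever $x$ and $y$ lie in a common simplex $\sigma$ of $\Skel_2 K_m$. Indeed, this gives $\vdistance(i^m_\infty(x),i^m_\infty(y)) \leq k_2 a^{C} a^{-m}$ for every such pair, and taking the supremum over $x,y\in\sigma$ bounds $\Diam i^m_\infty(\sigma)$ by $k_2 a^{C}\cdot a^{-m}$, which is the claim with constant $k_2 a^{C}$.

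First I would record the finite-stage input. By \cref{lem:simplexproduct} there is a constant $C$, uniform in $m$, $x$ and $y$, with $\gromov{e}{i^m_n(x)}{i^m_n(y)} \geq m - C$ for all $n \geq m$; this is a statement about the diagonal. Since $i^m_n(x) \to i^m_\infty(x)$ and $i^m_n(y) \to i^m_\infty(y)$, the remaining task is to transfer the bound to the boundary. Here I would invoke \cite[III.H.3.17]{bridsonhaefliger99} in the form $\gromov{e}{i^m_\infty(x)}{i^m_\infty(y)} \geq \liminf_{n_1,n_2}\gromov{e}{i^m_{n_1}(x)}{i^m_{n_2}(y)}$, the $\liminf$ being taken over all pairs.

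The one place requiring care — and the only genuine, if small, obstacle — is that the cited relation controls the boundary product by a $\liminf$ over \emph{all} pairs $(n_1,n_2)$, whereas \cref{lem:simplexproduct} bounds only the diagonal, and a large diagonal $\liminf$ does not by itself force a large pairwise $\liminf$. I would bridge this exactly as in the proof that $(i^m_n(x))_n$ converges: combining \cref{lem:linearproduct} with the hyperbolic inequality $\gromov{e}{a}{c} \geq \min\{\gromov{e}{a}{b},\gromov{e}{b}{c}\} - \delta$ gives $\gromov{e}{i^m_{n_1}(x)}{i^m_{n_2}(x)} \geq \min\{n_1,n_2\} - C \geq m - C$, and a second application of the same inequality inserting the intermediate point $i^m_{n_2}(x)$ yields
\begin{equation*}
  \gromov{e}{i^m_{n_1}(x)}{i^m_{n_2}(y)} \geq \min\{m - C,\, m - C\} - \delta = m - C - \delta
\end{equation*}
for all $n_1,n_2 \geq m$ (enlarging $C$ once so that both lemmas apply with the same constant). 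Hence the pairwise $\liminf$ is at least $m - C - \delta$, so the displayed boundary bound holds with $C$ replaced by $C + \delta$, and substituting this into the visual-metric inequality completes the argument. No further estimates are needed, which is precisely why the corollary follows immediately from \cref{lem:simplexproduct}.
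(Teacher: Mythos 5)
Your proof is correct and follows the route the paper intends: the paper states the corollary with no written proof, declaring it to "immediately follow" from Lemma~\ref{lem:simplexproduct} via the visual metric bound $\vdistance \leq k_2 a^{-\gromov{e}{\cdot}{\cdot}}$, and your argument simply supplies the standard limit-passing details (the liminf comparison from~\cite[III.H.3.17]{bridsonhaefliger99} and the off-diagonal estimate) that the paper leaves implicit. Your care about the diagonal versus pairwise liminf is a legitimate point, but it is resolved exactly as you do it, so there is no divergence from the paper's approach.
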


\begin{proposition}\label{prop:continuouslimit}
  For $m\geq n_0$ the map $i^m_\infty\from \Skel_2 K_m \to \boundary G$ is
  continuous.
\end{proposition}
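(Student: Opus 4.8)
The plan is to fix $m\geq n_0$ and a point $x\in\Skel_2 K_m$ and to verify continuity of $i^m_\infty$ at $x$ by exploiting the factorisation $i^m_\infty = i^N_\infty\composed i^m_N$, valid for every $N\geq m$. This identity follows from the definition of $i^m_n$ as the composition $i^N_n\composed i^m_N$ for $n\geq N$, together with the definition of $i^N_\infty$ as the pointwise limit $\lim_n i^N_n$: evaluating at a point gives $i^m_n(y)=i^N_n(i^m_N(y))$ for all $n\geq N$, and letting $n\to\infty$ yields $i^m_\infty=i^N_\infty\composed i^m_N$. The virtue of this splitting is that $i^m_N$ is continuous, being affine on the subdivision $\sd^{(N-m)L}\Skel_2 K_m$, while the remaining factor $i^N_\infty$ is controlled uniformly by \cref{cor:visualdiameter}: it maps each simplex of $\Skel_2 K_N$ to a set of $\vdistance$-diameter at most $Ca^{-N}$, a bound I can make as small as I like by increasing $N$.

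Concretely, given $\varepsilon>0$ I would first choose $N\geq m$ so large that $Ca^{-N}<\varepsilon$, where $C$ is the constant of \cref{cor:visualdiameter}. Let $\tau$ be the carrier of $x$ in $\sd^{(N-m)L}\Skel_2 K_m$ and let $V$ be the open star of $\tau$ in this subdivision, which is an open neighbourhood of $x$. For any $x'\in V$ the carrier $\tau'$ of $x'$ has $\tau$ as a face, so both $x$ and $x'$ lie in the closed simplex $\overline{\tau'}$. Since $i^m_N$ is affine on $\tau'$, its image $i^m_N(\overline{\tau'})$ is contained in a single simplex $\sigma'$ of $\Skel_2 K_N$; in particular $i^m_N(x)$ and $i^m_N(x')$ both lie in $\sigma'$.

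It then remains only to recombine the two factors: $i^m_\infty(x)=i^N_\infty(i^m_N(x))$ and $i^m_\infty(x')=i^N_\infty(i^m_N(x'))$ both lie in $i^N_\infty(\sigma')$, which has $\vdistance$-diameter at most $Ca^{-N}<\varepsilon$ by \cref{cor:visualdiameter}. Hence $\vdistance(i^m_\infty(x),i^m_\infty(x'))<\varepsilon$ for every $x'\in V$, establishing continuity at $x$, and hence everywhere. The substantive analytic input, namely the uniform $N$-decaying diameter bound, is already supplied by \cref{cor:visualdiameter}, so I do not anticipate any genuine difficulty; the one point to get right is the simplicial bookkeeping guaranteeing that $x$ and its neighbours share an image simplex under $i^m_N$, together with the observation that the argument is not circular, since \cref{cor:visualdiameter} does not itself presuppose continuity of $i^N_\infty$.
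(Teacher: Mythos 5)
Your proof is correct and follows essentially the same route as the paper's: both rely on $i^m_N$ being affine on $\sd^{(N-m)L}\Skel_2 K_m$, so that a small simplicial neighbourhood of $x$ maps into a single simplex of $K_N$, whose image under $i^N_\infty$ has visual diameter at most $Ca^{-N}$ by Corollary~\ref{cor:visualdiameter}. The only difference is cosmetic: you make the factorisation $i^m_\infty = i^N_\infty\composed i^m_N$ and the open-star bookkeeping explicit, while the paper phrases the same argument via a shrinking neighbourhood basis of subcomplexes.
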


\begin{proof}
  Fix $x \in \Skel_2 K_m$. For $n\geq 0$ let $U_n$ be the minimal subcomplex of
  $\sd^{nk}\Skel_2 K_m$ containing a neighbourhood of $x$. Note that the
  collection $(U_n)_{n \geq 0}$ is a neighbourhood basis for $x$ in $\Skel_2
  K_m$.

  For each $n$, the restriction of $i^m_{m+n}$ to a simplex in 
  $\sd^{nk}\Skel_2K_m$ is affine, and therefore has image contained in a 
  simplex of $\Skel_2 K_{m+n}$. The point $x$ is contained in every maximal simplex 
  of $U_n$, so for any $y \in U_n$ the points $i^m_{m+n}(x)$ and $i^m_{m+n}(y)$ 
  are contained in a common simplex of $\Skel_2K_m$. Therefore, by 
  \cref{cor:visualdiameter}, there is a constant $C$ such that 
  $\vdistance(i^m_\infty(x), i^m_\infty(y)) \leq Ca^{-(m+n)}$ for every $n \geq
  0$ and every $y \in U_n$.
\end{proof}

\begin{remark}
  In fact, if a 2-simplex of $K_m$ is metrically identified with the standard
  2-simplex then the restriction of $i^m_\infty$ to this simplex can be shown
  to be H\"{o}lder continuous.  We do not require this strengthening of
  \cref{prop:continuouslimit} here.
\end{remark}

We complete our description of the maps $i^n_\infty$ with the following lemma,
which can be thought of as saying that $i^n_\infty$ is an approximate section
of the map $p^\infty_n$.

\begin{lemma}\label{lem:convergestoidentity}
  There is a constant $C$ such that for any $m \geq n_0$, any point $\xi \in
  \boundary G$, and point $x \in \Skel_2 K_n$ contained in a simplex
  of $K_n$ containing $p^\infty_n(\xi)$, the visual distance from $\xi$ to
  $i^\infty_n(x)$ is at most $Ca^{-m}$.
\end{lemma}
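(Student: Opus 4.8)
The plan is to convert the asserted visual-metric estimate into a single lower bound on a Gromov product and then to assemble that bound from two pieces. Since the visual metric satisfies $\vdistance(\xi, i^n_\infty(x)) \leq k_2 a^{-\gromov{e}{\xi}{i^n_\infty(x)}}$, it suffices to produce a constant $C'$, independent of $n$, $\xi$ and $x$, with $\gromov{e}{\xi}{i^n_\infty(x)} \geq n - C'$; the conclusion then holds with $C = k_2 a^{C'}$ and visual-distance bound $C a^{-n}$. (Here I read the statement with a single index, so that $x \in \Skel_2 K_n$, $p^\infty_n$, $i^n_\infty$ and the exponent all refer to the same $n$.)

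To estimate this product I would split it through the point $x$ using the $\min$-inequality for the extended Gromov product, exactly as in the proof of \cref{lem:simplexproduct}: by \cite[Proposition III.H.1.22]{bridsonhaefliger99} there is $\delta'$ depending only on $\delta$ with $\gromov{e}{\xi}{i^n_\infty(x)} \geq \min\set{\gromov{e}{\xi}{x}, \gromov{e}{x}{i^n_\infty(x)}} - \delta'$, so it is enough to bound each inner product below by $n$ minus a constant. The second comes almost for free: \cref{lem:linearproduct} gives $\gromov{e}{i^n_m(x)}{x} \geq n - C$ for every $m \geq n$, and since $i^n_m(x) \to i^n_\infty(x)$, the characterisation of the boundary Gromov product in \cite[III.H.3.17]{bridsonhaefliger99} yields $\gromov{e}{x}{i^n_\infty(x)} \geq \liminf_m \gromov{e}{i^n_m(x)}{x} \geq n - C$.

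The product $\gromov{e}{\xi}{x}$ is where the hypothesis on $x$ enters, and I expect this to be the step requiring the most care. Writing $x^\star$ for a vertex of the minimal simplex containing $x$, we have $\gromov{e}{\xi}{x} = \gromov{e}{\xi}{x^\star}$ by definition. As $x$ and $p^\infty_n(\xi)$ lie in a common simplex of $K_n$, which has diameter at most $D$, the vertex $x^\star$ lies within distance $D$ of some vertex $g$ of the minimal simplex containing $p^\infty_n(\xi)$; by the construction of $p^\infty_n$ from a partition of unity subordinate to $\set{U_n(y)}_y$, this $g$ satisfies $\xi \in U_n(g)$, so there is a geodesic ray $\gamma$ with $\gamma(0) = e$, $\gamma(\infty) = \xi$ and $\dist(\gamma(n), g) \leq 2\delta + 1$. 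Hence $\dist(x^\star, \gamma(n)) \leq D + 2\delta + 1$, and since $\gamma(n)$ lies at distance $n$ from $e$ on a geodesic ray to $\xi$ one has $\gromov{e}{\xi}{\gamma(n)} \geq n - 2\delta$; as the Gromov product based at $e$ changes by at most the distance between its arguments, this gives $\gromov{e}{\xi}{x} \geq n - (D + 4\delta + 1)$.

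Feeding both bounds into the $\min$-inequality produces $\gromov{e}{\xi}{i^n_\infty(x)} \geq n - C'$ for a constant $C'$ depending only on $\delta$ and $D$, as required. The only genuinely new work is the estimate on $\gromov{e}{\xi}{x}$, which amounts to unwinding the definitions of $p^\infty_n$ and of the cover $\set{U_n(y)}_y$ while tracking the additive constants $D$ and $2\delta + 1$ coming from the common-simplex condition and the definition of the $U_n$ respectively. Everything else is an application of \cref{lem:linearproduct} or a standard property of the Gromov product, so beyond this bookkeeping I anticipate no real obstacle.
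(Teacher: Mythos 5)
Your proof is correct and follows essentially the same route as the paper's: both reduce the visual-metric bound to a lower bound on the Gromov product $\gromov{e}{\xi}{i^n_\infty(x)}$ and obtain it by combining \cref{lem:linearproduct} (passed to the limit) with the $\delta$-inequality for Gromov products. The only divergence is in how the hypothesis on $x$ enters: you bound $\gromov{e}{\xi}{x}$ directly by unwinding the cover $\set{U_n(y)}_{y}$ underlying $p^\infty_n$, whereas the paper first replaces $x$ by $\alpha(m)$ for a geodesic ray $\alpha$ to $\xi$ and absorbs the error via \cref{cor:visualdiameter}; both are valid and yield the same constants up to bookkeeping.
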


\begin{proof}
  Let $m$, $\xi$ and $x$ be as in the statement of the lemma.
  Let $\alpha$ be a geodesic ray with $\alpha(0) = e$ and $\alpha(\infty) =
  \xi$; then $\alpha(m)$ is a vertex of a simplex of $K_n$ containing
  $p^\infty_m(\xi)$. There is a path from $\alpha(m)$ to $x$ contained in the
  union of two 2-simplices of $K_n$, so by Lemma~\ref{cor:visualdiameter} it is
  sufficient to prove the lemma for $x = \alpha(m)$.

  Let $C$ be the constant appearing in Lemma~\ref{lem:linearproduct}.
  We have $\gromov{e}{\alpha(m)}{\alpha(n)} = m$ for $n\geq m$, and
  by~\cite[Proposition III.H.1.22]{bridsonhaefliger99} there is a constant
  $\delta'$ depending only on $\delta$ such that for $n\geq m$,
  \begin{align*}
    \mathrlap{\gromov{e}{\alpha(n)}{i^m_n(\alpha(m))}}\quad\quad &\\
          & \geq \min\{\gromov{e}{\alpha(n)}{\alpha(m)}, 
            \gromov{e}{\alpha(m)}{i^m_n(\alpha(m))}\} 
            - \delta' \\
          &\geq \min\{m, m - C\} - \delta' \\
          &=  m - C - \delta'.
  \end{align*}
  Therefore $\gromov{e}{\xi}{i^m_\infty(\alpha(m))} \geq m - C - \delta'$, 
  from which the claim follows.
\end{proof}

\subsection{Homotopies in $\boundary G$}

We now use the maps $i^n_\infty$ to construct homotopies in $\boundary G$. To
begin, we prove the following proposition, which constructs homotopies between
certain special maps.

\begin{proposition}\label{prop:basic_homotopies}
  Let $C$ be a positive number. Then there exist numbers $m_0$ and $\mu$
  with the following property. Let $n \geq m_0$ and let $\gamma_0$ and
  $\gamma_1$ be maps from $\boundary\Delta\to\boundary G$. Let $f_0$ and $f_1$
  be simplicial approximations to $p^\infty_n\composed \gamma_0$ and
  $p^\infty_n\composed\gamma_1$. Suppose that $\newdist(f_0(x), f_1(x))\leq C$
  for all $x \in \boundary \Delta$. Then there is a homotopy
  $h\from\boundary\Delta\times[0,1]\to\boundary G$ from $i^n_\infty\composed
  f_0$ to $i^n_\infty\composed f_1$ satisfying a bounded length condition: for
  any $x \in \boundary\Delta$,
  \begin{equation*}
    \Diam(h(\{x\} \times [0,1])) \leq \mu a^{-n},
  \end{equation*}
  where the diameter is taken with respect to the visual metric.
\end{proposition}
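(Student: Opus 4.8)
The plan is to build the homotopy $h$ simplex-by-simplex over $\boundary\Delta$, interpolating between $i^n_\infty\composed f_0$ and $i^n_\infty\composed f_1$ by factoring through a slightly coarser complex $K_{n'}$ for some $n' < n$ chosen uniformly in terms of $C$ and $\delta$. The starting observation is that the two simplicial maps $f_0,f_1\from\boundary\Delta\to K_n$ are $\newdist$-close (by hypothesis $\newdist(f_0(x),f_1(x))\le C$ at every point), so after composing with the coarsening map $p^n_{n'}$ they agree up to a simplex: applying Lemma~\ref{lem:psimplicial} and the fact that $p^n_{n'}$ contracts distances, I expect that for $n' = n - O(\log_a C)$ the images $p^n_{n'}\composed f_0(x)$ and $p^n_{n'}\composed f_0(x)$ lie in a common simplex of $K_{n'}$ for every $x \in \boundary\Delta$.

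First I would fix the coarsening constant $m_0$ and set $n' = n - m_0$, and record that the composite maps $p^n_{n'}\composed f_j$ are simplicial approximations to $p^\infty_{n'}\composed\gamma_j$. Next, I would define the homotopy on each cell of a fine subdivision of $\boundary\Delta$ using the affine structure: over a point $x$ where $p^n_{n'}f_0(x)$ and $p^n_{n'}f_1(x)$ share a simplex $\tau_x$ of $K_{n'}$, the straight-line (affine) path in $\tau_x$ from one to the other is well-defined, and I would push this path forward by $i^{n'}_\infty$ to get a path in $\boundary G$ from $i^{n'}_\infty p^n_{n'} f_0(x)$ to $i^{n'}_\infty p^n_{n'}f_1(x)$. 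The length estimate is then immediate from Corollary~\ref{cor:visualdiameter}: the entire image of the straight-line path lies in a single simplex $\tau_x$ of $\Skel_2 K_{n'}$, so $i^{n'}_\infty$ maps it into a set of visual diameter at most $C' a^{-n'} = C' a^{m_0} a^{-n}$, giving the required bound with $\mu = C' a^{m_0}$.

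The remaining work is to reconcile $i^{n'}_\infty \composed p^n_{n'}$ with $i^n_\infty$: the proposition asks for a homotopy ending at $i^n_\infty\composed f_j$, not at $i^{n'}_\infty\composed p^n_{n'}\composed f_j$. I would close this gap by a separate comparison step, showing that $i^n_\infty\composed f_j$ and $i^{n'}_\infty\composed p^n_{n'}\composed f_j$ are homotopic with the same kind of bounded-length estimate. This comparison follows from the telescoping structure of the maps $i^m_{m+1}$ together with property (2) of Proposition~\ref{prop:i_in_dimension_2} (each $i^m_{m+1}$ moves vertices by at most $2\delta+1$) and Lemma~\ref{lem:convergestoidentity}, which controls how far $i^m_\infty(x)$ sits from the boundary point it approximates. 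Concretely, both composites approximate the same boundary point up to a bounded Gromov product at level $n'$, so Lemma~\ref{lem:convergestoidentity} forces their images to be within visual distance $O(a^{-n'})$, and an affine interpolation inside the complex supplies the homotopy with the correct length control.

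\emph{The main obstacle} I anticipate is the bookkeeping needed to make the coarsening genuinely uniform: I must verify that the single constant $m_0$, depending only on $C$ (and $\delta$, $D$, $a$), suffices to force $p^n_{n'}f_0(x)$ and $p^n_{n'}f_1(x)$ into a common simplex simultaneously for all $x\in\boundary\Delta$ and all large $n$, and that the various approximation errors from Remark~\ref{rem:dist_vs_newdist}, the $2\delta+1$ vertex displacements, and the constant $C$ in Lemma~\ref{lem:linearproduct} accumulate only additively in the exponent rather than multiplicatively. The geometric content is light once the constants are pinned down; the care lies in choosing $m_0$ and $\mu$ before $n$, so that the length bound $\mu a^{-n}$ holds with constants independent of $n$.
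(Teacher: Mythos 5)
Your proposal has a genuine gap, and it is concentrated in the step you describe last: reconciling $i^n_\infty\composed f_j$ with $i^{n'}_\infty\composed p^n_{n'}\composed f_j$. You argue that the two maps are pointwise within visual distance $O(a^{-n'})$ and that ``an affine interpolation inside the complex supplies the homotopy.'' But these two maps take values in $\boundary G$, which has no affine structure, and visual closeness of two maps into $\boundary G$ does not by itself produce a homotopy with small tracks --- that is precisely the local simple connectedness the whole argument is trying to establish. If you try to realise the interpolation inside a complex instead, note that $i^{n'}_\infty = i^n_\infty\composed i^{n'}_n$, so your comparison reduces to homotoping $i^n_\infty\composed f_j$ to $i^n_\infty\composed(i^{n'}_n\composed p^n_{n'}\composed f_j)$, where the two maps into $\Skel_2 K_n$ are at bounded $\newdist$-distance: that is another instance of the proposition you are proving, so the argument is circular. (Interpolating at a deep finite stage $K_{n+k}$ also fails: two points of $G$ whose limits have Gromov product about $n'$ drift to distance roughly $2(n+k-n')$ at radius $n+k$, so the finite-stage approximants are not uniformly close.) A symptom of the problem is that your proposal never invokes $\S_M$ or $\ddag'_M$; the proposition cannot be proved without some two-dimensional filling input, since it is the engine behind Theorem~\ref{thm:ddagimpliesLCtwo}. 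There is also a secondary technical flaw in the main step: the straight-line path between $p^n_{n'}f_0(x)$ and $p^n_{n'}f_1(x)$ lives in the simplex spanned by up to four vertices of $K_{n'}$, hence generically in the interior of a $3$-simplex, where $i^{n'}_\infty$ (defined only on $\Skel_2 K_{n'}$) cannot be applied.

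For comparison, the paper's proof never changes level: it first arranges $f_0$ and $f_1$ to be simplicial on a common subdivision $J$ of $\boundary\Delta$, then uses $\ddag'_{C+D}$ to join $f_0(v)$ to $f_1(v)$ for each vertex $v$ of $J$ by a short edge path in $K_n$ that is a simplicial approximation to a path in $\boundary G$, then uses $\S_M$ to fill each resulting square of $\boundary\Delta\times[0,1]$ by an affine map into $\Skel_2 K_n$, and only then composes with $i^n_\infty$; the track bound follows from Corollary~\ref{cor:visualdiameter} because each track crosses a bounded number of $2$-simplices of $K_n$. If you want to salvage your coarsening idea, you would still need $\ddag'$ and $\S$ to carry out the level-changing comparison, at which point the level change buys nothing.
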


\begin{proof}
  Let $G$ satisfy $\ddag'_{C+D}$ with constants $L$ and $m_0 \geq n_0$.
  Let $M$ be such that $3\cdot2^M \geq 2^L+2$, so that $\sd^M\boundary\Delta$
  has at least $2^L+2$ edges.  By increasing $m_0$ we can also assume that $G$
  satisfies $\S_M$ with constants $L'$ and $m_0$.

  Let $n \geq m_0$ and let $f_0$ and $f_1$ be as in the statement of the
  proposition, so there exist $k_0$ and $k_1$ such that $f_0$ is simplicial on
  $\sd^{k_0}\boundary\Delta$ and $f_1$ is simplicial on
  $\sd^{k_1}\boundary\Delta$. We first show that we may assume that $k_0$ and
  $k_1$ are equal, at the cost of increasing the distance between $f_0$ and
  $f_1$ slightly. If they are not already equal, assume $k_0 < k_1$.

  Let $f_0'$ be the simplicial map on $\sd^{k_1}$ defined as follows. For each
  edge $I$ of $\sd^{k_0}\boundary\Delta$, choose an orientation of $I$ and
  define $f_0'\restricted{\sd^{k_1-k_0}I}$ to be given by $f_0\restricted{I}$
  on the initial edge of $\sd^{k_1-k_0}I$ and constant equal to the image under
  $f_0$ of the terminal vertex of $I$ on the rest of $\sd^{k_1-k_0}I$. Then
  there is a homotopy from $f_0$ to $f_0'$ such that for any $x \in
  \boundary\Delta$, the path traced by $x$ under the homotopy is contained in a
  single edge of $K_n$.  It follows that, when composed with $i^n_\infty$, this
  homotopy gives a homotopy from $i^n_\infty\composed f_0$ to $i^n_\infty
  \composed f_0'$ such that the path traced by any point in $\boundary\Delta$
  has diameter at most some multiple of $a^{-n}$ as given by
  Corollary~\ref{cor:visualdiameter}.  Notice also that $f_0'$ is still a
  simplicial approximation to $p^\infty_n\composed\gamma_0$.  Therefore we may
  replace $f_0$ by $f_0'$ and assume that $f_0$ and $f_1$ are simplicial maps
  on a common subdivision $J$ of $\boundary\Delta$, where now $\newdist(f_0(x),
  f_1(x))\leq C+D$ for any $x \in \boundary\Delta$.
  
  Then we have a map 
  \begin{equation*}
    f_0 \disjointunion f_1 \from J \times \{0\} \disjointunion J \times \{1\}
          \to K_n,
  \end{equation*}
  the domain of which is a simplicial complex that we think of as a subspace of
  the topological space $\boundary\Delta\times[0,1]$. This map is a
  simplicial approximation to the map $\gamma_0 \disjointunion \gamma_1$ from
  $J\times\{0\} \disjointunion J \times\{1\}$ to $\boundary G$. 
  
  Using $\ddag'_{C+D}$ we can extend $\gamma_0\disjointunion\gamma_1$ to a
  simplicial map
  \begin{equation*}
    J\times\{0\} \union(\Skel_0 J \times \sd^L [0,1]) \union J\times\{1\} \to K_n,
  \end{equation*}
  again thinking of the domain of this map as a subspace of
  $\boundary\Delta\times[0,1]$ in the same way. Each component of the
  complement of this subspace in $\boundary\Delta\times[0,1]$ is a $2$-cell
  whose boundary is an edge path of length $2^L+2$. Identifying this edge path
  with a copy of $\sd^M\boundary\Delta$ with some edges collapsed to a point,
  we can use the condition $\S_M$ to extend the map to the full $2$-simplex
  $\Delta$ in such a way that it is affine on $\sd^{L'}\Delta$, and use this
  extension to extend the map to all of $\boundary\Delta\times[0,1]$.
  
  It remains to prove the bounded length assertion.  Let $N$ be the number of
  $2$-simplices in $\sd^M\Delta$.  Corollary~\ref{cor:visualdiameter} gives a
  constant $C_0$ so that any $2$-simplex is mapped by $i^n_\infty$ to a set of
  diameter at most $C_0 a^{-n}$, and so we can take $\mu = NC_0$.
\end{proof}

The following proposition extends the family of maps between which we can build
homotopies.

\begin{proposition}\label{prop:uniform_homotopies}
  There exists numbers $m_0$ and $\nu$ with the following property. Let
  $\gamma\from\boundary\Delta\to\boundary G$ be a continuous map. Let $n \geq
  m_0$ and let $f$ a simplicial approximation to $p^\infty_n\composed\gamma$.
  Then there is a homotopy $h$ from $\gamma$ to $i^n_\infty\composed f$ such
  that for any $x \in \boundary\Delta$,
  \begin{equation*}
    \Diam(h(\{x\}\times [0,1])) \leq \nu a^{-n}.
  \end{equation*}
\end{proposition}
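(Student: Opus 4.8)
The plan is to reduce this statement to the special case already established in Proposition~\ref{prop:basic_homotopies}. The difficulty with the present proposition, as compared to the basic homotopy result, is that here $\gamma$ is an arbitrary continuous map, not one of the special maps of the form $i^n_\infty \composed f'$ between which we already know how to build controlled homotopies. The strategy is therefore to sandwich $\gamma$ between such a special map and $i^n_\infty \composed f$ using the approximate-section property, and then invoke Proposition~\ref{prop:basic_homotopies} to fill in the homotopy between the two special maps.

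First I would fix a large auxiliary level $N \geq n$, to be chosen later, and let $f_N$ be a simplicial approximation to $p^\infty_N \composed \gamma$. Applying Lemma~\ref{lem:convergestoidentity} at level $N$, the map $i^N_\infty \composed f_N$ is visually within $Ca^{-N}$ of $\gamma$ at every point of $\boundary\Delta$: indeed, for each $x$, the point $f_N(x)$ lies in a simplex of $K_N$ containing $p^\infty_N(\gamma(x))$, so $i^N_\infty(f_N(x))$ is within $Ca^{-N}$ of $\gamma(x)$. Linearly interpolating along visual geodesics (or simply using that the two points are uniformly close) produces a homotopy from $\gamma$ to $i^N_\infty \composed f_N$ in which the track of each point has diameter at most a fixed multiple of $a^{-N}$; choosing $N$ large makes this contribution negligible.

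Next I would relate $i^N_\infty \composed f_N$ back down to level $n$. Composing with the maps $i^n_N$, the map $i^n_N \composed f$ is a simplicial approximation to $p^\infty_N \composed \gamma$ after enough subdivision, and by Lemma~\ref{lem:simplexproduct} together with the bounded-displacement property (2) of Proposition~\ref{prop:i_in_dimension_2}, the two simplicial approximations $f_N$ and $i^n_N \composed f$ to $p^\infty_N \composed \gamma$ satisfy $\newdist(f_N(x), (i^n_N\composed f)(x)) \leq C'$ for a constant $C'$ independent of $N$ and $n$. I would then apply Proposition~\ref{prop:basic_homotopies} at level $N$ to these two maps, obtaining a homotopy from $i^N_\infty \composed f_N$ to $i^N_\infty \composed i^n_N \composed f = i^n_\infty \composed f$, each track of which has diameter at most $\mu a^{-N}$.

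Concatenating the two homotopies yields a homotopy from $\gamma$ to $i^n_\infty \composed f$. The track of each point in $\boundary\Delta$ has diameter bounded by a fixed multiple of $a^{-N}$ plus $\mu a^{-N}$; since $N \geq n$ this is at most $\nu a^{-n}$ for a suitable constant $\nu$, giving the bounded-length condition. The main obstacle I anticipate is bookkeeping the subdivisions: I must ensure that $i^n_N \composed f$ really is a legitimate simplicial approximation to $p^\infty_N\composed\gamma$ (using property (3)/the approximate-section property of the $i^n_{n+1}$) so that Proposition~\ref{prop:basic_homotopies} applies, and that the constant $C'$ controlling $\newdist(f_N, i^n_N\composed f)$ is genuinely uniform. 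Everything else is routine concatenation of uniformly short homotopies and absorption of constants into $\nu$.
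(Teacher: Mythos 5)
Your reduction to Proposition~\ref{prop:basic_homotopies} founders on its first step. You propose to homotope $\gamma$ to $i^N_\infty\composed f_N$ by ``linearly interpolating along visual geodesics'', or ``simply using that the two points are uniformly close''. But $\boundary G$ carries no linear or geodesic structure in which to interpolate, and in a general compact metric space two maps that are pointwise $\epsilon$-close need not be homotopic at all, let alone by a homotopy with short tracks: that implication is exactly a local contractibility statement, and local simple connectedness of $\boundary G$ is what the whole argument is trying to establish. No finite choice of $N$ makes this step available. The paper's proof is built around precisely this obstruction: it never homotopes $\gamma$ directly to anything. Instead it forms an \emph{infinite} concatenation of the homotopies $h_m$ from $i^{n+m+1}_\infty\composed f_{m+1}$ to $i^{n+m}_\infty\composed f_m$ supplied by Proposition~\ref{prop:basic_homotopies} at each adjacent pair of levels, parametrised on $[2^{-m-1},2^{-m}]$, declares $h(x,0)=\gamma(x)$, and then proves continuity at $t=0$ using the uniform convergence $i^{n+m}_\infty\composed f_m\to\gamma$ furnished by Lemma~\ref{lem:convergestoidentity}. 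The geometric series $\mu\sum_{m}a^{-m}$ then yields the track bound $\nu a^{-n}$. Your single-level shortcut removes exactly the device that makes the proposition true.

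There is also a quantitative error in your second step. You assert that $\newdist(f_N(x),(i^n_N\composed f)(x))\leq C'$ with $C'$ independent of $N$ and $n$. Both points lie within a bounded distance of the sphere $S_N$, but the available estimates (Lemmas~\ref{lem:linearproduct} and~\ref{lem:convergestoidentity}) only give $\gromov{e}{f_N(x)}{(i^n_N\composed f)(x)}$ at least $n$ minus a constant, not $N$ minus a constant; two points of $S_N$ with Gromov product about $n$ can be at distance about $2(N-n)$. Iterating Lemma~\ref{lem:name_this} across $N-n$ levels gives the same linear growth. Since the constants $m_0$ and $\mu$ in Proposition~\ref{prop:basic_homotopies} depend on the distance bound $C$, this would destroy uniformity for any choice of $N$ growing with $n$, while a bounded $N-n$ leaves the (unfixable) first step carrying all the weight. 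This is why the paper compares maps only at \emph{adjacent} levels, where Lemma~\ref{lem:name_this} gives a level-independent bound, and telescopes.
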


To prove this proposition, we use the following lemma.

\begin{lemma}\label{lem:name_this}
  There is a number $C$ such that, for any $\xi \in \boundary G$ and any $n
  \geq 0$ and any vertices $x_1$ and $x_2$ of the minimal simplices containing
  $i^n_{n+1}\composed p^\infty_n(\xi)$ and $p^\infty_{n+1}(\xi)$ respectively,
  \begin{equation*}
    \dist(x_1, x_2) \leq C.
  \end{equation*}
\end{lemma}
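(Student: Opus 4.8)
The plan is to bound $\dist(x_1,x_2)$ by comparing both points with the single vertex $i^n_{n+1}(x_0)$, where $x_0$ is a chosen vertex of the minimal simplex of $K_n$ containing $p^\infty_n(\xi)$. I would split the argument into two estimates: first, that $i^n_{n+1}(x_0)$ lies within a bounded distance of a vertex of the minimal simplex containing $p^\infty_{n+1}(\xi)$ (a purely geometric statement about geodesic rays); and second, that every vertex $x_1$ of the minimal simplex containing $i^n_{n+1}\composed p^\infty_n(\xi)$ lies within a bounded distance of $i^n_{n+1}(x_0)$ (a bookkeeping statement about the affine map $i^n_{n+1}$). Adding the two bounds by the triangle inequality gives the result. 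Throughout I assume, as holds in the intended applications, that $p^\infty_n(\xi)$ lies in $\Skel_2 K_n$, so that $i^n_{n+1}\composed p^\infty_n(\xi)$ is defined.

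For the first estimate, recall that $x_0$ being a vertex of the minimal simplex containing $p^\infty_n(\xi)$ means $\xi \in U_n(x_0)$, so there is a geodesic ray $\gamma$ with $\gamma(0)=e$, $\gamma(\infty)=\xi$ and $\dist(\gamma(n),x_0)\leq 2\delta+1$. By the construction of $i^n_{n+1}$ in Proposition~\ref{prop:i_in_dimension_1}, $i^n_{n+1}(x_0)=\gamma_{x_0}(n+1)$ for a geodesic ray $\gamma_{x_0}$ from $e$ with $\dist(x_0,\gamma_{x_0}(n))\leq 2\delta$; hence $\dist(\gamma(n),\gamma_{x_0}(n))\leq 4\delta+1$, and the triangle inequality applied at parameters $n$ and $n+1$ gives $\dist(\gamma(n+1),\gamma_{x_0}(n+1))\leq 4\delta+3$. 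Similarly a vertex $x_2$ of the minimal simplex containing $p^\infty_{n+1}(\xi)$ comes with a geodesic ray $\beta$ from $e$ to $\xi$ with $\dist(\beta(n+1),x_2)\leq 2\delta+1$. Since $\gamma$ and $\beta$ share the endpoint $\xi$, \cref{lem:boundary_gromov_product} gives $\gromov{e}{\gamma(n+1)}{\beta(n+1)}\geq (n+1)-3\delta$ and hence $\dist(\gamma(n+1),\beta(n+1))\leq 6\delta$. Combining these three bounds yields $\dist(i^n_{n+1}(x_0),x_2)\leq 12\delta+4$.

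For the second estimate, let $\sigma$ be the minimal simplex of $K_n$ containing $p^\infty_n(\xi)$; it has dimension at most $2$. Since $i^n_{n+1}$ is affine on $\sd^L\sigma$ by Proposition~\ref{prop:i_in_dimension_2}, the image $i^n_{n+1}\composed p^\infty_n(\xi)$ lies in the simplex spanned by the images of the vertices of the unique simplex of $\sd^L\sigma$ containing $p^\infty_n(\xi)$, so any vertex $x_1$ of its minimal simplex is of the form $i^n_{n+1}(w)$ for a subdivision vertex $w$ of $\sd^L\sigma$. Now $\sd^L\sigma$ consists of at most some number $N_0$ of simplices --- a bound depending only on $L$ since $\dim\sigma\leq 2$ --- each mapped by $i^n_{n+1}$ into a single simplex of $K_{n+1}$, and adjacent pieces share the image of a common vertex; hence $i^n_{n+1}(\sigma)$ is a connected union of at most $N_0$ simplices and so has word-metric diameter at most $N_0 D$. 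As both $x_1$ and $i^n_{n+1}(x_0)$ are vertices of $i^n_{n+1}(\sigma)$, this gives $\dist(x_1,i^n_{n+1}(x_0))\leq N_0 D$, and we may take $C = 12\delta + 4 + N_0 D$.

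The geometric heart of the argument is the first estimate, but it is straightforward once one unwinds the definitions; the step requiring the most care is the second, where one must track how the affine map $i^n_{n+1}$ behaves on the barycentric subdivision of $\sigma$ and use $\dim\sigma\leq 2$ to keep the number of pieces --- and hence the diameter of $i^n_{n+1}(\sigma)$ --- bounded independently of $n$ and $\xi$. I would regard this bookkeeping, together with checking that $p^\infty_n(\xi)$ indeed lies in the domain $\Skel_2 K_n$, as the only real obstacle; no hyperbolic geometry is needed beyond \cref{lem:boundary_gromov_product} and the explicit description of $i^n_{n+1}$ on vertices.
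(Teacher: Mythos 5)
Your proof is correct and takes essentially the same approach as the paper: both arguments pass through $i^n_{n+1}$ of a vertex of the minimal simplex containing $p^\infty_n(\xi)$, combining the bookkeeping bound of $D$ times the number of simplices of $\sd^L\Delta$ for the affine map on the subdivision with a comparison of that image vertex to $x_2$. The only cosmetic difference is that your first estimate re-derives inline, via geodesic rays and Lemma~\ref{lem:boundary_gromov_product}, what the paper obtains by citing Lemma~\ref{lem:close_projections} together with the bound $\dist(y, i^n_{n+1}(y)) \leq 2\delta+1$.
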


\begin{proof}
  By \cref{lem:close_projections} the distances $\newdist(p^\infty_n(\xi),
  p^\infty_{n+1}(\xi))$ are bounded by $6\delta + 3 + 2D$. Let $y$ be a vertex
  of a minimal simplex containing $p^\infty_n(\xi)$.
  
  Furthermore, $\dist(y, i^n_{n+1}(y))$ is bounded by $2\delta+1$, and
  $\newdist(i^n_{n+1}(y), i^n_{n+1}\composed p^\infty_n (\xi))$ is bounded by
  the product of $D$ and the the number of 2-simplices in $\sd^L\Delta$, where
  $L$ is the number appearing in Proposition~\ref{prop:i_in_dimension_2}.  By
  putting all this together we obtain a bound $C$ on
  $\newdist(i^n_{n+1}\composed p^\infty_n(\xi), p^\infty_{n+1}(\xi))$, and then
  $\dist(x_1, x_2) \leq C + 2D$.
\end{proof}

\begin{proof}[Proof of Proposition~\ref{prop:uniform_homotopies}]
  Apply Proposition~\ref{prop:basic_homotopies} with $C$ as in
  Lemma~\ref{lem:name_this} to obtain numbers $m_0$ and $\mu$.
  
  Let $\gamma\from\boundary\Delta\to\boundary G$, $n\geq m_0$ and
  $f\from\boundary\Delta\to K_n$ be as in the proposition. Let $f_0 = f$,
  and for $m > 0$ let $f_m\from\boundary\Delta\to K_{n+m}$ be a
  simplicial approximation to $p^\infty_{n+m}\composed\gamma$. For $m \geq 0$
  let $\gamma_m = i^{m+n}_\infty\composed f_m$. 

  For each $m$ we can apply Proposition~\ref{prop:basic_homotopies} to the maps
  $f_{m+1}$ and $i^{n+m}_{n+m+1}\composed f_m$. To see this, firstly
  Lemma~\ref{lem:name_this} gives the uniformly bounded distance assumption.
  Secondly, $f_{m+1}$ is a simplicial approximation to
  $p^\infty_{n+m+1}\composed\gamma$. Finally,
  $i^{n+m}_{n+m+1}\restricted{\Skel_1 K_{n+m}}$ is a simplicial approximation
  to a map $p^\infty_{n+m+1}\composed j_m$ for some map $j_m\from\Skel_1
  K_{n+m}\to \boundary G$, and then $i^{n+m}_{n+m+1}\composed f_m$ is a
  simplicial approximation to $p^\infty_{n+m+1}\composed j_m\composed f_m$.
  
  Therefore, for each $m$, \cref{prop:basic_homotopies} gives a homotopy $h_m$
  from $i^{n+m+1}_\infty\composed f_{m+1}$ to $i^{n+m}_\infty \composed f_m$
  satisfying a uniform continuity condition. Define an infinite concatenation
  of these homotopies:
  \begin{equation*}
    h(x,t) = 
    \begin{cases}
      \gamma(x) & \text{if } t = 0\\
      h_m\left(x,2^{m+1}\left(t-\frac{1}{2^{m+1}}\right)\right) 
              & \text{if $t \in \left[\frac{1}{2^{m+1}}, \frac{1}{2^m}\right]$
                    , $m\geq 0$}.
    \end{cases}
  \end{equation*}
  This function is clearly continuous except possibly on
  $\boundary\Delta\times\{0\}$. We now show that it is continuous there, too.

  Let $x \in \boundary\Delta$ and let $\epsilon > 0$. By
  Lemma~\ref{lem:convergestoidentity} there exists $M$ such that for $m \geq M$
  and $y \in \boundary \Delta$, $\rho(\gamma(y), \gamma_m(y)) \leq \epsilon/3$.
  Increasing $M$ if necessary we may assume that $\mu a^{-M-n} \leq
  \epsilon/3$.  Finally, let $U$ be a neighbourhood of $x$ such that
  $\rho(\gamma(x), \gamma(y)) \leq \epsilon/3$ for $y \in U$. Now, for any $y
  \in U$ and $t \in (0, 2^{-M}]$,
  \begin{equation*}
    \rho(h(x,0), h(y,t)) \leq \frac{\epsilon}{3} + \frac{\epsilon}{3} +
        \frac{\epsilon}{3} = \epsilon.
  \end{equation*}
  Therefore $h$ is continuous.

  To prove the final claim, we immediately see that for any $x \in
  \boundary\Delta$,
  \begin{equation*}
    \Diam(h(\{x\}\times (0,1])) 
            \leq \mu \left(\sum_{m\geq 0} a^{-m}\right)a^{-n},
  \end{equation*}
  and then $\Diam(h(\{x\}\times [0,1]))$ satisfies the same inequality by
  continuity of $h$.
\end{proof}

\Cref{thm:ddagimpliesLCtwo} follows from this proposition:

\ddagimpliesLCtwo*

\begin{proof}
  To begin, let $\nu$ and $m_0 \geq n_0$ be as in
  \cref{prop:uniform_homotopies}. Let $r_0 =
  k_1a^{-m_0}$, so that, by \cref{prop:projectstoasimplex},
  $p^\infty_{m_0}\composed\gamma$ has image contained in the star of a single
  vertex of $K_{m_0}$ for any map $\gamma\from\boundary\Delta\to\boundary
  G$ with $\Diam\image(\gamma)\leq r_0$.

  Let $\gamma\from\boundary\Delta\to\boundary G$ be such a map. Let 
  \begin{equation*}
    m = \floor{\log_a(k_1/\Diam\image\gamma)} \geq m_0.
  \end{equation*}
  Then a constant map $\boundary\Delta \to K_m$ is a simplicial approximation
  to $p^\infty_m\composed\gamma$; applying \cref{prop:uniform_homotopies} to
  this map immediately gives a homotopy $h$ from $\gamma$ to the constant map.

  Furthermore,
  \begin{align*}
    \Diam\image h &\leq \Diam\image\gamma + \sup_{x \in\boundary\Delta}
                                    \Diam(h(\{x\} \times [0,1])) \\
                  &\leq \Diam\image\gamma + a^{-m} \nu \\
                  &\leq (1 + a\nu/k_1) \Diam\image\gamma \qedhere
  \end{align*}
\end{proof}

\section{Conclusion}\label{sec:conclusion}

The results of Sections~\ref{sec:double_dagger}
and~\ref{sec:local_simple_connectedness} together imply one direction of
Theorem~\ref{thm:main_theorem}; we now complete the proof of this theorem.

\begin{proof}[Proof of Theorem~\ref{thm:main_theorem}]
  If $G$ is one-ended then $\ddag'_M$ is satisfied for any $M$ by
  Proposition~\ref{prop:sufficient_for_ddag_p}. Furthermore, if $\boundary G -
  \set\xi$ is simply connected for any $\xi \in \boundary G$ then the condition
  $\S_M$ is satisfied for any $M$ by
  Proposition~\ref{prop:sufficient_condition}. It follows that $\boundary G$ is
  locally simply connected by Theorem~\ref{thm:ddagimpliesLCtwo}.

  For the converse, suppose that $\boundary G$ is connected and locally simply
  connected. Let $\xi$ be a point in $\boundary G$ and let $\gamma\from
  S^1\to\boundary G - \set\xi$ be a continuous map. 

  The point $\xi$ is a conical limit point for the action of $G$ on $\boundary
  G$, so there exist points $\zeta_-$ and $\zeta_+$ in $\boundary G$ and a
  sequence $(g_n)_{n = 1}^\infty$ of elements of $G$ such that:
  \begin{enumerate}
    \item the sequence $g_n\cdot\xi$ converges to $\zeta_-$ as $n$ tends to
      infinity, and
    \item the sequence $(g_n)$, considered as continuous maps from $\boundary
      G$ to $\boundary G$, converges uniformly on compact subsets of $\boundary
      G - \set\xi$ to the constant map with image $\zeta_+$.
  \end{enumerate}
  See~\cite{bowditch99c} for further explanation of this property.

  Let $U$ be a neighbourhood of $\zeta_+$ such that the closure of $U$ does not
  contain $\zeta_-$ and let $V$ be a neighbourhood of $\zeta_+$ contained in
  $U$ such that any map from the circle to $V$ is null-homotopic in $U$. Then by
  the properties of the sequence $(g_n)$ described above, there exists an
  element $g_n$ of the sequence such that $g_n\cdot\xi \notin U$ and
  $g_n\composed\gamma$ has image in $V$. Then $g_n\composed\gamma$ admits a
  contracting homotopy with image in $U$, and the image of this homotopy under
  $g_n^{-1}$ is a contracting homotopy of $\gamma$ with image contained in
  $\boundary G - \set\xi$.
\end{proof}

If $G$ is a group such that $\boundary G - \set\xi$ is disconnected for some
point $\xi$ in $\boundary G$, arguments due to Bowditch~\cite{bowditch98b} and
Swarup~\cite{swarup96} build an action of $G$ on a tree-like space to show that
$\boundary G$ is disconnected. It follows that $\boundary G$ is locally path
connected if and only if either $G$ is one- or two-ended. No such argument is
available in the higher dimensional case, but on the other hand no known
example of a boundary of a hyperblic group contradicts the corresponding
assertion. Therefore the following question is natural.

\begin{question} 
  Is there a one-ended hyperbolic group $G$ such that $\boundary G$ is simply
  connected but not locally simply connected? 
\end{question}

Note that the converse question has a positive answer if virtually Fuchsian
groups (which have boundary homeomorphic to a circle) are excepted:

\begin{proposition}\label{prop:converse_theorem}
  If $G$ is a one-ended hyperbolic group such that $\boundary G$ is locally
  simply connected then $\boundary G$ is either simply connected or
  homeomorphic to a circle.
\end{proposition}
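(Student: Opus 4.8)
The plan is to derive the proposition from Theorem~\ref{thm:main_theorem} together with the classical topological characterisation of the circle. Write $X = \boundary G$. Since $G$ is one-ended, $X$ is a nondegenerate compact connected metric space; by the theorem of Bestvina--Mess combined with that of Bowditch and Swarup it is locally path connected, and by the latter theorem it has no cut point. Thus $X$ is a nondegenerate Peano continuum no single point of which separates it. Because $\boundary G$ is assumed locally simply connected, Theorem~\ref{thm:main_theorem} moreover guarantees that $X \setminus \set\xi$ is simply connected for every $\xi \in X$, and the whole argument rests on exploiting this property one point at a time.

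The key step is a Seifert--van Kampen argument yielding a separation dichotomy. Fix distinct points $a, b \in X$ and put $A = X \setminus \set{a}$ and $B = X \setminus \set{b}$, which are open. As complements of (non-cut) points in a connected, locally path connected space they are path connected, and by the previous paragraph each is simply connected. Since $a \neq b$ we have $A \union B = X$ and $A \intersection B = X \setminus \set{a,b}$, which is nonempty as $X$ is infinite. Suppose that for some such pair $A \intersection B$ is connected; being an open subset of a locally path connected space it is then path connected, so van Kampen's theorem applies with basepoint in $A \intersection B$ and gives
\begin{equation*}
  \pi_1(X) \isom \pi_1(A) *_{\pi_1(A \intersection B)} \pi_1(B) \isom 1 *_{\pi_1(A \intersection B)} 1 \isom 1,
\end{equation*}
so that $X$ is simply connected. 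Contrapositively, if $X$ is not simply connected then $X \setminus \set{a,b}$ is disconnected for every pair of distinct points $a, b$; that is, every pair of points of $X$ separates $X$.

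To finish, if $\boundary G$ is simply connected there is nothing to prove, so assume it is not. By the previous step every pair of points of $X$ separates $X$, while no single point does, and $X$ is a nondegenerate Peano continuum. The classical characterisation of the simple closed curve---a nondegenerate, compact, connected, locally connected metric space is homeomorphic to $S^1$ if and only if it is separated by every pair of its points but by no single point---now shows $\boundary G \isom S^1$. I expect the only real care to lie in the point-set bookkeeping rather than in any homotopy-theoretic depth: one must check that deleting a closed point, or a pair of points, from the Peano continuum $X$ leaves a path-connected set whenever it leaves a connected one (which follows from local path connectedness), and one must invoke the correct separation characterisation of $S^1$. The conceptual core is simply the observation that single-point complements are simply connected, so a two-point complement can only fail to be connected, and it is exactly this failure that produces the circle.
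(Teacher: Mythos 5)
Your proposal is correct and follows essentially the same route as the paper: deduce from Theorem~\ref{thm:main_theorem} that all single-point complements are simply connected, apply Seifert--van Kampen to the complements of two distinct points to conclude that either $\boundary G$ is simply connected or every point pair separates, and then invoke the classical point-set characterisation of the circle. The extra bookkeeping you supply (path connectedness of the open pieces, the no-cut-point input from Bowditch and Swarup) is exactly what the paper leaves implicit.
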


\begin{proof}
  By Theorem~\ref{thm:main_theorem}, all point complements in $\boundary G$ are
  simply connected. Let $p$ and $q$ be distinct points in $\boundary G$. If
  $\boundary G - \set{p,q}$ is connected then $\boundary G$ is simply connected
  by the Seifert--van Kampen theorem. It follows that either $\boundary G$ is
  simply connected, or the complement of any two points in $\boundary G$ is
  disconnected, in which case $\boundary G$ is a circle
  by a classical theorem in point set topology~\cite[II.2.13]{wilder49}.
\end{proof}

It is also natural to ask whether the theorem of this paper generalises to
higher dimensions. The difficulty is in generalising the proof of
Proposition~\ref{prop:sufficient_condition}: given a sequence of maps from $S^0$
to $\boundary G$ we pass to a uniformly convergent subsequence, but this is not
possible for higher dimensional spheres without controlling the modulus of
continuity of the maps.

\begin{question} Suppose that $\boundary G - \set\xi$ is $k$-connected for any
  point $\xi \in \boundary G$. Does it follow that $\boundary G$ is
  locally-$k$-connected?
\end{question}

\bibliography{references}
\end{document}